\documentclass[10pt]{article}

\usepackage{amssymb,amsmath, graphicx, amsthm,sectsty,stmaryrd,lmodern,titlesec,color,hyperref,mathpazo}
\usepackage[mathscr]{euscript}

\newcommand{\id}{\mathrm{id}}

\newcommand{\Vect}{\mathrm{\textbf{Vect}}}
\newcommand{\<}{\left\langle}
\renewcommand{\>}{\right\rangle}
\newcommand{\tensor}{\otimes}

\newcommand{\Aut}{\mathrm{Aut}}

\renewcommand{\Bar}{\overline}
\renewcommand{\Tilde}{\widetilde}

\renewcommand{\Hat}{\widehat}

\def\bfC{{\bf C}}
\def\bfR{{\bf R}}
\def\bfZ{{\bf Z}}

\newcommand{\bbZ}{\mathbb{Z}}

\newcommand{\scr}{\mathscr}

\newcommand{\ul}{\underline}

\newcommand{\dd}{{\rm d}}
\newcommand\dbar{\Bar{\partial}}
\newcommand\zbar{\Bar{z}}

\def\be{\begin{equation}}
\def\ee{\end{equation}}
\def\bearray{\begin{eqnarray}}
\def\eearray{\end{eqnarray}}
\def\bestar{\begin{eqnarray*}}
\def\eestar{\end{eqnarray*}}
\def\ben{\begin{displaymath}}
\def\een{\end{displaymath}}
\theoremstyle{plain}
\newtheorem{theorem}{Theorem}[section]
\newtheorem{lemma}[theorem]{Lemma}

\newtheorem{prop}[theorem]{Proposition}

\theoremstyle{definition}
\newtheorem{defn}{Definition}[section]

\newtheorem{example}{Example}[section]
\theoremstyle{remark}
\newtheorem*{remark}{Remark}

\usepackage[margin=1in]{geometry}
\geometry{letterpaper}

\usepackage[all,cmtip]{xy}
\newcommand{\xym}{\xymatrix@1@=14pt@M=2pt}

\title{The Virasoro vertex algebra and factorization algebras on
  Riemann surfaces}
\author{Brian Williams \\ Northwestern University}
\begin{document}
\maketitle

\def\Vir{{\rm Vir}}
\def\Vect{{\rm Vect}}
\def\sF{\scr{F}}
\def\sL{\scr{L}}
\def\sE{\scr{E}}
\def\sO{\scr{O}}
\def\VVert{\mathbb{V}{\rm ert}}
\def\brian{\textcolor{blue}{BW: }\textcolor{blue}}
\def\dgNuc{{\rm dgNuc}}
\def\d{{\rm d}}
\def\sVir{\scr{V}{\rm ir}}
\def\oloc{\scr{O}_{\rm loc}}
\def\GL{{\rm GL}}
\def\fg{\mathfrak{g}}
\def\fh{\mathfrak{h}}

\section{Introduction}
In this paper we study the sheaf of holomorphic vector
fields in one complex dimension and local extensions
thereof. Using the formalism of factorization algebras developed in
the book \cite{CG1} we provide a construction of the {\em Virasoro
  factorization algebra} defined on any Riemann surface. Moreover, we
compute and recognize the factorization homology of the two
dimensional factorization
algebra as encoding the conformal blocks of the Virasoro vertex
algebra. 

The Virasoro Lie algebra $\Vir$ arises as a central extension of the
Lie algebra of vector fields on a circle $\Vect(S^1)$. In fact, it is the unique
central extension as ${\rm H}^2(\Vect(S^1))$ is one-dimensional with generator given by the Gelfand-Fuks
cocycle \cite{GF} defined by
\ben
\omega_{\rm GF}(f(t) \partial_t, g(t) \partial_t) \mapsto
\frac{1}{12} \int_{S^1} f'''(t) g(t)\dd t .
\een
The Virasoro Lie algebra, along with its related vertex algebra and
category of representations, are
interesting and natural in their own right from a mathematical point
of view \cite{GF, FF, KW, KI}. 

The compelling motivation for studying of the Virasoro algebra derived from understanding the symmetries of two-dimensional
conformal field theories. Classically, conformal symmetry consists of two copies of the
complexification of the Lie algebra of vector fields on the circle: a
holomorphic and an anti-holomorphic version. We will choose to focus
on holomorphic, or chiral,
conformal field theories and hence only consider holomorphic vector
fields on two-dimensional complex manifolds. The Weyl anomaly arises
when one tries to quantize the symmetry of holomorphic vector fields
on such a conformal field theory. It results in the one-dimensional central
extension of holomorphic vector fields defined by the Gelfand-Fuks
cocycle. Moreover, the anomaly is characterized by how the
central parameter acts on the quantum theory; this is called the
{\em central charge} of the theory.

We work with the Dolbeault resolution of holomorphic vector fields on
$\bfC$, which we denote by $\scr{L}^\bfC$ throughout. The fact that we can restrict vector
fields to open sets gives this the structure of a sheaf of Lie
algebras. Moreover, it has the structure of a {\it local Lie
  algebra} on $\bfC$ which will be central in our construction.  We define an explicit cocycle $\omega$ that defines a $(-1)$-shifted
central extension of this local Lie algebra. There is a factorization
algebra associated to this local Lie
algebra, denoted $\sVir$. We show that the factorization
product encodes the product on the universal enveloping algebra
associated to the ordinary Virasoro Lie algebra, $U(\Vir)$. 

We go further and use a
bcharacterization of structured holomorphic factorization algebras on
$\bfC$ from the book \cite{CG1} to show that this factorization algebra has the structure of a
vertex algebra and it is equivalent to that of the Virasoro
vertex algebra. In \cite{CG1}, a functor $\VVert$ from the
category of structured holomorphic factorization algebras on $\bfC$
to the category of vertex algebras is defined. The main result from the first
part of this paper can be stated as
follows. 

\begin{theorem} 
For any complex number $c \in
  \bfC$ there is a factorization algebra $\sVir_{c}$ on
  $\bfC$ (given
  by the enveloping factorization algebra for the extension of
  $\scr{L}^\bfC$ by the cocycle $c \omega$) which determines a vertex
  algebra $\mathbb{V}{\rm ert}(\sVir_{c})$. Moreover, there is an isomorphism of vertex algebras
\ben\xym{
{\bf Vir}_c \ar[r]^-{\cong} & \mathbb{V}{\rm ert}(\sVir_{c}) }
\een
where ${\bf Vir}_c$ denotes the Virasoro vertex algebra of charge
$c$. 
\end{theorem}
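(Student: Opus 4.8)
The plan is to verify that $\sVir_c$ satisfies the hypotheses under which the functor $\VVert$ of \cite{CG1} is defined, to compute the underlying vector space of the resulting vertex algebra, and then to identify the full vertex algebra structure by recognizing a single generating field and invoking the reconstruction theorem for vertex algebras. Throughout, the computation will run parallel to the Kac--Moody calculation of \cite{CG1}, with the affine Lie algebra replaced by $\Vir$ and the abelian coefficient data replaced by the non-abelian Lie algebra of holomorphic vector fields. First I would check that $\sVir_c$ is a holomorphically translation-invariant factorization algebra on $\bfC$ that is $S^1$-equivariant and satisfies the smoothness and weight-discreteness conditions required by $\VVert$. Translation invariance and the $S^1$-action are inherited from the corresponding actions on $\scr{L}^\bfC$ together with the invariance of the cocycle $c\omega$, while smoothness and the discreteness of the weight spaces follow from the Dolbeault presentation of $\scr{L}^\bfC$, exactly as in the Kac--Moody setting.

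Next I would compute the underlying graded vector space. By definition of $\VVert$ this is the weight-graded cohomology $H^\ast(\sVir_c(D))$ of the twisted enveloping factorization algebra on a disk $D$. Since $\scr{L}^\bfC(D) = \Omega^{0,\bullet}(D, T^{1,0})$ is a resolution of the holomorphic vector fields $\sO^{\rm hol}(D)\,\partial_z$ concentrated in degree zero, a spectral sequence argument reduces the twisted Chevalley--Eilenberg complex to the Lie algebra chains of $\sO^{\rm hol}(D)\,\partial_z$ twisted by $c\omega$. Passing to Laurent modes and identifying the vector field $z^{n+1}\partial_z$ with the Virasoro generator $L_n$ (up to the standard sign convention), I would identify $H^\ast(\sVir_c(D))$ with the vacuum Verma module $V_c = U(\Vir)\otimes_{U(\Vir_{\geq -1}\oplus \bfC K)}\bfC_c$, where $K$ is the central element acting by $c$. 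Under this identification the factorization unit corresponds to the vacuum vector $|0\rangle$ and the holomorphic translation $\partial_z$ corresponds to $L_{-1}$, so the underlying space agrees with that of ${\bf Vir}_c$.

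It then remains to match the vertex algebra operations. The Virasoro vertex algebra is strongly generated by the single field attached to the conformal vector $L_{-2}|0\rangle$, so by the reconstruction theorem it suffices to produce the corresponding state in $H^\ast(\sVir_c(D))$, compute its image under the state--field correspondence defined by the binary factorization product for two disjoint disks inside a larger disk expanded in the relative coordinate $z-w$, and verify that it satisfies the Virasoro operator product expansion $L(z)L(w) \sim \tfrac{c/2}{(z-w)^4} + \tfrac{2L(w)}{(z-w)^2} + \tfrac{\partial_w L(w)}{z-w}$. The regular, first-order, and second-order terms encode the bracket $[L_m,L_n] = (m-n)L_{m+n}$ of vector fields and follow from the fact, established earlier in the paper, that the factorization product reproduces the product in $U(\Vir)$.

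\textbf{The main obstacle} is the fourth-order pole. One must show that the explicit local cocycle $c\omega$, after being fed through the factorization product and evaluated against the propagator for $\dbar$ by a residue pairing, produces exactly the central term $\tfrac{c/2}{(z-w)^4}$, equivalently the Gelfand--Fuks coefficient $\tfrac{c}{12}(m^3 - m)\delta_{m+n,0}$. This is the analytic heart of the argument, and it is precisely here that the normalization of $\omega$ must be pinned down so that the cubic-minus-linear combination $m^3 - m$ (rather than $m^3$) emerges with the correct factor. Once this computation is in place, the reconstruction theorem upgrades the matching of the generating field and its OPE to the asserted isomorphism of vertex algebras ${\bf Vir}_c \cong \VVert(\sVir_c)$.
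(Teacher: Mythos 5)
Your proposal follows the same architecture as the paper's proof: verify the hypotheses of the Costello--Gwilliam theorem to obtain $\VVert(\sVir_c)$, identify the underlying space with the vacuum module, and reduce the matching of vertex structures to the reconstruction theorem plus the computation of the central term, which you correctly single out as the analytic heart (the paper's annulus computation produces exactly the coefficient $\tfrac{m^3-m}{12}\delta_{n,-m}$). However, there is a genuine error in your computation of the underlying vector space. The factorization envelope assigns to a disk $D$ the Chevalley--Eilenberg chains of the \emph{compactly supported} dg Lie algebra $\scr{L}^\bfC_c(D) = \Omega^{0,*}_c(D,TD)$, not of the full Dolbeault complex $\Omega^{0,*}(D,TD)$. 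The compactly supported complex is \emph{not} a resolution of $\sO^{hol}(D)\partial_z$: by Serre's theorem (quoted in the paper) its cohomology is concentrated in degree $1$ and equals the continuous dual $\left(\Omega^1_{\rm hol}(D,TD)\right)^\vee$. If you ran your spectral sequence on the Lie algebra you name, namely $\sO^{hol}(D)\partial_z$ (the span of the modes $L_n$ with $n \geq -1$), you would be computing the Lie homology of the positive-mode subalgebra, which is not the vacuum Verma module; the modes $L_{-2}, L_{-3}, \ldots$ whose PBW monomials span ${\bf Vir}_c$ do not even lie in $\sO^{hol}(D)\partial_z$. The vacuum module appears precisely because of the duality: the $S^1$-weight spaces of $\left(\Omega^1_{\rm hol}(D,TD)\right)^\vee$ are identified with $z^{-1}\bfC[z^{-1}]\partial_z$, i.e.\ with the negative modes.

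Even once the state space is corrected, ``passing to Laurent modes'' does not by itself produce the isomorphism with ${\bf Vir}_c$; three things require proof, and the paper supplies all three. First, the $U(\Vir)$-module structure on $V$ comes from the annular factorization products of Proposition \ref{phi}, fed through the structure map $\sVir(D(0,\epsilon)) \tensor \sVir(A(r,R)) \to \sVir(D(0,R))$. Second, one must check that the would-be vacuum $1 \in V$ is annihilated by $L_n$ for $n \geq -1$, so that the map descends from $U(\Vir)$ to the induced module; the paper does this by exhibiting an explicit $\dbar$-primitive, $L_n(A) = \dbar\left(h(z,\Bar{z})\, z^{n+1}\partial_z\right)$, once $L_n(A)$ is viewed inside $\scr{L}^\bfC_c(D(0,R))$. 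Third, bijectivity is not formal: the paper compares the PBW filtration on $U(\Vir)$ with the symmetric-power filtration on $\sVir$ and checks that the map of associated gradeds is the map ${\rm Sym}(\bfC[z,z^{-1}]\partial_z)[C] \to {\rm Sym}(z^{-1}\bfC[z^{-1}]\partial_z)[C]$ induced by $\bfC[z,z^{-1}] \to z^{-1}\bfC[z^{-1}]$, which is visibly surjective with the expected kernel. Your proposal elides these steps, and the error identified above means the input to them would be the wrong space; the rest of your outline (the OPE verification and the fourth-order pole computation) is sound and agrees with the paper's strategy.
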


After spelling out the local structure, we study global
sections, or the factorization homology, of $\sVir_{c}$. Some care must be taken when defining the cocycle determining
the extension on the Dolbeault resolution of vector fields on a
general Riemann surface since the original cocycle for the local Lie
algebra on $\bfC$ is coordinate dependent. We show that a slightly
modified version of the cocycle gives a coordinate independent
description and hence a {\it universal} version of the cocycle. That
is, we show that the Virasoro factorization algebra defines a
factorization algebra on the site of Riemann surfaces. We calculate
the cohomology of global sections of the Virasoro factorization
algebra and write down correlation functions. 

This paper can be viewed in conjunction with a new direction of work
that combines methods of renormalization, homological perturbation
theory, and factorization algebras developed in Costello \cite{C1}
and Costello-Gwilliam \cite{CG1, CG2}. From the data of a classical
field theory, defined in terms of an action functional, one applies of
homotopical renormalization to construct a quantization. Locality of
the theory and the quantization on the manifold in which the theory
lives combine to give the structure of a {\em factorization algebra}
on the algebraic observables of the theory. 

The last section of this paper exhibits how the usual physical idea
of the Virasoro algebra encoding the symmetries of a conformal field
theory fits in to the model for QFT developed by
Costello-Gwilliam. The usual Virasoro
symmetry in field theory is naturally encoded by map of factorization algebras
from the Virasoro factorization algebra (at a certain central charge)
to the factorization algebra of observables. We will focus on a
particular example of a chiral conformal field theory, called the {\em
  free $\beta\gamma$ system}, though the methods we use work in a much
larger context. 

\subsection{Notation and conventions}
\begin{itemize}

\item If $X$ is a complex manifold we have a decomposition of the tangent bundle $T^{1,0} X \oplus T^{0,1}X$. Unless otherwise noted we
will write $TX = T^{1,0} X$ for the $(1,0)$ part of the tangent
bundle. With respect to this decomposition the de Rham differential 
\ben
\dd_{\rm dR} : \scr{O}(X) \to \Omega^1(X) = \Omega^{1,0}(X) \oplus
\Omega^{0,1}(X) := \Gamma\left((T^{1,0}X)^\vee\right) \oplus
\Gamma\left((T^{0,1}X)^\vee\right)
\een
splits as $\partial + \dbar$. 

\item Let $V$ be a graded vector space. We denote by ${\rm Tens}(V)$ the full
tensor algebra of $V$, $\tensor_{n \geq 0} V$. This is again a graded
vector space in the natural way. Define the symmetric
algebra as 
\ben
{\rm Sym}(V) = \bigoplus_{n \geq 0} {\rm Sym}^n(V)
\een
where ${\rm Sym}^n(V) = \left(V \tensor \cdots \tensor
  V\right)_{\Sigma_n}$. We will also need the completed symmetric
algebra
\ben 
\Hat{\rm Sym}(V) = \prod_{n \geq 0} {\rm Sym}^n(V) . 
\een

\item All graded vector spaces are cohomologically graded. For $k \in \bfZ$ we denote by $V[k]$ the graded vector space with
graded components:
\ben
(V[k])^i = V^{i+k} .
\een
If $W$ is an ordinary (ungraded) vector space, we will understand it
as a graded vector space concentrated in degree zero. For instance,
$W[k]$ is concentrated in degree $-k$. 

\item Let $\mathfrak{g}$ be a dg Lie algebra. That is, a
  $\mathbf{Z}$-graded vector space together with a differential
  $\dd_{\mathfrak{g}} : \mathfrak{g}^\bullet \to \mathfrak{g}^{\bullet + 1}$ of degree +1 and a
  bracket $[-,-]$ that is graded antisymmetric, satisfies the graded
  Jacobi identity, and for which $\dd_{\mathfrak{g}}$ is a graded derivation. We
  define Chevalley-Eilenberg chains for computing Lie algebra homology as
\ben
C_*(\mathfrak{g}) \; := \; {\rm Sym} \left(\mathfrak{g}[1]\right) =
\bigoplus_{n \geq 0} {\rm Sym}^n(\mathfrak{g}[1])
\een
with differential given by $\dd = \dd_{\mathfrak{g}} + \dd_{\rm
  CE}$ where $\dd_{\rm CE}$ is the usual CE-differential determined by
$\dd_{\rm CE} (a \wedge b) = [a, b]$ on ${\rm Sym}^2$. Similarly, Chevalley-Eilenberg cochains for computing Lie
algebra cohomology are defined by
\ben
C^*(\mathfrak{g}) \; := \; \Hat{\rm Sym}
\left(\mathfrak{g}^\vee[-1]\right)
\een
with differential given by $\dd = \dd_{\mathfrak{g}}^\vee + \dd_{\rm
  CE}^\vee$. 

\item We will need to consider a topology on the dg vector spaces we
  work with. Unless otherwise noted our complexes will take values in the
  category of dg nuclear vector spaces. This is an especially
  convenient category of topological vector spaces which are locally
  convex and Hausdorff. For more on their properties see \cite{C1} Given two dg nuclear vector
  spaces we denote by $V \tensor W$ the completed tensor product. This
  tensor product
  makes the category of dg nuclear vector spaces a symmetric monoidal
  category which we denote by ${\rm dgNuc}^{\tensor}$. 

\end{itemize}

\subsection{Acknowledgements}
I'd like to thank Owen Gwilliam and Ryan Grady for their shared interest in this
project and generosity in providing detailed comments and
discussion pertaining to this work. I have also benifited from useful
comments from and discussions with Chris Elliott, Ben
Knudsen, and Philsang Yoo. Finally, I'd like to thank Kevin Costello for
sparking my interest in this project.

\section{Virasoro as a local Lie algebra on $\bfC$}

In this section we introduce a local version of the Virasoro Lie
algebra on the complex plane. It appears as an extension of the Lie
algebra of holomorphic vector fields on $\bfC$ given by an explicit
cocycle. 

\subsection{Dolbeault resolution of holomorphic vector fields}

Let $X$ be a complex manifold. We study the space of
holomorphic sections of the holomorphic $(1,0)$ tangent bundle
$\scr{O}^{hol}(TX)$. We can use
the decomposition of the tangent bundle above gives us a resolution
for this space. Indeed, the $\dbar$ operator extends to define a complex
\ben
\xym{
\Omega^{0,0}(X,TX) \ar[r]^-{\dbar} & \Omega^{0,1}(X,TX)
\ar[r]^-{\dbar} & \Omega^{0,2}(X,TX) \ar[r]^-{\dbar} & \cdots .
}
\een

We will be concerned with the case that the complex manifold is a
Riemann surface $\Sigma$. Indeed the Dolbeaut complex above defines the
dg Lie algebra $\scr{L}^\Sigma := \Omega^{0,*}(\Sigma,T\Sigma)$. The differential is $\dbar$ and
the Lie bracket is given by extending the ordinary Lie bracket on
$\Omega^{0,0}(\Sigma,T\Sigma)$ to a graded Lie bracket. 

We will consider $\scr{L}^\Sigma$ as a sheaf of cochain
complexes that assigns to an open $U
\subset \Sigma$ the complex
\ben
(\Omega^{0,*}(U,TU) , \Bar{\partial}) .
\een
Moreover, $\scr{L}^\Sigma$ is a sheaf
of dg Lie algebras. In fact it has even more structure, that of a
{\it local dg Lie algebra}. 

The following definition can be found in \cite{CG1}.

\begin{defn}
A {\it local dg Lie algebra} on a manifold $M$ is the following data:
\begin{itemize}
\item[(1)] A graded vector bundle $L$ on $M$, whose sheaf of smooth
  sections is denoted $\scr{L}$.
\item[(2)] A differential operator $\dd : \scr{L} \to \scr{L}$ of
  degree one and square $0$.
\item[(3)] Antisymmetric multi-differential operators
\ben
\dd : \scr{L} \to \scr{L} \;\; , \;\; [-,-] : \scr{L}^{\tensor 2} \to \scr{L}
\een
of degree one and zero respectively, that give $\scr{L}$ the structure of a sheaf of $L_\infty$-algebras.
\end{itemize}
\end{defn}

Since $\dbar$ and the Lie bracket of vector fields are differential and
bi-differential operators, respectively we see that $\scr{L}^\Sigma$ is
a local $L_\infty$-algebra. 

We will also be interested in the compactly supported version of
$\scr{L}^\Sigma$; it assigns to an open the dg vector space
\ben
(\Omega_c^{0,*}(U,TU), \Bar{\partial})
\een
Analogously, this is a precosheaf of dg Lie algebras which we denote
by $\scr{L}^\Sigma_c$. 

\subsection{Lie algebra extensions and the cocycle}

We are interested in a one-dimensional central extension of
$\scr{L}^\Sigma$. As the Lie algebra in question is local, we ask
for our extensions to be local as well. Before defining what we mean
by this, we review extensions of ordinary Lie and dg Lie algebras. 

In the remainder of this section, as well Sections \ref{secann} and
\ref{secvert} we will be concerned with the case that the Riemann
surface is the complex line $\Sigma = \bfC$.

\subsubsection{Extensions}
A central extension $\Hat{\mathfrak{g}}$ of an ordinary Lie algebra
$\mathfrak{g}$ is a Lie algebra that fits into an exact sequence
\ben
0 \to \bfC \to \Hat{\mathfrak{g}} \to \mathfrak{g}\to 0
\een
such that $[\lambda, x] = 0$ for all $\lambda \in \bfC$ and $x \in
\mathfrak{g}$. Isomorphism classes of central extensions of
$\mathfrak{g}$ are in bijective correspondence with ${\rm H}^2(\mathfrak{g})$. 

For a dg Lie algebra $\mathfrak{g}$ and an integer $k$ we can define a
$k$-shifted central
extension of $\mathfrak{g}$. It fits into and exact
sequence
\be\label{ext1}
0 \to \bfC[k] \to \Hat{\mathfrak{g}} \to \mathfrak{g}\to 0
\ee
and satisfies $[\lambda,x] = 0$ as above. 

\begin{remark} The group ${\rm
  H}^{2+k}(\mathfrak{g})$ does {\it not} parametrize such
extensions. It parametrizes a larger class of extensions, namely
shifted $L_\infty$-extensions of $\mathfrak{g}$. That is, exact
sequences as in (\ref{ext1}) except $\Hat{\mathfrak{g}}$ is allowed to
be an $L_\infty$-algebra, and the maps are of $L_\infty$-algebras. 
\end{remark}

\begin{example} Consider the Lie algebra of vector fields on $S^1$,
  ${\rm Vect}(S^1)$. This is an ordinary Lie
  algebra that, as usual, can be thought of as a dg Lie algebra concetrated in degree zero. The
  Gelfand-Fuks extension mentioned in the introduction is a 2-cocycle, hence determines an element
  in ${\rm H}^2_{\rm Lie}({\rm Vect}(S^1))$. In fact, this cohomology is one
  dimensional. See \cite{GF} for a proof of this. 
\end{example}

Now, let $\scr{L}$ be a local dg Lie algebra on a manifold $M$. A {\it local}
$k$-{\it shifted central extension} of $\scr{L}$ is a dg Lie algebra
structure on the precosheaf
\ben
\Hat{\scr{L}}_c = \scr{L}_c \oplus \ul{\bfC}[k]
\een
such that for all opens $U \subset M$:
\begin{itemize}
\item (Central) For any $\lambda \in \bfC[k]$ and $x \in \Hat{\scr{L}}_c(U)$ we
  have $[x,\lambda]=0$ and the sequence
\ben
0 \to \bfC[k] \to \Hat{\scr{L}}_c(U) \to \scr{L}_c(U) \to 0
\een
is exact. 
\item (Local) The differential $\dd_{\Hat{\scr{L}}} : \scr{L}_c(U) \to \bfC[k]$ and Lie
  bracket $[-,-] : \scr{L}_c(U) \tensor \scr{L}_c(U) \to \bfC[k]$ both
  factor through the $k$-shifted integration map
\ben
\int_U : {\rm Dens}_c(U) [-k] \to \bfC[k] .
\een
Here, ${\rm Dens}_c$ denotes the cosheaf of compactly supported densities. 
\end{itemize}

\begin{remark} As in the ordinary case, there is a cohomology that parametrizes
central extensions of this local nature. Let $\scr{L}$ be a local Lie
algebra on $M$. In \cite{C1} local functionals are defined as 
\ben
C^*_{\rm loc,red}(\scr{L}) := {\rm Dens}_M \tensor_{\scr{D}_M} C^*_{\rm
  red}\left({\rm Jet}(L)\right) .
\een
Here, $\scr{D}_M$ is the space of differential operators on $M$ and
${\rm Jet}(L)$ is the infinite Jet-bundle of the vector bundle
$L$. The jet-bundle inherits a natural $\scr{D}_M$-module structure,
and this induces one on cochains. There is another interpretation of
local cochains. They are precisely the graded multilinear functionals
on $\scr{L}$ that factor through the integration map. More precisely,
integration along $M$ induces a natural inclusion
\ben
C^*_{\rm loc,red}(\scr{L}) \hookrightarrow C^*_{\rm red}(\scr{L}_c(M))
\een 
that sends a local functional $S$ to the functional $\varphi \mapsto
\int_M S(\varphi)$. 

Just as in the case of (non-local) dg Lie algebras, the degree $2+k$
cocycles of $C^*_{\rm loc,red}(\scr{L})$ parametrize a larger class of
extensions, namely local $L_\infty$-algebra extensions of
$\scr{L}$. For our situation, when $\scr{L}$ is the Dolbeault
resolution of holomorphic vector fields on $U \subset \bfC$ (or on a
closed Riemann surface $\Sigma$) the non-trivial degree one cocycles
are all cohomologous to one of the form
\ben
\scr{L}_c(U)^{\tensor 2} \to \bfC
\een
and hence all (-1)-shifted extensions will be equivalent to a local dg Lie
algebra. 
\end{remark}

\subsubsection{A cocycle for $\scr{L}^\bfC$} \label{sec shiftext}

We now define the cocycle used to construct the central extension of
$\scr{L}^\bfC$ we are interested in. Let $U
\subset \bfC$ and fix a coordinate. Consider the bilinear map
\ben
\omega : \scr{L}^\bfC_c(U) \tensor \scr{L}^\bfC_c(U) \to \bfC
\een
given by
\ben
(\alpha \tensor \partial_z, \beta \tensor \partial_z) \mapsto \frac{1}{2\pi}\frac{1}{12}\int_U \left(
  \partial_z^3 \alpha_0 \beta_1 +
  \partial_z^3\alpha_1 \beta_0
\right) \d^2 z 
\een
where $\alpha = \alpha_0+\alpha_1\dd \Bar{z}$ and $\beta = \beta_0
  +\beta_1 \dd \Bar{z}$. One checks by direct calculation that this is
  a cocycle and is our analog of the Gelfand-Fuchs cocycle.

This cocycle defines for us a local $(-1)$-shifted central extension
$\Hat{\scr{L}}^\bfC$ of $\scr{L}^\bfC$ via the local extension construction above. As a
cosheaf of vector spaces it is
\ben
\scr{L}^\bfC_c \oplus \bfC \cdot c [-1] .
\een
On an open $U$, the Lie bracket is defined by the rules
\ben
[\alpha \tensor \partial_z, \beta \tensor \partial_z]_{\Hat{\scr{L}}^\bfC_c} \; := \;
[\alpha \tensor \partial_z, \beta \tensor \partial_z] + \frac{1}{2\pi}\frac{1}{12}\int_U \left(
  \partial_z^3 \alpha_0 \beta_1 +
  (\partial_z^3\alpha_1 \beta_0
\right) \d^2 z  \cdot c
\een
and $[\alpha \tensor \partial_z, c]_{\Hat{\scr{L}}^\bfC_c} = 0$. 

The locality and cocycle properties imply that $\omega$ determines an
  element in ${\rm H}^1_{\rm loc}(\scr{L}^\bfC)$. 

\subsection{Statements about cohomology}
The following facts about the $\dbar$-cohomology of subsets of $\bfC$
will be used throughout. Let $U \subset \bfC$ be open. The following lemma is due to Serre \cite{Serre}. 

\begin{lemma} The compactly supported Dolbeault cohomology of $U$ is
  concentrated in degree 1 and there is a continuous isomorphism
\ben
{\rm H}^1(\Omega_c^{0,*}(U),\Bar{\partial}) \; \cong \;
\left(\Omega^1_{\rm hol}(U) \right)^\vee .
\een
\end{lemma}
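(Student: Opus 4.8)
The plan is to establish this as a version of Serre duality for the compactly supported Dolbeault complex on an open set $U \subset \bfC$. Since $U$ is one-dimensional, the Dolbeault complex $(\Omega_c^{0,*}(U), \dbar)$ has only two terms, $\Omega_c^{0,0}(U) \xrightarrow{\dbar} \Omega_c^{0,1}(U)$, so the cohomology is automatically concentrated in degrees $0$ and $1$. First I would dispose of degree $0$: an element of ${\rm H}^0$ is a compactly supported holomorphic function on $U$, and by analytic continuation (or the maximum principle) such a function must vanish, so ${\rm H}^0 = 0$. This reduces the problem to identifying the cokernel of $\dbar$ on $1$-forms.

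Next I would set up the duality pairing. Given a compactly supported $(0,1)$-form $\eta \in \Omega_c^{0,1}(U)$ and a holomorphic $1$-form $\phi \in \Omega^1_{\rm hol}(U)$, the wedge product $\phi \wedge \eta$ is a compactly supported smooth $(1,1)$-form, i.e.\ a compactly supported density, so integration gives a pairing
\ben
\Omega_c^{0,1}(U) \tensor \Omega^1_{\rm hol}(U) \to \bfC, \qquad (\eta, \phi) \mapsto \int_U \phi \wedge \eta.
\een
The key computation is that this pairing descends to ${\rm H}^1$: if $\eta = \dbar g$ for some $g \in \Omega_c^{0,0}(U)$, then $\phi \wedge \dbar g = \dbar(\phi \, g)$ since $\phi$ is $\dbar$-closed (holomorphic), and Stokes' theorem kills the integral of an exact compactly supported form. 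Thus I obtain a well-defined map ${\rm H}^1(\Omega_c^{0,*}(U), \dbar) \to (\Omega^1_{\rm hol}(U))^\vee$, where the dual is taken with respect to the natural topology on holomorphic $1$-forms (uniform convergence on compact sets).

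The heart of the argument, and the step I expect to be the main obstacle, is proving that this map is an isomorphism of topological vector spaces. Surjectivity and injectivity together amount to the statement that the pairing is a perfect (nondegenerate) pairing identifying ${\rm H}^1_c$ as the continuous dual of $\Omega^1_{\rm hol}(U)$. The cleanest route is to invoke Serre's duality theorem \cite{Serre} directly: for the Dolbeault complex on a complex manifold, compactly supported cohomology in degree $q$ is dual to ordinary cohomology in complementary degree, and on a Riemann surface this specializes to ${\rm H}^1_c(U, \sO) \cong ({\rm H}^0(U, \Omega^1_{\rm hol}))^\vee = (\Omega^1_{\rm hol}(U))^\vee$. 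The genuine technical content, which Serre's theory supplies, is the continuity of the isomorphism and the nondegeneracy of the pairing in the topological (nuclear) category; this requires that $\Omega^1_{\rm hol}(U)$ be reflexive and that the relevant spaces be well-behaved (Fr\'echet or DF), which holds because these are nuclear spaces. I would therefore cite \cite{Serre} for the perfectness and continuity rather than reprove the topological duality by hand, and confine the explicit work to verifying that the integration pairing above is the one realizing Serre's abstract isomorphism.
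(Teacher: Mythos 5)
Your proposal is correct and matches the paper's treatment: the paper gives no independent proof, simply citing Serre \cite{Serre} for the duality and writing down the same integration pairing $\beta \mapsto \int_U \alpha\,\beta$ that you construct. The extra details you supply (vanishing of ${\rm H}^0$ by analytic continuation, and the Stokes argument showing the pairing descends to cohomology) are correct routine verifications, while the genuine topological content — perfectness and continuity of the pairing — is delegated to Serre exactly as in the paper.
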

Here, $(-)^\vee$ denotes the continuous linear dual of nuclear
Fr\'{e}chet spaces. Explicitly, we assign to a $(0,1)$-form $\alpha$ on $U$, the continuous
linear functional 
\ben
\<\alpha,-\> : \Omega^1_{\rm hol} (U) \to \bfC \;\; , \;\; \beta \mapsto \int_U
\alpha \beta .
\een

Next, we need the following fact about dg Lie algebras. 

\begin{lemma} Suppose $L$ is a dg Lie algebra such that ${\rm H}^*(L)$
  is concentrated in a single degree. Then $L$ is formal (as a dg Lie
  algebra). 
\end{lemma}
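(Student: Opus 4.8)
The plan is to invoke the homotopy transfer theorem and reduce formality to a degree count on the transferred $L_\infty$-operations. Write $H = {\rm H}^*(L)$ and suppose it is concentrated in a single degree $k$. First I would choose a contraction of $L$ onto its cohomology: maps $i : H \to L$ and $p : L \to H$ together with a degree $-1$ homotopy $h$ satisfying $p i = \id_H$ and $\id_L - i p = \dd h + h \dd$, arranged so that the usual side conditions $h^2 = 0$, $p h = 0$, $h i = 0$ hold. Such a decomposition exists because we work over a field: one splits each graded piece into cocycles, a complement, and the image of the differential, and in the topological (nuclear) setting one checks the splitting can be chosen continuous.

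With this contraction in hand, the homotopy transfer theorem produces an $L_\infty$-algebra structure $\{\ell_n\}_{n \geq 1}$ on $H$ together with an $L_\infty$-quasi-isomorphism $L \simeq (H, \{\ell_n\})$ extending $i$. By construction $\ell_1 = 0$, since the differential on cohomology vanishes, and $\ell_2$ is the bracket induced by $[-,-]$, while each higher $\ell_n$ is a sum over decorated trees built from $i$, $p$, $h$, and the bracket of $L$.

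The key step is a degree argument. Each transferred operation $\ell_n$ has cohomological degree $2 - n$. Since $H$ is concentrated in degree $k$, a nonzero $\ell_n$ would send an input of degree $nk$ into the unique nonzero degree $k$, forcing $nk + (2 - n) = k$, that is $k = (n-2)/(n-1)$. For $n \geq 3$ the right-hand side lies strictly between $0$ and $1$ and is never an integer, so $\ell_n = 0$; for $n = 2$ the equation forces $k = 0$. Hence if $k \neq 0$ every operation $\ell_n$ with $n \geq 2$ vanishes and the transferred structure is abelian, while if $k = 0$ only $\ell_2$ survives and the transferred structure is the ordinary Lie algebra $(H, \ell_2)$. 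In either case the transferred $L_\infty$-structure is an honest dg Lie algebra, namely ${\rm H}^*(L)$ equipped with its induced bracket and zero differential.

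Finally, the $L_\infty$-quasi-isomorphism $L \simeq (H, \ell_2)$ exhibits $L$ as formal: an $L_\infty$-quasi-isomorphism between dg Lie algebras admits a homotopy inverse and can be rectified to a zig-zag of strict dg Lie algebra quasi-isomorphisms connecting $L$ to ${\rm H}^*(L)$ with its induced bracket. I expect the only genuine obstacle to be the verification that the contraction data can be chosen within the relevant nuclear category so that the transfer formulas converge and remain continuous; the degree bookkeeping that annihilates the higher brackets is elementary once the transfer is in place.
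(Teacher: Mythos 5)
Your proof is correct, but it takes a genuinely different route from the paper's. The paper argues by truncation: it defines the subcomplex $L' \subset L$ with $(L')^k = L^k$ for $k < m$, $(L')^m = \ker(\dd_L : L^m \to L^{m+1})$, and $(L')^k = 0$ for $k > m$, and then asserts that the zig-zag $L \hookleftarrow L' \to {\rm H}^m(L)$ consists of quasi-isomorphisms of dg Lie algebras --- three lines, no $L_\infty$ machinery. What the paper's route buys: it only manipulates subobjects and quotients, so it passes painlessly to the topological (nuclear) setting, with no need for the continuous contraction your transfer argument requires. Note, however, that the paper's ``clearly'' hides the checks that $L'$ is closed under the bracket and that the projection onto ${\rm H}^m(L)$ is a Lie map (brackets of elements whose degrees sum to more than $m$ must vanish, and brackets of elements whose degrees sum to exactly $m$ must be exact); these hold in the paper's application, where the Dolbeault dg Lie algebra of a Riemann surface is concentrated in degrees $0$ and $1$, but they are not automatic for an arbitrary dg Lie algebra. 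What your route buys: the degree count on the transferred operations ($\ell_n$ has degree $2-n$, so a nonzero $\ell_n$ forces $k = (n-2)/(n-1)$) kills all higher brackets uniformly, cleanly separates the cases $k = 0$ and $k \neq 0$ (in the latter case even $\ell_2$ vanishes, so the minimal model is abelian), and imposes no hypotheses on which degrees $L$ itself occupies --- so your argument is the more robust one in full generality, at the cost of invoking homotopy transfer and rectification of $L_\infty$ quasi-isomorphisms between dg Lie algebras, plus the continuity issue for the contraction data that you correctly flag.
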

\begin{proof} 
Suppose the cohomology of $L$ is concentrated in degree $m$. Define
the subcomplex $L' \hookrightarrow L$ as follows: for $k < m$ set
$(L')^k := L^k$, for $k = 0$ set $(L')^0 = \ker(d_L : L^0 \to L^1)$, for
$k > m$ set $(L')^k := 0$. There is a natural zig-zag of dgla's
\ben
L \hookleftarrow L' \to {\rm H}^0L .
\een
Both arrows are clearly weak equivalences. 
\end{proof}

Serre's result implies that $\scr{L}^\bfC_c(U) = \Omega^{0,*}_c(U,TU)$ is formal
for all opens $U \subset \bfC$. In fact, there is a quasi-isomorphism
of dg Lie algebras 
\ben
\Omega^{0,*}_c(U,TU) \; \simeq \; {\rm H}(\Omega^{0,*}_c(U,TU), \dbar)
\; \cong \; \left(\Omega_{\rm hol}^1(U,TU) \right)^\vee .
\een

This implies the following useful fact about the Lie algebra
cohomology.

\begin{prop} Let $U \subset \bfC$ be open. Then,
\ben
{\rm H}^{\rm Lie}_*(\scr{L}_c^\bfC(U)) := {\rm H}^*\left({\rm Sym}(\scr{L}_c^{\bfC} (U)[1]), \dbar + \dd_{CE} \right) \; \cong \;
{\rm Sym}\left(\Omega^1_{\rm hol}(U,TU)^\vee\right)
\een
concentrated in degree $0$.
\end{prop}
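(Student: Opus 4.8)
The plan is to reduce the computation of Lie algebra homology of $\scr{L}_c^\bfC(U)$ to a purely algebraic statement about the homology of an abelian dg Lie algebra, using the formality result established just above. First I would invoke the quasi-isomorphism of dg Lie algebras
\ben
\scr{L}_c^\bfC(U) = \Omega^{0,*}_c(U,TU) \; \simeq \; \left(\Omega^1_{\rm hol}(U,TU)\right)^\vee,
\een
where the right-hand side is concentrated in a single cohomological degree and carries the zero differential. The key point is that Chevalley-Eilenberg chains $C_*({-}) = {\rm Sym}({-}[1])$ are a homotopy functor on dg Lie algebras, so a quasi-isomorphism of dg Lie algebras induces a quasi-isomorphism on ${\rm Sym}({-}[1])$ equipped with the total differential $\dbar + \dd_{CE}$. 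Hence I may replace $\scr{L}_c^\bfC(U)$ by its cohomology without changing the answer.

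The second step is to identify the bracket on the cohomology. Since $\scr{L}_c^\bfC(U)$ has cohomology concentrated in degree $1$ (by Serre's lemma applied to the Dolbeault complex valued in $TU$), and the Lie bracket has degree $0$, any bracket of two cohomology classes would have to land in degree $2$, where the cohomology vanishes. Therefore the induced bracket on ${\rm H}^*(\scr{L}_c^\bfC(U))$ is identically zero, so the cohomology is an \emph{abelian} dg Lie algebra. Consequently the internal CE-differential $\dd_{CE}$ vanishes on ${\rm Sym}$ of the cohomology, and the Chevalley-Eilenberg chains of this abelian Lie algebra reduce to the symmetric algebra with zero differential:
\ben
{\rm H}^{\rm Lie}_*\left(\scr{L}_c^\bfC(U)\right) \; \cong \; {\rm Sym}\left(\left(\Omega^1_{\rm hol}(U,TU)\right)^\vee[1][1]\right).
\een
Here the space $\left(\Omega^1_{\rm hol}(U,TU)\right)^\vee$ sits in degree $1$ (its dual of a degree $-1$ object), and the shift $[1]$ in the definition of CE-chains places it in degree $0$, so the entire symmetric algebra is concentrated in degree $0$, matching the claimed statement.

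The main obstacle I anticipate is bookkeeping at the level of the topological vector spaces: the quasi-isomorphism must be a quasi-isomorphism in $\dgNuc^{\tensor}$, and one must check that ${\rm Sym}$ interacts correctly with the completed tensor product so that the symmetric powers of the nuclear Fr\'echet dual are computed with the right completion. The degree shift conventions (two applications of $[1]$, together with the fact that dualizing a degree $-1$ space yields a degree $+1$ space) also require care to land in degree $0$ rather than some other degree. Once the formality and the vanishing of the induced bracket are in hand, however, these are routine verifications, and the concentration in degree $0$ follows immediately from the fact that ${\rm Sym}$ of a vector space in degree $0$ is again concentrated in degree $0$.
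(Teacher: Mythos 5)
Your proposal is correct and takes essentially the same route as the paper: formality of $\scr{L}_c^\bfC(U)$ together with homotopy invariance of Chevalley--Eilenberg chains reduces the computation to the cohomology, which is an abelian dg Lie algebra concentrated in degree $1$ (you make explicit the degree argument for vanishing of the induced bracket, which the paper merely asserts), whence the chains collapse to a symmetric algebra in degree $0$. The only blemish is notational: under the paper's conventions the shifts should read $\left(\Omega^1_{\rm hol}(U,TU)^\vee[-1]\right)[1]$ (the dual placed in degree $+1$ as ${\rm H}^1$, then shifted into degree $0$ by the CE convention) rather than $[1][1]$, but your accompanying prose makes clear you land in degree $0$ as required.
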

Here, we extend the differential $\dbar$ on $\scr{L}_c(U)$ to the
symmetric algebra in the obvious way. 
\begin{proof} 
This result follows from formality. Indeed,
\ben
{\rm H}_*^{\rm Lie}(\scr{L}_c^\bfC(U)) \; \cong \; {\rm H}_*^{\rm Lie}({\rm
  H}_{\dbar}^*(\scr{L}_c^\bfC(U)) \; = \; {\rm H}_*^{\rm Lie}(\Omega^1_{\rm
  hol}(U,TU)^\vee) .
\een
Now, $\Omega_{\rm hol}^1(U,TU)^\vee$ is an abelian dg Lie algebra
concentrated in a single degree. Thus 
\ben
{\rm H}^*_{\rm Lie}(\Omega^1_{\rm
  hol}(U,TU)^\vee) = {\rm Sym}\left(\Omega^1_{\rm
    hol}(U,TU)^\vee\right)
\een
as desired. 
\end{proof}

This result also follows from considering the filtration
spectral associated to symmetric tensor power degree. The $E_1$-page is ${\rm
  Sym}\left({\rm H}_{\dbar}^*(\scr{L}_c^\bfC(U))\right)$ and the
spectral sequence degenerates
at the $E_2$-page as the $\dbar$-cohomology is concentrated in a single
degree. In fact, the degeneration of this spectral sequence
  associated to cochains on a dg Lie algebra $\mathfrak{g}$ is closely related to the
  formality of $\mathfrak{g}$, for example see \cite{man}.

\begin{remark} In the second part of the paper we consider closed
  Riemann surfaces. It is still true that on a closed Riemann surface,
  the spectral sequence associated to the dg Lie algebra
  $\Omega^{0,*}(\Sigma,T\Sigma)$ degenerates. In fact, this dg Lie
  algebra is also formal. 
\end{remark}

\subsection{Factorization algebras}
Central to this work is the notion of a {\it factorization
  algebra}. We recall the relavent theory as in
\cite{CG1}. 

Fix a topological space $M$. For the level of generality of most of this section we
work in an arbitrary symmetric monoidal category $\scr{C}^\tensor$
closed under small colimits. For
the purposes of this work we are mainly concerned with $\scr{C} = {\rm
  dgNuc}$, the dg category of cochain complexes of nuclear vector spaces over
$\bfC$ with symmetric monoidal structure given by the completed tensor
product over $\bfC$. 

\subsubsection{Prefactorization} A {\it prefactorization algebra} $\sF$ on
$M$ with values in $\scr{C}^\tensor$ is an assignment of an object
$\sF(U)$ of
$\scr{C}$ for each open $U \subset M$
together with the following data:
\begin{itemize}
\item For $U \subset V$, a morphism $\sF(U) \to \sF(V)$. 
\item For any finite collection $\{U_i\}$ of pairwise disjoint opens
  in an open $V \subset M$ a morphism
\ben
\tensor_i \sF(U_i) \to \sF(V) .
\een
\item Coherences between the above two sets of data.
\end{itemize}

For a better definition we need to define the following symmetric monoidal category ${\rm
  Fact}(M)^{\sqcup}$. Its objects are topological spaces $U$ together with a
map $U \to M$ such that on each connected component of $U$ this map is
an open embedding. A morphism from $U \to M$ to $V \to M$ is a
commutative diagram
\ben
\xym{
U \ar[rr]^i \ar[dr] & & V \ar[dl] \\ 
& M & 
}
\een 
with $i$ an open embedding. Composition is done in the obvious way. The symmetric monoidal structure is given
by disjoint union. 

A more
precise definition of a {\it prefactorization algebra} is symmetric
monoidal functor 
\ben
\sF : {\rm Fact}(M)^{\sqcup} \to \scr{C}^\tensor .
\een

\begin{example} The coherence of the data above can be read of
  immediately from this definition and encodes the transitivity of
  opens. For instance, suppose $U_1, U_2 \subset V \subset W$ are
  opens with $U_i$ disjoint. Then $\sF$ applied to this
  composition says that 
\ben
\xym{
\sF (U_1) \tensor \sF(U_2) \ar[r] \ar[dr] & \sF(V) \ar[d]
\\ & \sF(W)
}
\een
commutes. 
\end{example}

The structures we consider in the first part of this paper are
completely encoded by a {\em pre}factorization structure. In the last section, however;
when we will be concerned with global sections on a general Riemann
surface, it is critical that our object satisfies a form of descent. 

\subsubsection{Factorization: gluing} A {\it factorization algebra} is
a prefactorization algebra satisfying a descent axiom. Descent for
ordinary sheaves (or cosheaves) says that one can recover the value of
the sheaf on large open sets by breaking it up into smaller
opens. That is, if $\scr{U} = \{U_i\}$ is a cover of $U \subset M$
then a presheaf $\sF$ of vector spaces is a sheaf iff 
\ben\xym{
\sF(U) \to \bigoplus_i \sF(U_i)
\ar@<0.5ex>[r]\ar@<-0.5ex>[r] & \oplus_{i,j} \sF(U_i \cap U_j) 
}
\een
is an equalizer diagram for all opens $U$ and covers $\scr{U}$. It is convenient to introduce the \v{C}ech
complex associated to $\scr{U}$. The $p$th space is
\ben
\check{C}^p(\scr{U},\sF) := \bigoplus_{i_0,\ldots,i_p}
\sF(U_{i_0} \cap \cdots \cap U_{i_p}) .
\een
The differential $\check{C}^p \to \check{C}^{p+1}$ is induced from the
natural inclusion maps $U_{i_0} \cap \cdots \cap U_{i_p}
\hookrightarrow U_{i_0} \cap \cdots \cap \Hat{U}_{i_j} \cap \cdots
\cap U_{i_p}$. The sheaf condition is equivalent to saying that the
natural map
\ben
\sF(U) \to {\rm H}^0(\check{C}(\scr{U},\sF)) 
\een
is an isomorphism. There is a similar construction for cosheaves, but
the arrow goes in the opposite direction. 

We are interested in descent for a different topology, that is, for
only a special class of open covers. Call an open cover $\scr{U} = \{U_i\}$ of
$U \subset M$ a {\it Weiss cover} if for any finite collection of points
$\{x_1,\ldots,x_k\}$ in $U$, there exists an open set $U_i$ such that
$\{x_1,\ldots,x_k\} \subset U_i$. This is equivalent to providing a
topology on the Ran space. 

A Weiss cover defines a Grothendieck topology on ${\rm
    Op}(M)$, the poset of opens in $M$. A {\it factorization algebra}
  on $M$
  is a prefactorization algebra on $M$ that is, in addition, a
  homotopy cosheaf for this Weiss topology. 

When $\scr{C}^\tensor = {\rm dgVect}$ we can be explicit about this
homotopy gluing condition using a variant of the \v{C}ech complex
above. Let $\sF$ be a cosheaf of dg vector spaces. For $\scr{U} = \{U_i\}_{i \in I}$ let
$\check{C}^p(\scr{U}, \sF)$ be the complex
\ben
\bigoplus_{i_0,\ldots,i_p} \sF(U_{i_1} \cap \cdots \cap U_{i_k})
[p-1] 
\een
with differential inherite from $\sF$. Then
$\check{C}(\scr{U},\sF)$ is a bigraded object. The differential is the total differential obtained from
combining the ordinary
\v{C}ech differentials above plus the internal differential of
$\sF$. The cosheaf
condition is that the natural map
\ben
\check{C}(\scr{U},\sF) \to \sF(U)
\een
is an equivalence for all Weiss covers $\scr{U}$ of $U$. 

\begin{remark} One might refer to this as a {\it homotopy}
  factorization algebra, reserving a {\it strict} factorization algebra for
  one in which 
\ben
\check{\rm H}^0(\scr{U},\sF) \to \sF(U)
\een
is an equivalence. The $\check{\rm H}^0$ means we have only taken
cohomology with respect to the \v{C}ech differentials. It has a
natural dg structure inherited from $\sF$. 
\end{remark} 

\subsection{(Twisted) Envelopes}
One of the most useful ways of constructing factorization algebras is
the ``factorization envelope'' of a local Lie algebra. This is the
analog of the unverisal enveloping algebra of a Lie algebra.

Let $\scr{L}$ be any local Lie algebra on a manifold $M$. Denote
by $\scr{L}_c$ its associated cosheaf of compactly supported
sections. Define
the prefactorization algebra $U^{\rm fact} \scr{L}$ as follows:
\begin{itemize}
\item For an open $U \subset M$ we assign the complex
  $C_*(\scr{L}_c(U))$ with it's usual differential
  $\dd = \dd_{\scr{L}} + \dd_{\rm CE}$. 
\item Suppose $\sqcup_i U_i \hookrightarrow V$ is an inclusion of
  disjoint opens inside a bigger open. The structure maps of the
  prefactorization algebra come from applying $C_*(-)$ to the
  structure maps of the cosheaf 
\ben 
\oplus_i
  \scr{L}_c(U_i) \to \scr{L}_c(V).
\een
\end{itemize}
In fact, we will use the following fact to compute global sections,
i.e. factorization homology. 

\begin{theorem}[\cite{CG1}] The prefactorization algebra $U^{\rm
    fact}\scr{L}$ satisfies descent, that is it is a
  factorization algebra. 
\end{theorem}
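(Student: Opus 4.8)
The plan is to verify the homotopy cosheaf (Weiss descent) condition for $U^{\rm fact}\scr{L}$ directly, reducing it to a descent statement for the underlying cosheaf $\scr{L}_c$ by exploiting the fact that $C_*(-) = {\rm Sym}(\scr{L}_c(-)[1])$ is built from a colimit-preserving functor applied to $\scr{L}_c$. The key structural observation is that $\scr{L}_c$ is already a cosheaf in a strong sense: compactly supported sections of a vector bundle form a flasque cosheaf, so for any open cover $\scr{U}$ of $U$ the \v{C}ech complex $\check{C}(\scr{U},\scr{L}_c) \to \scr{L}_c(U)$ is an equivalence. The nontrivial point is upgrading this from ordinary covers to Weiss covers and, more importantly, pushing it through the symmetric algebra functor.

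First I would recall the algebraic input: since the symmetric algebra on a direct sum decomposes multiplicatively, ${\rm Sym}(A \oplus B) \cong {\rm Sym}(A) \tensor {\rm Sym}(B)$, and more generally for a simplicial or \v{C}ech-type object the functor ${\rm Sym}$ does not commute with colimits on the nose. This is precisely why the Weiss topology (rather than the ordinary topology) is the correct one. I would explain that the Weiss condition—every finite subset of points lies in some cover element—is exactly what is needed so that the combinatorics of the symmetric powers ${\rm Sym}^n(\scr{L}_c(U)[1])$ are resolved by the cover: an $n$-fold product of sections is supported at $n$ points, and those $n$ points are captured by a single Weiss-open, so each symmetric power is separately a homotopy colimit over the cover.

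The central step is therefore to check descent power-by-power. For each fixed $n$, I would show that ${\rm Sym}^n(\scr{L}_c(-)[1])$ is itself a homotopy cosheaf for the Weiss topology. Here one uses that $\scr{L}_c^{\tensor n}$, as a cosheaf on $M^n$, is a homotopy cosheaf for the product topology, and then restricts attention to the Weiss cover of $U$ (which, on $M^n$, induces a genuine cover of the relevant diagonal neighborhoods after taking $\Sigma_n$-coinvariants). The compactly supported Dolbeault sections are fine sheaves admitting partitions of unity, which gives the acyclicity of the \v{C}ech complex at each tensor level; taking $\Sigma_n$-coinvariants is exact over a field of characteristic zero, so it preserves the equivalence. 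Assembling over all $n$—and being careful that the \v{C}ech differential and the internal differential $\dd_{\scr{L}} + \dd_{\rm CE}$ together give a single total complex—yields that $\check{C}(\scr{U}, U^{\rm fact}\scr{L}) \to U^{\rm fact}\scr{L}(U)$ is a quasi-isomorphism.

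The main obstacle, and the step requiring genuine care, is the interaction between the completed tensor product in $\dgNuc^{\tensor}$ and the (homotopy) colimits defining \v{C}ech descent. Over $\dgNuc$ the relevant \emph{completed} tensor product need not commute with the infinite colimits appearing in the \v{C}ech complex, so one cannot naively factor the argument through the purely algebraic identity ${\rm Sym}$-of-a-colimit. I would address this by invoking the nuclearity hypothesis: for nuclear Fréchet (and dual-nuclear) spaces the completed projective and injective tensor products agree and behave well with respect to the strict exact sequences arising from partitions of unity, so the \v{C}ech complex at each symmetric power level is still computing the correct homotopy colimit. The honest version of this argument—reconciling the topological completion with the homotopy-colimit bookkeeping—is exactly where the delicate functional-analytic estimates live; since the statement is attributed to \cite{CG1}, I would at this point either cite their verification of these nuclearity-compatibility facts or reproduce the key estimate showing the completed \v{C}ech complex remains acyclic.
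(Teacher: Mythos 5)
The paper itself offers no proof of this statement: it is imported verbatim from \cite{CG1}, so there is no in-paper argument to compare against. Your sketch is, in outline, the proof that appears in \cite{CG1}: filter $C_*(\scr{L}_c) = {\rm Sym}(\scr{L}_c[1])$ by symmetric power, reduce Weiss descent to descent for each ${\rm Sym}^n$, identify ${\rm Sym}^n(\scr{L}_c(U)[1])$ with $\Sigma_n$-coinvariants of compactly supported sections of $(L[1])^{\boxtimes n}$ on $U^n$, use partitions of unity to get \v{C}ech acyclicity there, and use exactness of coinvariants in characteristic zero; the functional-analytic compatibility of completed tensor products with these colimits is correctly flagged and legitimately deferred to \cite{CG1}. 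So the approach is right and essentially the standard one.

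Two points in your write-up deserve sharpening. First, the differential $\dd_{\rm CE}$ \emph{lowers} symmetric power degree by one, so ${\rm Sym}(\scr{L}_c[1])$ does not split as a direct sum of subcomplexes indexed by $n$; the "power-by-power" reduction must go through the increasing filtration ${\rm Sym}^{\leq m}$, whose associated graded carries only $\dd_{\scr{L}}$ plus the \v{C}ech differential, followed by the spectral sequence comparison (convergence is unproblematic since the filtration is exhaustive and bounded below). Your phrase about assembling the total complex gestures at this but does not isolate it. Second, the role of the Weiss condition is stated slightly backwards: for \emph{any} cover $\{U_i\}$ of $U$, the opens $\{U_i^n\}$ already cover a neighborhood of the diagonal in $U^n$; what can fail is coverage of points $(x_1,\ldots,x_n)$ whose coordinates lie far apart in different cover elements, and the Weiss condition is exactly what guarantees $\{U_i^n\}$ covers \emph{all} of $U^n$. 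The heuristic that a product of $n$ sections is "supported at $n$ points" should be replaced by this precise statement, since actual sections have compact (not pointlike) support and the partition-of-unity argument is run on the honest cover of $U^n$.
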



\begin{example} If $\mathfrak{g}$ is an {\it ordinary} Lie algebra we
  can consider the local Lie algebra $\Omega^*_{\bfR} \tensor
  \mathfrak{g}$ on $\bfR$. The factorization algebra $U^{\rm fact}
  (\Omega_\bfR \tensor \mathfrak{g})$ is locally constant on
  $\bfR$. Now, map that sends a factorization algebra on $\bfR$ to
  its value on an interval is known to induce an equivalence of categories
\ben
\{A_\infty{\rm -algebras}\} \simeq \{E_1{\rm -algebras}\} \simeq \{{\rm
  locally\;constant\;factorization\;algebras\;on\;} \bfR\} .
\een
Under this equivalence the factorization algebra $U^{\rm fact}
\mathfrak{g}$ corresponds to the associative algebra $U \mathfrak{g}$. 
\end{example}

Now, suppose we have an element $\omega \in {\rm H}_{\rm loc}^1(\scr{L})$ corresponding to a
$(-1)$-shifted central extension $\Hat{\scr{L}}$ of a local Lie
algebra $\scr{L}$ on a manifold $M$. We define the {\em
  twisted factorization envelope} $U_\omega^{\rm fact} \scr{L}$ as
the factorization algebra on $M$ that sends an open $U \subset M$ to the complex
\ben
\left({\rm Sym}\left(\scr{L}(U)[1] \oplus
    \bfC \cdot C \right), \dd_{\scr{L}} + \dd_{\rm CE} + \omega\right)
\een
where $\omega$ is made into an operator on ${\rm Sym}$ as
follows. On ${\rm Sym}^{\leq 1}$ it is zero and on ${\rm Sym}^2$ it is
\ben
(\alpha, \beta) \mapsto
C \cdot \omega(\alpha, \beta) .
\een 
It is extended to the full symmetric algebra by demanding that it is a
graded derivation. Note that $U_\omega^{\rm fact} \scr{L}^\bfC =
U^{\rm fact} \Hat{\scr{L}}^\bfC_c$ so that the twisted envelope is
just the envelope of the extended local Lie algebra. 

\subsubsection{The Virasoro factorization algebra}

We will now specialize to factorization algebras valued in the
symmetric monoidal
category of dg nuclear vector spaces $\dgNuc$ or slight variants
thereof. 

In the remainder of the paper we are interested in both the untwisted and twisted 
factorization envelopes of the local Lie algebra of holomorphic vector
fields.

First, define the {\em Virasoro factorization algebra at central
  charge zero} by
\ben
\sVir_0 := U^{\rm fact} \scr{L}^\bfC .
\een 
This is a factorization algebra valued in the category $\dgNuc$ (since the
Dolbeault complexes belong to this category). 

Let $\omega \in {\rm H}^1_{\rm loc}(\sL^\bfC)$ denote the cocycle from
Section \ref{sec shiftext}. We define the {\em Virasoro factorization
  algebra} by
\ben
\sVir := U^{\rm fact}_{\omega} \scr{L}^\bfC  .
\een
The factorization algebra $\sVir$ is a factorization algebra in
the category of $\bfC[c]$-modules in dg nuclear vector spaces. In
particular, we can specialize a value of $c$ to obtain a factorization
algebra in $\dgNuc$. We will denote such a specialization by $\sVir_c$
and call it the {\em Virasoro factorization algebra of central charge
  $c$}. 

\section{Annuli: recovering the Virasoro} \label{secann} 
In this section we show how the Virasoro Lie algebra is encoded in the
factorization algebras constructed above. 

First we recall the definition the Virasoro
Lie algebra. Consider the ring of Laurent power series in one
variable $\bfC ((t))$. As a vector space the Lie algebra of
derivations ${\rm W}_1^\times := {\rm Der}\left(\bfC ((t)) \right)$ is isomorphic to $\bfC
((t)) \partial_t$. The ring $\bfC ((t))$ is equal to functions on the
holomorphic formal
punctured disk $\Hat{D}^\times$ and ${\rm W}_1^\times$ is the Lie
algebra of formal vector fields on
the punctured disk. Let ${\rm Vir}$ be
the central extension of ${\rm W}_1^\times$ determined by the
Gelfand-Fuks cocycle $\omega_{\rm GF}$ defined in the introduction. It fits into the exact sequence of Lie algebras
\ben
0 \to \bfC \cdot C \to {\rm Vir} \to {\rm W}_1^\times \to 0 .
\een
Thus, as a vector space we have ${\rm Vir} = \bfC((t)) \partial_t
\oplus \bfC \cdot C$. 
Explicitly, the bracket in this Lie algebra is
\ben
[f(t) \partial_t, g(t) \partial_t] = (f(t) g'(t) - f'(t)
g(t))\partial_t +
\frac{1}{12} \oint f'''(t) g(t) \d t \cdot C .
\een
It is topologically
generated by $c$ and $L_n = t^{n+1} \partial_t$ and in terms of these
generators, the commutator is
\ben
[L_n,L_m] = (n-m) L_{n+m} + \frac{m^3-m}{12} \delta_{n,-m} \cdot C .
\een 

Now, consider the universal enveloping algebra of the Virasoro Lie
algebra $U({\rm Vir})$. Being an associative
algebra it determines a locally constant factoriztation algebra on
$\bfR_{>0}$. Denote this factorization algebra by $\scr{A}_{\rm
  Vir}$. Explicitly, $\scr{A}_{\rm Vir}$ sends an interval $I$ to
$U({\rm Vir})$ (considered as a dg vector space concentrated in degree
zero) and the structure maps are induced by the usual
associative multiplication on $U({\rm Vir})$. 

Let $\rho : \bfC^\times \to
\bfR_{>0}$ be the map $z \mapsto z \Bar{z}$. We consider the
push-forward factorization algebra $\rho_* \sVir$. This is a
factorization algebra on $\bfR_{>0}$. The main result of this section can be stated as follows. 
\begin{prop}\label{phi} There is a map of factorization algebras 
\be\label{dense}
\Phi : \scr{A}_{\rm Vir} \to {\rm H}^0(\rho_* \sVir)
\ee
that is a dense inclusion of topological vector spaces on evey open
interval $I \subset \bfR_{>0}$. 
\end{prop}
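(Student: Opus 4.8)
The plan is to reduce the statement to a computation on annuli and then to match the factorization product with the associative product on $U(\Vir)$. Since $\rho(z) = z\Bar{z} = |z|^2$, the preimage of an open interval $I = (a,b) \subset \bfR_{>0}$ is the open annulus $A_I = \{\sqrt{a} < |z| < \sqrt{b}\} \subset \bfC^\times$, so by definition of the pushforward $(\rho_*\sVir)(I) = \sVir(A_I)$. The first step is therefore to compute ${\rm H}^0(\sVir(A_I))$ as a topological vector space, which I would do exactly as in the computation of ${\rm H}^{\rm Lie}_*$ above: the complex $\sVir(A_I) = C_*(\Hat{\sL}^\bfC_c(A_I))$ carries the symmetric-power filtration whose $E_1$-page is $\Hat{\rm Sym}\big({\rm H}^*_{\dbar}(\sL^\bfC_c(A_I))[1] \oplus \bfC\cdot C\big)$. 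Serre's lemma puts ${\rm H}^*_{\dbar}(\sL^\bfC_c(A_I))$ in degree $1$, where it is the continuous dual $\Omega^1_{\rm hol}(A_I,TA_I)^\vee$; since this is abelian and the induced central cocycle $\Bar{\omega}$ vanishes on it (the formula for $\omega$ pairs a degree-$0$ class with a degree-$1$ class, yet here everything sits in degree $1$), the spectral sequence degenerates and ${\rm H}^*(\sVir(A_I))$ is concentrated in degree $0$, equal to $\Hat{\rm Sym}\big(\Omega^1_{\rm hol}(A_I,TA_I)^\vee \oplus \bfC\cdot C\big)$.

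Next I would write down $\Phi$ on generators. A holomorphic vector field $z^{n+1}\partial_z$ on $A_I$ determines a class in ${\rm H}^1_{\dbar}(\sL^\bfC_c(A_I))$ by choosing a compactly supported cutoff $\chi$ that is $1$ on an inner sub-annulus and setting $L_n \mapsto [(\dbar\chi)\,z^{n+1}\partial_z]$; this class is independent of $\chi$, and by Stokes it pairs with a holomorphic quadratic differential $q$ through the residue $\oint \iota_{z^{n+1}\partial_z} q$, so the classes $\{[(\dbar\chi)z^{n+1}\partial_z]\}_{n\in\bfZ}$ are dual to quadratic differentials and form a topological basis of ${\rm H}^1_{\dbar}(\sL^\bfC_c(A_I))$. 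Sending $L_n$ to this class, $C$ to the central generator $C$, and extending as an algebra homomorphism into $\Hat{\rm Sym}(\cdots)$ defines $\Phi$ on each interval. Because ${\rm H}^0(\sVir(A_I))$ is the completed symmetric algebra on $\Omega^1_{\rm hol}(A_I,TA_I)^\vee \oplus \bfC\cdot C$, while $U(\Vir) \cong {\rm Sym}(\Vir)$ as vector spaces by PBW, this is well defined at the level of underlying graded vector spaces.

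The substance of the proof is to check that $\Phi$ is a morphism of factorization algebras, i.e. that it intertwines the associative structure of $\scr{A}_{\rm Vir}$ with the factorization product on ${\rm H}^0(\rho_*\sVir)$. The only nontrivial relations to match are the Virasoro brackets, which on the target are computed from the factorization product of two concentric sub-annuli $A_1$ (inner) and $A_2$ (outer) inside $A_I$: the commutator $[\Phi(L_n),\Phi(L_m)]$ is the antisymmetrization of the image of $\Phi(L_n)\tensor\Phi(L_m)$ under $\sVir(A_1)\tensor\sVir(A_2)\to\sVir(A_I)$. Representing both factors by cutoff representatives $(\dbar\chi_i)z^{\bullet}\partial_z$ and simplifying in $\sVir(A_I)$ using the differential $\dd_{\rm CE}+\omega$, the $\dd_{\rm CE}$-term reproduces the Witt bracket $[z^{n+1}\partial_z,z^{m+1}\partial_z] = (n-m)z^{n+m+1}\partial_z$, while the $\omega$-term contributes a multiple of $C$. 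I expect this last piece to be the main obstacle: one must show that once the two cutoff representatives are placed in the common annulus $\omega$ evaluated on them is no longer zero (now one factor contributes its degree-$0$ part where $\chi_i \equiv 1$ and the other its degree-$1$ part $\dbar\chi_j$), and that this residue integral of $\tfrac{1}{2\pi}\tfrac{1}{12}\partial_z^3(\,\cdot\,)$ evaluates precisely to $\tfrac{m^3-m}{12}\delta_{n,-m}$, matching the Gelfand--Fuks term in $\Vir$. This is the Virasoro analogue of the Kac--Moody computation in \cite{CG1}, and the bulk of the work is this explicit residue calculation together with checking naturality of the construction under interval inclusions and disjoint unions, which encode the unit and multiplication of $U(\Vir)$.

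Finally I would establish the density and injectivity claims. By the computation above, ${\rm H}^0(\rho_*\sVir)(I)$ is the completed symmetric algebra $\Hat{\rm Sym}$ on $\Omega^1_{\rm hol}(A_I,TA_I)^\vee\oplus\bfC\cdot C$, whereas the image of $U(\Vir)$ is the uncompleted symmetric algebra on the span of the $L_n$ and $C$. Density then reduces to the classical fact that Laurent polynomials are dense in the holomorphic functions on the annulus (equivalently, the vector fields $z^{n+1}\partial_z$ span a dense subspace of $\Omega^1_{\rm hol}(A_I,TA_I)^\vee$), together with the observation that $\Hat{\rm Sym}$ of a space is the completion of ${\rm Sym}$ of any dense subspace. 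Injectivity of $\Phi$ follows from non-degeneracy of the residue pairing between vector fields and quadratic differentials, which shows the classes $[(\dbar\chi)z^{n+1}\partial_z]$ are linearly independent. Assembling these interval-wise statements with the naturality from the previous step yields the dense inclusion of factorization algebras $\Phi$.
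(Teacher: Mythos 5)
Your overall architecture matches the paper's: identify $\rho^{-1}(I)$ with an annulus, compute the cohomology there via Serre duality and formality, define $\Phi$ on the generators $L_n$ by smearing $z^{n+1}\partial_z$ into a compactly supported $(0,1)$-form, verify the Virasoro relations through radially ordered factorization products, and get density from the residue pairing. However, there are two genuine gaps. The first is that your representatives are identically zero in cohomology. If $\chi$ is compactly supported on the open annulus $A_I$ and $\equiv 1$ on a middle sub-annulus, then $\chi\, z^{n+1}\partial_z$ is itself a compactly supported element of degree $0$, so
\begin{equation*}
(\dbar \chi)\, z^{n+1}\partial_z \; = \; \dbar\left(\chi\, z^{n+1}\partial_z\right)
\end{equation*}
is exact in $\Omega^{0,*}_c(A_I, TA_I)$, and your own Stokes argument then returns $0$ for every pairing with a quadratic differential (the form $\chi\,\iota_{z^{n+1}\partial_z} q$ is compactly supported, so there is no boundary term). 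Every $L_n$ would map to $0$. The fix is exactly the paper's choice: use a radial bump function $f$ with $\int_{A_I} f\, \dd z\, \dd \zbar = 1$, whose radial primitive $h(s) = \int_0^s f$ is \emph{not} compactly supported on $A_I$ (equivalently, use a cutoff that is $1$ near one boundary circle and $0$ near the other, which is not compactly supported even though its $\dbar$ is). The nonvanishing of the class is precisely the nonvanishing of $\int_{A_I} f$, and this is also what makes the residue pairing nonzero.

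The second gap is that the core of the proof is deferred rather than carried out. You correctly identify that one must show the radially ordered commutator equals $(m-n)L_{m+n} + \tfrac{m^3-m}{12}\delta_{n,-m}\,C$, but you call this ``the main obstacle'' and ``the bulk of the work'' without performing it; this computation \emph{is} the paper's proof of the proposition. Concretely, the paper takes three nested annuli $A_1, A_2, A_3$, represents $L_m$ on $A_1$ and $A_3$ and $L_n$ on $A_2$, and cobounds the difference $L_m(A_1) - L_m(A_3)$ by the explicit degree-zero element $F(z,\zbar)\partial_z$ with $F(z,\zbar) = z^{m+1}\int_0^{z\zbar}\left(f_1(s)-f_3(s)\right)\dd s$. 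Computing the Chevalley--Eilenberg differential of $F\partial_z \cdot L_n(A_2)$ produces the Witt bracket up to a correction term $z^{m+n+3}\,\partial_z f_2\; \dd\zbar\,\partial_z$, which is removed by a second explicit coboundary, and the cocycle term $\omega(F\partial_z, L_n(A_2))$ is evaluated (using that $\int_0^{z\zbar}(f_1 - f_3)$ is constant on $A_2$, by the support configuration) to give $2\pi(m^3-m)\delta_{n,-m}$, hence the correct central term after normalization. Without this, the map out of $U(\Vir)$ is not even well defined: $U(\Vir)$ is presented by the Virasoro relations, so one must verify that the chosen cohomology classes satisfy those relations under the factorization product before any extension ``by algebra homomorphism'' makes sense. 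Your spectral-sequence identification of the target and the density argument via Laurent expansion are fine and agree with the paper, but they are the easy part.
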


Note that on an open
interval $I \subset \bfR_{>0}$
\ben
(\rho_*\sF_\omega)(I) \; = \; \sVir (\rho^{-1}(I)) .
\een
So, we need to understand what $\sVir$ does to annuli. 

\begin{remark} This proposition says that every cohomology class in
  $\sVir$ applied to
  an annulus is arbitrarily close to some element of the universal
  enveloping algebra of the Virasoro Lie algebra. Moreover, the
  structure maps of the factorization algebra are the continuous
  extensions of the multiplication for $U{\rm Vir}$. 
\end{remark}

\subsection{The case of zero central charge}

Recall that we have the following identification for any open $U \subset \bfC$:
\ben
{\rm H}^*(\sVir_0(U)) \; \cong \; {\rm Sym} \left({\rm
    H}^1(\Omega^{0,*}_c(U,TU)) \right) \; \cong \; {\rm Sym} \left( \Omega^1_{\rm
    hol}(U,TU)^\vee \right)
\een
concentrated in cohomological degree $0$. 

First, we describe the untwisted version of the map (\ref{dense}),
denote it $\Phi_{un} : \bfC((z)) \partial_z \to \rho_*(\sVir_0)$. Let $L_n = z^{n+1} \partial \in \bfC((z)) \partial_z$ be the usual basis
vectors for $n \in \bfZ$. Pick an open interval $I \subset
\bfR_{>0}$ and let $A = \rho^{-1}(I)$. We will utilize a function $f :
\bfC^\times \to \bfR$ for $A$ that satisfies the following:
\begin{itemize}
\item $f$ is only a function of $r^2 = z \Bar{z}$. 
\item $\int_{A} f \; \dd z \dd\Bar{z} = 1$.  
\item $f(r^2) \geq 0$ and $f$ is supported on $A$. 
\end{itemize}
We will refer to $f$ as a {\it bump function} for $A$. Finally, we
define
\ben
\Phi_{un}(I) : L_n\mapsto \left \lfloor f(z\zbar) z^{n+2} \dd
\zbar \partial_z \right\rfloor
\een
where $\left\lfloor - \right\rfloor$ denotes the cohomology class in compactly
supported Dolbeault forms. Note that this map is a dense inclusion of topological vector spaces
by Serre's resulted stated above. Therefore, we might unambiguously
confuse $L_n$ with its image in ${\rm H}^*(\sVir_0 (A))$. Also, it will be convenient to use the notation $L_n(A) = f(z \zbar) z^{n+2}
\dd \zbar \partial_z$ for the lift of $L_n$ to the factorization
algebra. We make no reference to the bump function chosen since this choice will not affect the cohomology class. 

Consider three nested disjoint annuli $A_1,A_2,A_3$ where $A_i$ has
inner radius $r_i$ and outer radius $R_i$ so that $R_1 < r_2$ and $R_2
< r_3$. Suppose all three are contained in the big annuli $A$,
i.e. $r < r_1$ and $R_3 < R$. 

Let's explain some notation for the factorization product of such
nested annuli. The relavent factorization maps are
\bestar
\bullet & : & \sVir_0 (A_2) \tensor \sVir_0 (A_1) \to \sVir_0 (A) \\
\bullet & : & \sVir_0(A_3) \tensor \sVir_0(A_2) \to \sVir_0(A) .
\eestar
Moving outward, radially, corresponds to multiplying from the right to
left in this notation. This is known as {\it radial ordering}. Using
this notation, upon taking cohomology we want to show
\ben
L_m \bullet L_n- L_n \bullet L_m = (m-n) L_{n+m} .
\een

\begin{remark} This is a bit of abuse of notation, as we are using the
  same symbol $L_m$ even though the two live in different
  spaces. This is a superficial confusion since $\Phi_{un}$ is an
  embedding, but what the above
  expression actually means is
\ben
\Phi_{un}(\rho(A_2))(L_m) \bullet \Phi_{un}(\rho(A_1))(L_n) -
\Phi_{un}(\rho(A_3))(L_n) \bullet \Phi_{un}(\rho(A_2))(L_m) = (m-n)
\Phi_{un}(\rho(A))(L_{n+m}) .
\een 
\end{remark}

Let $f_i: \bfC^\times \to \bfR$ be a bump function for $A_i$,
$i=1,2,3$. We use these to obtain lifts of $L_n$'s to the
factorization algebra. Explicitly, $L_m(A_1) \in \sVir_0(A_1)$,
$L_m(A_3) \in \sVir_0(A_3)$ and $L_n(A_2) \in \sVir_0(A_2)$. 

Now, in cohomology
\ben
\left\lfloor L_m(A_1) L_n(A_2) - L_n(A_2) L_m(A_3) \right\rfloor = L_m \bullet L_n- L_n \bullet
L_m
\een
and
\ben
(m-n) L _{m+n} = \left\lfloor [L_m,L_n] (A) \right\rfloor = \left\lfloor f_2(r^2) (m-n)
z^{n+m+2} \dd \Bar{z} \tensor \partial_z \right\rfloor  .
\een

Consider the function
\ben
F(z,\Bar{z}) = z^{m+1} \int_{0}^{z\Bar{z}} f_1(s) - f_3(s) \; \dd s .
\een
We compute the $\dbar$ operator acting on $F(z,\zbar)$ as
\bestar
\Bar{\partial} (F(z,\Bar{z})) & = & z^{m+1} \frac{\partial}{\partial
  \Bar{z}} \left(\int_{0}^{z\Bar{z}} f_1(s) - f_3(s) \; \dd s \right)
\dd \Bar{z}
\\ & = & z^{m+1} \frac{\partial(z \Bar{z})}{\partial \Bar{z}}
\frac{\partial}{\partial(z \Bar{z})} \left(\int_{0}^{z\Bar{z}} f_1(s)
  - f_3(s) \; \dd s \right) \dd \Bar{z} \\ & = & z^{m+2} \left(f_1(z
\Bar{z}) - f_3(z \Bar{z})\right) \dd \Bar{z} .
\eestar
Similarly, we have the element $F(z,\zbar) \partial_z \in
\Omega^{0,*}(A,TA)$ and the formula above implies
\ben
\Bar{\partial}(F (z,\zbar)\partial_z) = L_m(A_1) - L_m(A_3) .
\een
Let $\dd$ denote the differential in $C_*(\scr{L}_\bfC
(A))$. The above implies
\ben
\dd(F(z,\Bar{z}) \partial_z \cdot L_n(A_2)) \; = \; (L_m(A_1) - L_m(A_3))
L_n(A_2) + \left\lfloor F(z,\Bar{z}) \partial_z, L_n(A_2)\right\rfloor_{\sVir_0(A_2)} .
\een
We compute
\bestar
\left\lfloor F(z,\Bar{z}) \partial_z, L_n(A_2)\right\rfloor_{\sVir_0(A_2)} & = &
f_2(r^2) \dd \Bar{z}  \left\lfloor z^{m+1}\partial_z,
z^{n+2}\partial_z\right\rfloor +
z^{m+n+3} \frac{\partial f_2(r^2)}{\partial z} \dd \zbar \partial_z \\ & = &
(m-n-1) L_{m+n}(A_2) + z^{m+n+3} \frac{\partial f_2(r^2)}{\partial z}
\dd \zbar \partial_z .
\eestar
Combining, obtain
\be\label{coh1}
L_m \bullet L_n - L_n \bullet L_m - [L_m,L_n] - L_{m+n}+
\left\lfloor z^{m+n+3}\frac{\partial f_2(r^2)}{\partial z} \dd
  \zbar \partial_z \right\rfloor = 0
\ee
where the bracket denotes the cohomology class. We consider the last
term. Introduce the element $z^{n+m+2} \Bar{z}
f_2(r^2) \partial_z$. Applying the $\dbar$-operator we get
\bestar
\Bar{\partial} (z^{n+m+2}\Bar{z} f_2(r^2) \partial_z) & = & z^{n+m+2} f_2(r^2) \dd
\Bar{z} \partial_z + z^{n+m+2} \Bar{z} \left(\frac{ \partial f_2(r^2)}{\partial\Bar{z}}\right) \dd
\Bar{z} \partial_z  \\ &
= & L_{n+m}(A_2) + z^{n+m+3} \frac{\partial f(r^2)}{\partial z} \dd \Bar{z} \partial_z
\eestar
where in the last line we use the fact that $\frac{\partial}{\partial
  \zbar} f_2(r^2) = z f_2'(r^2)$ and $\frac{\partial}{\partial
  zr} f_2(r^2) = \zbar f_2'(r^2)$. Thus, in cohomology we have
\ben
\left\lfloor z^{n+m+3} \frac{\partial f_2}{\partial z} \dd \zbar \partial_z
\right\rfloor = L_{n+m}
\een
so that (\ref{coh1}) simplifies to 
\ben
L_m \bullet L_n - L_n \bullet L_m - [L_m,L_n] = 0 .
\een

\subsection{The case of nonzero central charge}
We now describe the twisted case. As a vector space we have
\ben
{\rm Vir} = \bfC((z)) \partial_z \oplus \bfC \cdot C
\een
where $c$ is the central parameter. We recall that the Lie bracket is
\ben
[L_n,L_m] = (m-n) L_{n+m} + \frac{m^3-m}{12} \delta_{n,-m} c .
\een

Again, let $I \subset \bfR_{>0}$ and write $A = \rho^{-1}(I)$. The map
$\Phi$ is defined by $\Phi(I)|_{\bfC((z)) \partial_z} = \Phi_{\rm un}$
and it sends the central parameter of ${\rm Vir}$ to the central
parameter of $\scr{L}_c^\bfC(A) \oplus \bfC \cdot C[-1]$. 

The factorization algebra $\sVir$ assigns to the annulus $A$ the
dg vector space:
\ben
\sVir(A) = \left({\rm Sym}(\Omega^{0,*}(A,TA)[1] \oplus
  \bfC \cdot C), \dbar + \dd_{\rm CE}\right) .
\een
where $\omega \in C^1(\scr{L}^\bfC)$ is the central extension as
above. We need to show
\ben
L_m \bullet L_n- L_n \bullet L_m = (m-n) L_{n+m} + \frac{m^3 - m}{12}
\cdot c .
\een

Let the notation be as above. We have
\bestar
\dd (F(z,\Bar{z}) \partial_z \cdot L_n(A_2)) & = & (L_m(A_1) - L_m(A_3))
L_n(A_2) + \left[ F(z,\Bar{z}) \partial_z, L_n(A_2)\right]_{\Hat{\sL}(A_2)} \\ & = & (L_m(A_1) - L_m(A_3))
L_n(A_2) - (m-n-1) L_{m+n}(A_2) +
z^{m+n+3} \frac{\partial f_2}{\partial z} \dd \zbar \partial_z  \\ & -
&
\frac{1}{2 \pi} \frac{c}{12}
\int_{A} F(z,\Bar{z})\frac{\partial^3}{\partial z^3}\left(f_2(r^2) z^{n+2}\right) \dd z
\dd\Bar{z} .
\eestar
Everything is the same as the zero central charge calculation except
for the last line. Applying the same trick as in the previous section
to the second line, we see that $\dd
(F(z,\Bar{z}) \partial_z \cdot L_n(A_2))$ is cohomologous to 
\bestar
(L_m(A_1) - L_m(A_3))
L_n(A_2) & - & (m-n) L_{m+n}(A_2) \\ & - & \frac{1}{2 \pi}
\frac{c}{12} \int_A \frac{\partial^3}{\partial z^3} \left(F(z,\Bar{z})\right) f_2(r^2) z^{n+2} \d z
\dd\Bar{z} 
\eestar

We compute
\bestar
\int_A \frac{\partial^3}{\partial z^3} \left(F(z,\Bar{z})\right)
f_2(r^2) z^{n+2} \dd z
  \dd \Bar{z} & = &\int_A f_2(r^2) z^{n+2} \partial_z^3 \left(z^{m+1}
    \int_{0}^{z\Bar{z}} f_1(s) - f_3(s) \; \dd s \right) \dd z \dd
  \Bar{z} \\ & = &  (m^3-m) \int_A f_2(r^2)z^{n+ m}
\dd
  z \dd \Bar{z} \\ 
  & = & (m^3-m) \left(\int_{0}^{2\pi} e^{i(n+m) \theta} \dd \theta \right)
  \left(\int_{0}^r f_2(r^2) r^{n+m} r\dd r\right) \\ & = & 2 \pi  (m^3-m) \delta_{n,-m} .
\eestar
In the second line we used the fact that the function $z \mapsto
\int_0^{z \Bar{z}} f_1 -f_3$ is constant on $A_2$. Thus, $\dd(F(z,\zbar) \partial_z \cdot L_n(A_2))$ is cohomologous to 
\ben
(L_m(A_1) - L_m(A_3))
L_n(A_2) - (m-n) L_{m+n}(A_2) - \frac{m^3 - m}{12} \delta_{n,-m} \cdot
c .
\een
Wrapping everything up, in cohomology we have verified 
\ben
0 = L_m \bullet L_n - L_n \bullet L_m - [L_m,L_n] + \frac{m^3-m}{12}
\delta_{n,-m} \cdot c
\een
as desired.

This completes the proof of Proposition \ref{phi}. 

\section{The vertex algebra structure}\label{secvert}

We sketch the main points of Costello-Gwilliam's treatment of
extracting vertex algebras from structured factorization algebras on
$\bfC$. We then use their characterization to show that the
factorization algebra $\sVir$ determines a vertex algebra and
go further to identify it with the usual Virasoro vertex algebra using
the construction.

First, we need to review the definition of a vertex algebra. It consists of a vector
space $V$ over the field $\bfC$ along with the following data:
\begin{itemize}
\item A vacuum vector $\left|0\> \in V$.
\item A linear map $T : V \to V$ (the translation operator).
\item A linear map $Y(-,z) : V \to {\rm End}(V)\llbracket z^{\pm 1}
  \rrbracket$ (the vertex operator). We write $Y(v,z) = \sum_{n \in \bfZ} A_n^v z^{-n}$
  where $A_n^v \in {\rm End}(V)$. 
\end{itemize} 
satisfying the following axioms:
\begin{itemize}
\item For all $v,v' \in V$ there exists an $N \gg 0$ such that $A_n^v
  v' = 0$ for all $n > N$. (This says that $Y(v,z)$ is a {\it field}
  for all $v$). 
\item (vacuum axiom) $Y(\left| 0 \>, z) = {\rm id}_V$ and
    $Y(v,z)\left|0\> \in v + z V \llbracket z \rrbracket$ for all
    $v \in V$. 
\item (translation) $[T,Y(v,z)] = \partial_z Y(v,z)$ for all $v \in
  V$. Moreover $T$ kills the vacuum. 
\item (locality) For all $v,v' \in V$, there exists $N \gg 0$ such
  that 
\ben
(z-w)^N[Y(v,z),Y(v',w)] = 0
\een
in ${\rm End}(V) \llbracket z^{\pm 1},w^{\pm 1}\rrbracket$. 
\end{itemize}

We will utilize a reconstruction theorem for vertex
algebras. It says that a vertex algebra is completely and uniquely determined by a
countable set of vectors, together with a set of fields of the same
cardinality and a translation operator subject to a list
of axioms. 

\begin{theorem}[Theorem 2.3.11 of \cite{FBZ}] Let $V$ be a complex vector space equipped with: an
  element $\left|0 \> \in V$, a linear map $T : V \to V$, a countable
    set of vectors $\{a^s\}_{s \in S} \subset V$, and fields $A^s(z) =
    \sum_{n \in \bfZ} A_n^sz^{-n-1}$ for each $s\in S$ such that:
\begin{itemize}
\item For all $s \in S$, $A^s(z) \left|0\> \in a^s + z V\llbracket
    z\rrbracket$;
\item $T \left|0\> = 0$ and $[T,A^s(z)] = \partial_z A^s(z)$;
\item $A^s(z)$ are mutually local;
\item and $V$ is spanned by $\{A_{j_1}^{s_1} \cdots A_{j_m}^{s_m}
  \left|0\>\}$ as the $j_i's$ range over negative integers. 
\end{itemize}
Then, the data $(V,\left|0\>, T,Y)$ defines a unique vertex algebra satisfying 
\ben
Y(a^s,z) = A^s(z) .
\een
\end{theorem}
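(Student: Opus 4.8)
The plan is to build the state--field correspondence $Y$ explicitly on the given spanning vectors and then to promote it to a well-defined map on all of $V$ via a uniqueness principle for fields. First I would set $Y(\left|0\>,z) = {\rm id}_V$ and, for a spanning vector $b = A_{j_1}^{s_1}\cdots A_{j_m}^{s_m}\left|0\>$ with all $j_i < 0$, define
\ben
Y(b,z) \;=\; \frac{1}{(-j_1-1)!\cdots(-j_m-1)!}\; :\partial_z^{-j_1-1}A^{s_1}(z)\cdots\partial_z^{-j_m-1}A^{s_m}(z): ,
\een
where $:\,\cdots\,:$ denotes the iterated normally ordered product of fields. Since a normally ordered product of fields is again a field and $\partial_z$ preserves the field property, each $Y(b,z)$ is a field; extending $\bfC$-linearly over the spanning set produces a candidate map $Y$.

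The hard part is \emph{well-definedness}: the spanning set is not assumed linearly independent, so I must show this formula descends to $V$. The essential tool is Goddard's uniqueness theorem --- any field $B(z)$ that is mutually local with all the generators $A^s(z)$ and satisfies $B(z)\left|0\> = e^{zT}b$ is uniquely determined by $b$. To invoke it I would establish two properties of the constructed fields. The first is the \emph{creation property} $Y(b,z)\left|0\> = e^{zT}b$: for a generator this follows because $g(z) := A^s(z)\left|0\>$ satisfies $\partial_z g = T g$ (from $[T,A^s(z)] = \partial_z A^s(z)$ and $T\left|0\> = 0$) with $g(0) = a^s$, forcing $g(z) = e^{zT}a^s$, and the normal-ordering construction propagates this to general $b$. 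The second is \emph{mutual locality} of all the $Y(b,z)$, which is exactly the content of Dong's lemma: normally ordered products and $\partial_z$-derivatives of pairwise mutually local fields remain mutually local. Granting both, two product-expressions for the same $b$ give fields obeying the hypotheses of Goddard's theorem with identical $b$, hence they agree, and $Y$ is well-defined.

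With $Y$ in hand the remaining axioms are formal. The field axiom is built into the construction; the vacuum axiom is the creation property just established; the translation axiom $[T,Y(b,z)] = \partial_z Y(b,z)$ follows by induction on $m$ using the derivation property of $[T,-]$ against the normal ordering with base case $[T,A^s(z)] = \partial_z A^s(z)$; and locality is again Dong's lemma. Uniqueness of the entire structure is one more application of Goddard's theorem, since any vertex algebra with $Y(a^s,z) = A^s(z)$ must send each $b$ to a field local with the $A^s(z)$ that creates $e^{zT}b$, hence to our $Y(b,z)$. I expect the genuine obstacle to be precisely this well-definedness step --- concretely, proving Goddard's uniqueness theorem and Dong's lemma at the needed level of generality --- after which the axioms fall out mechanically. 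The one subtlety worth flagging is the bookkeeping in the displayed formula: the factorials and the convention relating each mode index $j_i$ to a power of $\partial_z$ must be fixed consistently so that the creation property reproduces $b$ with no spurious scalar factor.
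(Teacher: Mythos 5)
Your proposal is correct and follows essentially the same route as the source the paper relies on: the paper does not prove this theorem itself but quotes it from \cite{FBZ}, and your argument --- defining $Y$ on spanning vectors by factorially normalized, iterated normally ordered products of $\partial_z$-derivatives of the $A^s(z)$, then settling well-definedness, locality, and uniqueness via Dong's lemma and Goddard's uniqueness theorem, with the creation property $Y(b,z)\left|0\right> = e^{zT}b$ deduced from $[T,A^s(z)]=\partial_z A^s(z)$ and $T\left|0\right>=0$ --- is exactly the standard proof given there. The only outstanding items are the ones you explicitly flag (proving Dong's lemma and Goddard's theorem, and fixing the factorial conventions), all of which are routine.
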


The main result of this section identifies two vertex algebras: the
first comes from the factorization algebra, the other one is the
Virasoro vacuum vertex algebra defined in the next section. We prove
these are the same using the above reconstruction theorem.

\subsection{The Virasoro vertex algebra} 
We recall the definition of the Virasoro vertex algebra. For us, it
will be a vertex algebra over the polynomial ring $\bfC[c]$. For an
arbitrary value of $c$ this will specialize to the usual Virasoro
vertex algebra associated to that central charge. First, consider, as
we did above, the associative algebra given by the universal envelope
of the Virasoro Lie algebra $U = U({\rm Vir})$. There is a subalgebra $U_+
\subset U({\rm Vir})$ generated by elements of the form $z^{n+1} \partial_z$ with
$n \geq -1$. Next, define
\ben 
{\bf Vir} = {\rm Ind}_{U_+}^U \; \bfC_c = U \tensor_{U_+} \bfC_c
\een
where the $L_n$'s act trivially on $\bfC_c$ and the central parameter $C$
acts by multiplication by $c$. The vacuum vector is the natural image
of the element $1 \tensor 1 \in U \tensor \bfC$ in ${\bf Vir}_c$. The fields are
\ben 
L(z) := \sum_{n \in \bfZ} L_n z^{-n-2} 
\een
and translation operator is $T = L_{-1} = \partial_z$. These satisfy the axioms
in the reconstruction theorem, and so define a vertex algebra, simply
denoted ${\bf Vir}$. We will call this $\bfC[c]$-linear vertex algebra
the {\em Virasoro vertex algebra}. Note that when we specialize to a particular complex
number we obtain the $\bfC$-linear vertex algebra ${\bf Vir}|_{c = c_0} = {\bf
  Vir}_{c_0}$ called the {\em Virasoro vertex algebra of central
  charge $c$}. 

\subsection{From factorization to vertex} 
In the first part of this note we studied a particular two-dimensional
factorization algebra and did not mention a vertex algebra. This
section is a bit of an aside and sketches the relationship between certain structured factorization algebras on $\bfC$ and vertex
algebras. This relationship is made more precise in \cite{CG1}, but we
try to sketch the main points. The main result is essentially a functor
from a  subcategory of factorization algebras on $\bfC$ to vertex algebras, and
we will use this result to read off the vertex
algebra structure from the factorization algebra $\sVir$
above. 

The maps $Y(-,z)$ encode the ``multiplication'' of the
vertex algebra. We can view it has a multiplication parametrized by a
complex coordinate $z \in \bfC$. Consider the two points $0,z \in \bfC$ with $z \ne
0$. This multiplication has the form
\ben
Y_z : V_0 \tensor V_z \to V((z)) 
\een
Critical to the structure of
a vertex algebra is holomorphicity. Indeed, the axioms imply that the
$Y_z$'s vary holomorphically. Thus, the factorization algebra we start
with must be translation invariant (so the vector space assigned
does not depend on the open set up to translations) together with a
holomorphicity condition. 

For the remainder of this section, let $\sF$ be a
prefactorization algebra on $\bfC$ in the appropriate category of
differentiable vector spaces. \footnote{Some care is needed to
  define this category correctly. We refer the interested reader to
  \cite{CG1}} 

We say that $\sF$ is {\it holomorphically translation invariant}
if
\begin{itemize}
\item $\sF$ is translation invariant. 
\item There exists a degree $-1$ derivation $\eta: \sF \to
  \sF$ such that $\dd \eta = \partial_{\Bar{z}}$ as derivations of
  $\sF$. 
\end{itemize}

Also important will be the notion of a {\it smooth $S^1$-equivariant}
structure on $\sF$. We will mention this shortly. For now, we
discuss how to read off the structure of a vertex algebra from a
holomorphic translation invariant factorization algebra. The key is
that such factorization algebra defines a coalgebra structure over a
certain (colored) cooperad.

Define the complex manifold
\ben
{\rm Discs}(r_1,\ldots,r_k) := \left\{z_1,\ldots,z_k \in \bfC \; | \;
D(z_1,r_1) \sqcup \cdots \sqcup D(z_k,r_k) \; {\rm disjoint}\right\}
\subset \bfC^k .
\een
The collection of these spaces form a $\bfR_{> 0}$-colored operad in the category of complex
manifolds, which we denote ${\rm Discs}$.  Applying the functor
$\Omega^{0,*}$ we get a $\bfR_{>0}$-colored cooperad $\Omega^{0,*}({\rm
  Discs})$ in the category of differentiable vector spaces. The main
technical fact that we use to read off the structure of a vertex
algebra is

\begin{prop}[\cite{CG1}] Let $\sF$ be a holomorphically
  translation-invariant factorization algebra on $\bfC$. Then,
  $\sF$ defines an algebra over the $\bfR_{>0}$-colored cooperad
  $\Omega^{0,*}({\rm Discs})$. 
\end{prop}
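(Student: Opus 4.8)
The plan is to unpack what "algebra over a colored cooperad" means concretely and then verify the required structure maps and coherences directly from the prefactorization structure, using the holomorphic translation invariance to supply the $\Omega^{0,*}$-refinement. The colored cooperad $\Omega^{0,*}(\mathrm{Discs})$ has, for each tuple of radii $(r_1,\ldots,r_k;s)$ with $D(z_1,r_1)\sqcup\cdots\sqcup D(z_k,r_k)\subset D(0,s)$, the space of cooperations $\Omega^{0,*}(\mathrm{Discs}(r_1,\ldots,r_k)_s)$; an algebra over it is an assignment of an object $\sF_r$ to each color $r$ together with structure maps built from the factorization products and the $\Omega^{0,*}$-dependence on the configuration space of disc centers. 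So the first thing I would do is fix, for the translation-invariant $\sF$, the single underlying object $\sF(D(0,r))$ as the value on the color $r$, and observe that translation invariance identifies $\sF(D(z,r))$ with $\sF(D(0,r))$ canonically.

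Next I would produce the structure maps. Given the factorization product
\ben
\tensor_{i=1}^k \sF\bigl(D(z_i,r_i)\bigr) \to \sF\bigl(D(0,s)\bigr),
\een
I would argue that, as the centers $(z_1,\ldots,z_k)$ vary over $\mathrm{Discs}(r_1,\ldots,r_k)_s$, these maps assemble into a single map out of $\Omega^{0,*}(\mathrm{Discs}(r_1,\ldots,r_k)_s)\tensor\bigl(\tensor_i\sF_{r_i}\bigr)$, landing in $\sF_s$. The mechanism is that the factorization product is itself smooth (indeed holomorphic up to the $\dbar$-homotopy $\eta$) in the disc centers: translation invariance gives the dependence on each $z_i$ through the action of the translation, and the derivation $\eta$ with $\dd\eta=\partial_{\zbar}$ certifies that the antiholomorphic dependence is exact, i.e.\ the maps are \emph{holomorphic} in the homotopical sense. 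This is precisely the data needed to factor the assignment through the Dolbeault forms on the configuration space, yielding the coaction on $\Omega^{0,*}(\mathrm{Discs})$. I would then check associativity of these cooperations against the cooperad composition, which reduces to the coherence of nested factorization products (the commuting triangle recorded in the prefactorization \textbf{Example} with $U_1,U_2\subset V\subset W$) together with the compatibility of the gluing maps on configuration spaces $\mathrm{Discs}$ under insertion.

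The main obstacle I expect is not the combinatorial coherence but the analytic content of turning ``varies smoothly/holomorphically in the centers'' into an honest map of differentiable (nuclear) vector spaces that is compatible with the completed tensor product and the cooperadic structure. Concretely, one must show that the family of factorization maps, indexed by the noncompact complex manifold $\mathrm{Discs}(r_1,\ldots,r_k)_s$, is \emph{smooth} as a map into $\sF_s$ — this is where the differentiable-vector-space framework and the $S^1$-equivariance/holomorphic translation invariance do real work, and where one genuinely invokes the properties of $\dgNuc$ and the homotopy $\eta$. Since the statement is attributed to \cite{CG1}, I would at this point cite the relevant construction there for the analytic verification rather than reprove it, and restrict my argument to exhibiting the structure maps and checking the cooperadic coherences, which follow formally from the prefactorization axioms and translation invariance.
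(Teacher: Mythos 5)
The paper does not actually prove this proposition: it is imported verbatim from \cite{CG1} as a "main technical fact," and the text that follows only unpacks its meaning (the lift of the factorization maps to holomorphic families over ${\rm Discs}(r_1,\ldots,r_k)$). Your sketch correctly reconstructs the mechanism behind the cited result — translation invariance collapses the values on discs to a single object per radius, the derivation $\eta$ with $\dd\eta = \partial_{\zbar}$ converts the antiholomorphic dependence of the structure maps on the centers into the Dolbeault-form refinement, and the cooperadic coherences reduce formally to the prefactorization axioms — and your decision to defer the genuinely analytic point (smoothness of the family of structure maps in the differentiable/nuclear framework) to \cite{CG1} is consistent with how the paper itself handles the statement.
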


This means that at the level of cohomology as we let $p \in {\rm Discs}(r_1,\ldots,r_k)$
vary the factorization maps
\ben
m[p] : {\rm H}^*\sF(D(0,r_1)) \times \cdots \times {\rm H}^*\sF(D(0,r_k)) \to
{\rm H}^*\sF(\bfC)
\een
lift to a map
\ben
\mu_{z_1,\ldots,z_k}^{r_1,\ldots,r_k} : {\rm H}^*\sF(D(z_1,r_1)) \times \cdots \times {\rm
  H}^*\sF(D(z_k,r_k)) \to {\rm Hol}\left({\rm Discs}(r_1,\ldots,r_k),
{\rm H}^*\sF(\bfC)\right) .
\een
Translation invariance allows us to replace $\sF(D(z_i,r_i))
\simeq \sF(D(0,r_i))$ which we denote by $\sF(r_i)$, so we can
write this map as
\ben
\mu_{z_1,\ldots,z_k}^{r_1,\ldots,r_k} : \sF(r_1) \times \cdots \times \sF(r_k) \to {\rm Hol}\left({\rm Discs}(r_1,\ldots,r_k),
{\rm H}^*\sF(\bfC)\right) .
\een
Note that although the source space of this map does not depend on the
centers of the discs, the map itself does, hence the messy notation. 

For $r' < r$ the maps $\mu_{z_1,\ldots,z_k}^{r_1,\ldots,r_k}$ respect the natural inclusions
\ben
{\rm Discs}(r_1,\ldots,r_k) \hookrightarrow {\rm
  Discs}(r'_1,\ldots,r'_k)
\een
and so the limit of the multiplication map as $(r_1,\ldots,r_k) \to (0,\ldots,0)$ makes
sense and has the form
\ben
\mu_{z_1,\ldots,z_k} : \left(\lim_{r \to 0} {\rm H}^*(\sF(r))
\right)^{\tensor k} \to \lim_{r \to 0} {\rm Hol}\left({\rm
  Discs}_k(r), {\rm H}^*(\sF(r)) \right) \cong {\rm Hol}\left({\rm
Conf}_k(\bfC), {\rm H}^*\sF(\bfC)\right)
\een
where ${\rm Conf}_k(\bfC)$ is the ordered configuration space of
$k$-distinct points in $\bfC$. 

The last piece of data we need corresponds to the ``conformal
decomposition'' of a vetex algebra. For us, this will come from an
$S^1$-action on $\sF$. The reader is encouraged to look at
\cite{CG1} for a precise definition, but we assume that we have a {\it
  nice} action of $S^1$ on $\sF$ and it is compatible with the
translation invariance discussed above. 

We can now read off the data of the
vertex algebra from $\sF$:

\begin{itemize}
\item Let $\sF^{(l)}(r) \subset \sF(r)$ be the $l$th eigenspace for the $S^1$-action. The underlying vector
space for the vertex algebra is
\ben
V := \bigoplus_l {\rm H}^*(\sF^{(l)}(r)) .
\een 
\item The translation operator. The action of $\partial_z$ on $\sF^{(l)}(r)$ has the form
\ben
\partial_z : \sF^{(l)}(r) \to 
\sF^{(l-1)}(r) .
\een
We let $T : V \to V$ be the operator which is $\partial_z$ restricted
to the $l$-th eigenspace. 
\item The fields. Consider the map
\ben
\mu_{z,0} : \left(\lim_{r \to 0} {\rm
    H}^*(\sF(r))\right)^{\tensor 2} \to {\rm Hol}\left({\rm
  Conf}_2(\bfC), {\rm H}^*(\sF(\bfC))\right)
\een
defined above. Certainly, we have a map $V
  \to \lim_{r \to 0} {\rm
    H}^*(\sF(r))$, so it makes sense to resctict $\mu_{z,0}$ to a
  map 
\ben
V \tensor V \to {\rm Hol}\left({\rm Conf}_2(\bfC), {\rm
    H}^*(\sF(\bfC))\right) \simeq {\rm Hol}\left(\bfC^\times, {\rm
    H}^*(\sF(\bfC))\right) .
\een
Post composing this with the projection maps $H^*(\sF(\infty)) \to
V_l$ combine to define the map
\ben
\Bar{\mu}_{z,0} : V \tensor V \to \prod_l {\rm Hol}(\bfC^\times, V_l)
\een
We can perform Laurent expansions to view this as
\ben
\Bar{\mu}_{z,0} : V \tensor V \to \Bar{V} \llbracket z^{\pm 1}
\rrbracket .
\een
We define $Y(-,z) : V \to {\rm End}(V)\llbracket z^{\pm 1}\rrbracket$ by
\ben
Y(v,z) v' \; := \; \Bar{\mu}_{z,0}(v,v') .
\een
One can show that this actually lies in $V((z))$ for all $v,v'$. 
\end{itemize}

The above can be made much more precise and made into the following
theorem. 

\begin{theorem}[Theorem 5.2.2.1 \cite{CG1}] \label{fv} Let $\sF$ be a $S^1$-equivariant holomorphically translation invariant factorization algebra on $\bfC$. Suppose
\begin{itemize}
\item The action of $S^1$ on $\sF(r)$ extends smoothly to an action of the algebra of distributions on $S^1$. 
\item For $r < r'$ the map 
\ben
\sF^{(l)}(r) \to \sF^{(l)}(r')
\een
is a quasi-isomorphism.
\item The cohomology ${\rm H}^*(\sF^{(l)}(r))$ vanishes for $l \gg 0$.
\item For each $l$ and $r > 0$ we require that ${\rm H}^*(\sF^{(l)}(r))$ is isomorphic to a countable sequential colimit of finite dimensional vector spaces. 
\end{itemize}
Then $\mathbb{V}{\rm ert} (\sF) := \oplus_l {\rm H}^*(\sF^{(l)}(r))$ (which is independent of $r$ by assumption) has the structure of a vertex algebra.
\end{theorem}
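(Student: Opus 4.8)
The plan is to verify the four vertex algebra axioms directly for the candidate data assembled in the discussion above, leveraging the fact, from the previous proposition, that $\sF$ is an algebra over the $\bfR_{>0}$-colored cooperad $\Omega^{0,*}({\rm Discs})$, together with the $S^1$-equivariance hypotheses. First I would fix that data precisely: the underlying space $V = \bigoplus_l {\rm H}^*(\sF^{(l)}(r))$, which is well-defined and independent of $r$ by the quasi-isomorphism hypothesis on $\sF^{(l)}(r) \to \sF^{(l)}(r')$; the vacuum $\left|0\>$ as the image under $\sF(\emptyset) = \bfC \to \sF(r)$ of the factorization unit $1$; the translation operator $T$ as the weight-lowering map $\partial_z : \sF^{(l)}(r) \to \sF^{(l-1)}(r)$; and the field map $Y(-,z)$ obtained from $\Bar{\mu}_{z,0}$ by Laurent expanding the holomorphic multiplication $\mu_{z,0}$ on ${\rm Conf}_2(\bfC) \cong \bfC^\times$ and projecting onto $S^1$-eigenspaces, exactly as sketched.

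I would dispatch the field and vacuum axioms first, as they are the most direct. The field axiom, that $Y(v,z)v' \in V((z))$, equivalently $A_n^v v' = 0$ for $n \gg 0$, follows from holomorphicity of $\mu_{z,0}$ near $z = 0$ combined with the $S^1$-weight grading: a class of definite weight contributes to $Y(v,z)v'$ through only finitely many Laurent modes because ${\rm H}^*(\sF^{(l)}(r))$ vanishes for $l \gg 0$, which bounds the order of the pole at the origin. The vacuum axiom splits into $Y(\left|0\>,z) = {\rm id}_V$, which is the statement that inserting the unit is the identity structure map of the factorization algebra, and $Y(v,z)\left|0\> \in v + zV\llbracket z \rrbracket$, which says $\mu_{z,0}(v,\left|0\>)$ extends holomorphically across $z = 0$ with value $v$; both are consequences of the cooperad unit.

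The translation axiom $[T,Y(v,z)] = \partial_z Y(v,z)$ is where holomorphic translation invariance enters: translating the insertion point of an operator is implemented at the cochain level by $\partial_z$, and the degree $-1$ derivation $\eta$ with $\dd\eta = \partial_{\Bar{z}}$ ensures this action descends to cohomology and there agrees with differentiating the holomorphic family $z \mapsto \mu_{z,0}$ in the parameter $z$. The equality $T\left|0\> = 0$ follows from translation invariance of the unit.

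The hard part will be locality. The key input is associativity and graded commutativity of the factorization product for three insertions at $0, z, w$: the Weiss-cosheaf/descent property identifies the two iterated products — expand $v$ near $0$ and then $v'$, versus the opposite order — as Laurent expansions, in two different annular regions, of one and the same holomorphic function on ${\rm Conf}_3(\bfC)$. Thus $[Y(v,z),Y(v',w)]$ measures the discrepancy between expanding a single holomorphic function, whose only singularities lie along the diagonal $z = w$, in the regions $|z| > |w|$ and $|w| > |z|$; such a discrepancy is annihilated by $(z-w)^N$ once $N$ clears the pole, and the weight-vanishing hypothesis bounds $N$ uniformly. Making this rigorous demands careful bookkeeping of the colored-cooperad composition and the interchange of the limits $r \to 0$, and is the principal technical obstacle. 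Once locality is established all four axioms hold, so the reconstruction theorem (Theorem 2.3.11 of \cite{FBZ}) is not required for the general statement, though it offers a shorter route whenever a distinguished generating set of fields is available.
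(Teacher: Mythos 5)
First, a point of orientation: the paper does not prove this statement at all --- it is quoted as Theorem 5.2.2.1 of \cite{CG1}, and the text surrounding it only sketches how the data $(V, |0\rangle, T, Y)$ is extracted from $\sF$. So the relevant comparison is against the proof in Costello--Gwilliam, and your sketch does track that proof: the assembly of the data, the use of the $\Omega^{0,*}({\rm Discs})$-cooperad structure from the preceding proposition, and the locality argument --- the two iterated products are Laurent expansions, in the regimes $|z|>|w|$ and $|w|>|z|$, of one holomorphic function on configuration space with poles only along the diagonal --- are exactly the ingredients there. Your observation that the reconstruction theorem of \cite{FBZ} is not needed for the general statement is also correct; the paper only uses reconstruction later, to identify the specific vertex algebra $\VVert(\sVir)$.

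The genuine gap is that two of the four hypotheses never enter your argument, and they are not decorative. The hypothesis that the $S^1$-action extends to the algebra of distributions on $S^1$ is what makes the eigenspace projections $\sF(r) \to \sF^{(l)}(r)$ exist (one averages against Fourier modes); without it you cannot say that the Laurent coefficients of $\mu_{z,0}(v,v')$ land in the weight spaces of $V$, so the map $Y(-,z) : V \to {\rm End}(V)\llbracket z^{\pm 1}\rrbracket$ is not even defined. The countable-colimit hypothesis is what controls the analysis in the category of differentiable vector spaces: it is needed so that taking cohomology, forming the weight decomposition, and passing to the $r \to 0$ limit interact correctly, i.e.\ so that elements of ${\rm Hol}\left(\bfC^\times, {\rm H}^*\sF(\bfC)\right)$ genuinely expand into the algebraic direct sum $V$ rather than some completion of it. Your sketch silently assumes both. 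A smaller misattribution: locality does not require the Weiss cosheaf/descent axiom, only the coherence (associativity) of the prefactorization structure maps --- which is precisely why the paper can define ${\rm PreFact}^{\rm hol}_\bfC$ as a subcategory of \emph{pre}factorization algebras and still obtain the functor $\VVert$.
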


Let ${\rm PreFact}_\bfC$ denote the category of prefactorization
algebras on $\bfC$. Let ${\rm PreFact}^{\rm hol}_\bfC \subset {\rm
  PreFact}_\bfC$ be the full subcategory spanned by prefactorization
algebras satisfying the conditions of the above theorem. This result
can be upgraded to provide a functor
\ben
\mathbb{V}{\rm ert} : {\rm PreFact}^{\rm hol}_\bfC \to {\rm Vert}
\een
where ${\rm Vert}$ is the category of vertex algebras. 

\subsection{Verifying the axioms}
In this section we verify the Virasoro factorization algebra $\sVir$
indeed satisfies the conditions of Theorem \ref{fv} necessary to determine a vertex
algebra stated in the last section. 


Explicitly, we show the following:
\begin{itemize}
\item[(1)] There is a $S^1$-action on $\sVir$ covering the
  action of $\bfC^\times$ by rotations. Moreover, for all $r > 0$ (including $r = \infty$) the $S^1$-action
  on $\sVir(r)$ extends to an action of $\mathcal{D}(S^1)$
  the space of smooth distributions on the circle. 
\item[(2)] Then for all $l$ and all $r < r'$ the
  natural map
\ben
\sVir^{(l)}(r) \to \sVir^{(l)}(r')
\een 
is an equivalence.
\item[(3)] ${\rm H}^*(\sVir^{(l)}(r)) = 0$ for $l \gg 0$.   
\item[(4)] The space ${\rm H}^*(\sVir^{(l)}(r))$ is a colimit of
  finite dimensional vector spaces for all $l,r$. 
\end{itemize}
The first condition is clear: the $S^1$-action comes from its
natural action on $\Omega^{0,*}_c(\bfC)$. We extend this to
distriutions $\varphi \in \mathcal{D}(S^1)$ by the rule
\ben
(\varphi \cdot \alpha)(z) = \int_{t \in S^1} \varphi(t) \alpha(t z)
\een
where $\alpha \in \Omega^{0,*}_c(\bfC)$. This extends naturally to
vector fields. 

Let's consider (2). For simplicity we work with the
(untwisted) factorization algebra $\sVir_0 = C_*(\scr{L}_\bfC)$, the
twisted case is similar. Consider the filtration of
$\sVir_0$ by symmetric tensor degree. Namely
\ben
F^m \sVir_0(r) = {\rm Sym}^{\leq m} (\scr{L}(D(0,r)[1]) = \bigoplus_{j \leq
  m} \left(\scr{L}(D(0,r))[1]^{\tensor j}\right)_{\Sigma_j} .
\een
The associated graded of this filtration is
\ben
{\rm Gr}^m \sVir_0(r) = \left(\scr{L}(D(0,r))[1]^{\tensor
    m}\right)_{\Sigma_m} 
\een
and there is a spectral sequence
\ben
{\rm H}^*({\rm Gr}^*\sVir_0(r)) \Rightarrow {\rm H}^*(\sVir_0(r)) .
\een
The filtration respects the $S^1$-action, so for each $l$ we get a
spectral sequence for the eigenspaces
\ben
{\rm H}^*({\rm Gr}^*\sVir_0^{(l)}(r)) \Rightarrow {\rm H}^*(\sVir_0^{(l)}(r)) .
\een
Thus, to verify that $\sVir_0^{(l)}(r) \to \sVir_0^{(l)}(s)$ is an equivalence
for $r < s$ it it enough to show that it is at the level of associated
gradeds. That is, we need to show that the restriction of the map
\ben
\Omega_c^{0,*}\left(D(0,r)^{m}, TD(0,r)^{\boxtimes m}\right) \to
\Omega_c^{0,*}\left(D(0,s)^{m}, TD(0,s)^{\boxtimes m}\right)
\een
to the $l$-eigenspaces is an equivalence. Again, we recall Serre's
result that for any open $U\subset \bfC$
\ben
{\rm H}_{\dbar}^*\left(\Omega^{0,*}_c(U,TU) \right) \; \cong \;
\left(\Omega_{\rm hol}^1(U,TU) \right)^\vee
\een
concentrated in degree $0$. When $U = D(0,r)$ we have a coordinization
\ben
\Omega^1_{\rm hol}(D(0,r)) = \bfC[z] \dd z .
\een
Now, $z^k$ has $S^1$-weight $k$. Thus $(z^k)^\vee$ has weight
$-k$. The weight of $(\dd z)^\vee$ is $-1$ and the weight of
$\partial_z^\vee$ is $+1$. This shows that the weight spaces are
independent of the radius chosen, so we have verified (2). Moreover, the weight spaces are
clearly finite dimensional and vanish for $m \geq 0$, so we also get
(3) and (4). 

Finally, Theorem \ref{fv} implies the following. 

\begin{prop} The $\bfC[c]$-module $V = \bigoplus_l {\rm
    H}^*(\sVir^{(l)}(r))$ has the
  structure of the vertex algebra (in $\bfC[c]$-modules) induced from the factorization
  structure on $\sVir$. In particular, for each $c \in \bfC$ the
  vector space $V_c = \bigoplus_l {\rm H}^*(\sVir_c^{(l)}(r))$ has the
  structure of a vertex algebra. 
\end{prop}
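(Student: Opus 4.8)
The plan is to deduce the proposition directly from Theorem~\ref{fv}, since the four hypotheses listed there as conditions (1)--(4) have already been verified in the preceding discussion. Before invoking the theorem, however, one must also check its standing assumption that $\sVir$ is an $S^1$-equivariant \emph{holomorphically translation invariant} factorization algebra. The $S^1$-equivariance is exactly the content of condition (1). Translation invariance is immediate from the construction: $\scr{L}^\bfC = \Omega^{0,*}(\bfC, T\bfC)$ is built from the standard coordinate, and the cocycle is manifestly invariant under $z \mapsto z + a$ since it is assembled from $\partial_z$ and Lebesgue integration. So the first thing I would do is pin down the holomorphicity datum, namely a degree $-1$ derivation $\eta$ with $\dd \eta = \partial_{\Bar z}$.

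For this I would take $\eta$ to be induced from the contraction $\iota_{\partial_{\Bar z}}$ of Dolbeault forms against the antiholomorphic vector field $\partial_{\Bar z}$: on $\scr{L}^\bfC$ this sends $\alpha_0 \tensor \partial_z \mapsto 0$ and $\alpha_1 \dd\Bar z \tensor \partial_z \mapsto \alpha_1 \tensor \partial_z$, a degree $-1$ operator. Cartan's homotopy formula gives $[\dbar, \iota_{\partial_{\Bar z}}] = \partial_{\Bar z}$ on the underlying cochain complex. I would then extend $\eta$ to the factorization envelope $C_*(\scr{L}^\bfC_c)$ as a graded derivation of the symmetric algebra.

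The main obstacle is verifying that this $\eta$ remains a contracting homotopy for $\partial_{\Bar z}$ on the \emph{full} twisted complex, i.e. that $[\dd_{\scr{L}} + \dd_{\rm CE} + \omega, \, \eta] = \partial_{\Bar z}$ as derivations of $\sVir$. The $\dd_{\scr{L}}$-term contributes $\partial_{\Bar z}$ by the Cartan formula above; what must be checked is that the commutators with the Chevalley--Eilenberg differential and with the cocycle term $\omega$ cancel. This reduces to the statement that $\iota_{\partial_{\Bar z}}$ is compatible with the Lie bracket and that $\omega$ is itself holomorphically translation invariant, which one reads off from its explicit formula in Section~\ref{sec shiftext}: the action of $\partial_{\Bar z}$ on $\omega$ is trivialized by $\iota_{\partial_{\Bar z}}$ because $\omega$ involves only the holomorphic derivatives $\partial_z^3$ paired against the two form-components $\alpha_0, \alpha_1$. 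This is a direct, if slightly tedious, calculation.

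With holomorphic translation invariance in hand, Theorem~\ref{fv} applies and produces a vertex algebra structure on $V = \bigoplus_l {\rm H}^*(\sVir^{(l)}(r))$. For the $\bfC[c]$-linear refinement, I would observe that every structure used---the cocycle $c\omega$, the $S^1$-action, the derivation $\eta$, and the factorization product---is $\bfC[c]$-linear, with $c$ entering only linearly through $c\omega$; hence the entire output of the construction is a vertex algebra internal to $\bfC[c]$-modules. Finally, specializing $c = c_0 \in \bfC$ replaces $c\omega$ by $c_0\omega$ and yields the factorization algebra $\sVir_{c_0}$ in $\dgNuc$; since conditions (1)--(4) and holomorphic translation invariance are all phrased in terms of eigenspaces, quasi-isomorphisms, and finiteness that are preserved under this specialization, Theorem~\ref{fv} again applies and endows each $V_{c_0}$ with a vertex algebra structure.
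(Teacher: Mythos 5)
Your proposal is organized the same way as the paper's argument---check the hypotheses of Theorem \ref{fv} and then invoke it---but it omits the part that carries essentially all of the mathematical weight. The assertion that ``conditions (1)--(4) have already been verified in the preceding discussion'' is precisely the content of the paper's own proof, so a blind argument cannot simply cite it. Concretely, what is missing is: (i) the extension of the $S^1$-action on $\sVir(r)$ to the algebra of distributions $\mathcal{D}(S^1)$, given by $(\varphi \cdot \alpha)(z) = \int_{t \in S^1} \varphi(t)\, \alpha(tz)$; and, more substantially, (ii) the proof that for $r < r'$ the map $\sVir^{(l)}(r) \to \sVir^{(l)}(r')$ is a quasi-isomorphism, together with the vanishing of ${\rm H}^*(\sVir^{(l)}(r))$ for $l \gg 0$ and the finiteness condition. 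The paper establishes (ii) by filtering $\sVir(r)$ by symmetric power, noting the filtration is $S^1$-equivariant, and passing to the associated spectral sequence, so that everything reduces to the associated graded; there Serre's computation ${\rm H}^*_{\dbar}\left(\Omega^{0,*}_c(D,TD)\right) \cong \Omega^1_{\rm hol}(D,TD)^\vee$ (concentrated in a single degree) and the coordinatization $\Omega^1_{\rm hol}(D(0,r)) = \bfC[z]\,\dd z$ let one read off the $S^1$-weight spaces: $z^k$ has weight $k$, so the dual weight spaces are finite dimensional, vanish for large weight, and are visibly independent of the radius. Without some version of this computation your proposal does not prove the proposition; it only reduces the proposition to itself.

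That said, the one verification you do carry out---constructing $\eta = \iota_{\partial_{\Bar{z}}}$, checking the Cartan identity $[\dbar, \iota_{\partial_{\Bar{z}}}] = \partial_{\Bar{z}}$ on $\Omega^{0,*}$, and checking that $\eta$ commutes with $\dd_{\rm CE}$ and with the operator $\omega$ (here one uses that $\omega(\iota_{\partial_{\Bar{z}}}\alpha, \beta)$ and $\omega(\alpha, \iota_{\partial_{\Bar{z}}}\beta)$ coincide, so they cancel against the Koszul sign in the derivation expansion)---is correct, and it addresses a standing hypothesis of Theorem \ref{fv}, namely holomorphic translation invariance, that the paper never verifies explicitly. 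Likewise your observation that the entire construction is $\bfC[c]$-linear, with $c$ entering only through $c\omega$, so that the output is a vertex algebra in $\bfC[c]$-modules compatible with specialization at each $c_0 \in \bfC$, is a reasonable way to obtain the last sentence of the statement. So your proposal is complementary to, rather than a replacement for, the paper's proof: it supplies the equivariance and holomorphicity bookkeeping that the paper leaves implicit, but it omits the cohomological computations behind conditions (2)--(4), which are the actual point.
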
 

\subsection{An isomorphism of vertex algebras}
The map $\Phi : U({\rm Vir}) \to {\rm
  H}^*(\sVir(A(r,r')))$ from Proposition \ref{phi} applied to the
interval $I = (r,r')$ gives $V$ the structure of a $U({\rm
  Vir})$-module. More precisely, let $\epsilon < r < R$ then we have a
factorization map
\ben
\sVir(D(0,\epsilon)) \tensor \sVir(A(r,R)) \to
\sVir(D(0,R)) .
\een
We have the following diagram
\ben
\xym{
{\rm H}^*\sVir(D(0,\epsilon)) \tensor {\rm H}^* \sVir(A(r,R))
\ar[r] & {\rm H}^*\sVir(D(0,R)) \\
V \tensor {\rm H}^*\sVir(A(r,R)) \ar[u] & V \ar[u] \\
V \tensor U({\rm Vir}) \ar[u]^-{1 \tensor \Phi_c} \ar@{.>}[ur]  .
}
\een
The top left arrow comes from the inclusion $V \hookrightarrow {\rm
  H}^*\left(\sVir(D(0,\epsilon) )\right)$. The dotted map exists since the
image of the factorization product on $V$, where we only see
finite sums of $S^1$-eigenvectors, still only contains finite sums of
$S^1$-eigenvectors. 

Our main result is:

\begin{theorem} There is a $\bfC[c]$-linear isomorphism of $U({\rm
    Vir})$-modules $\Psi :
  {\bf Vir} \to V$ which sends $\left| 0 \> \in {\bf Vir}_c$ to $1
    \in V$. It extends to an isomorphism of vertex algebras
\ben
\xymatrix{
\Psi : {\bf Vir} \ar[r]^-{\cong} & \VVert(\sVir)
}
\een
over the ring $\bfC[c]$. In particular, when we specialize to a $c \in \bfC$ we
    obtain an isomorphism of vertex algebras
\ben \xymatrix{
\Psi_c : {\bf Vir}_c \ar[r]^-{\cong} & \VVert(\sVir_c) .}
\een
\end{theorem}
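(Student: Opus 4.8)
The plan is to deduce the theorem from the reconstruction theorem (Theorem 2.3.11 of \cite{FBZ}) together with the module-level comparison supplied by Proposition \ref{phi}. The argument proceeds in two stages: first produce the $\bfC[c]$-linear isomorphism $\Psi$ of $U(\Vir)$-modules, and then verify that the generating field and translation operator agree under $\Psi$, so that the two vertex algebras, each determined by this data via reconstruction, are identified.

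First I would build $\Psi$ using the universal property of induction. The diagram at the end of the previous subsection endows $V = \bigoplus_l {\rm H}^*(\sVir^{(l)}(r))$ with a $U(\Vir)$-module structure in which the unit maps to $1 \in V$. The key observation is that $1$ is annihilated by the subalgebra $U_+$: for $n \geq -1$ the vector field $z^{n+1}\partial_z$ extends holomorphically over the disc, so by Serre's lemma its lift represents the zero class in ${\rm H}^1_c(D(0,\epsilon))$, giving $L_n \cdot 1 = 0$. By the universal property this produces a map of $U(\Vir)$-modules
\ben
\Psi : {\bf Vir} = U(\Vir) \tensor_{U_+} \bfC_c \to V , \qquad u \tensor 1 \mapsto u \cdot 1 ,
\een
sending the vacuum to the vacuum. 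Surjectivity is exactly the spanning condition already checked for reconstruction, namely that $V$ is generated from $1$ by negative modes. For injectivity I would compare $S^1$-weights: the computation in Section \ref{secvert} identifies the weight-graded dimension of $V$ with that of the PBW basis $\{L_{-j_1}\cdots L_{-j_m} 1 : j_1 \geq \cdots \geq j_m \geq 2\}$ of the induced module, so source and target have equal finite-dimensional weight pieces and a graded surjection between them is an isomorphism.

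With $\Psi$ an isomorphism of modules, I would upgrade it to a vertex-algebra map by matching generators under reconstruction. The translation operators agree: $T$ on $\VVert(\sVir)$ is $\partial_z$ and on ${\bf Vir}$ it is $L_{-1} = \partial_z$, and these are intertwined by $\Phi$ and kill the vacuum. The remaining point is the conformal field, i.e. to show that the vertex operator extracted from the factorization structure at the state $\Psi(L_{-2} 1)$ is
\ben
Y(\Psi(L_{-2} 1), z) = \sum_{n \in \bfZ} L_n z^{-n-2} = L(z) .
\een
Once this single generating field is matched, the uniqueness clause of the reconstruction theorem forces $\Psi$ to be an isomorphism of vertex algebras, and specialization at $c = c_0$ is then immediate, yielding $\Psi_{c_0}$.

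The main obstacle is precisely this field identification. It requires relating the two-point factorization product $\Bar{\mu}_{z,0}$ of Section \ref{secvert} to the radially ordered products of the lifts $L_n(A)$ computed in Proposition \ref{phi}: one must show that expanding $\Bar{\mu}_{z,0}(\Psi(L_{-2}1), -)$ as a Laurent series in $z$ reproduces the operators $L_n$ as its coefficients, with the power $z^{-n-2}$ fixed by the $S^1$-weight bookkeeping ($L_n$ carries weight $n$ and the generating state weight $-2$). The bracket relation of Proposition \ref{phi} guarantees that these $L_n$ obey the Virasoro relations, and holomorphic translation invariance guarantees that the expansion lands in $V((z))$; assembling these facts into the exact equality of the displayed field, and in particular tracking the central term $\tfrac{m^3-m}{12}\delta_{n,-m} c$ through the mode expansion, is the technical heart of the argument.
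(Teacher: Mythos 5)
Your architecture mirrors the paper's own proof (vacuum annihilation, induced module map, associated comparison, then field matching), and your annihilation step is correct: it is the conceptual form of the paper's explicit primitive $h(z,\zbar)z^{n+1}\partial_z$ with $h = \int_{z\zbar}^{\infty} f(s)\,\dd s$. But the proof of the module isomorphism has a genuine hole at surjectivity. You justify it by appeal to ``the spanning condition already checked for reconstruction,'' yet no such condition was ever verified for $V$: the vertex algebra structure on $V$ comes from Theorem \ref{fv} (Costello--Gwilliam), which has no spanning hypothesis, and the FBZ reconstruction theorem was invoked only to construct ${\bf Vir}$ itself, not $V$. Your weight-space dimension count cannot close this hole on its own: a graded map between graded vector spaces with equal finite-dimensional pieces need not be injective or surjective (the zero map is such a map), so ``equal weight-graded dimensions plus a graded surjection'' is circular until surjectivity is actually produced. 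The paper fills exactly this gap with a filtration argument: filter $U({\rm Vir})$ by tensor length and $\sVir$ by symmetric power, and identify the induced map ${\rm Gr}\,U({\rm Vir}) = {\rm Sym}(\bfC[z,z^{-1}]\partial_z)[C] \to {\rm Gr}\,V \cong {\rm Sym}(z^{-1}\bfC[z^{-1}]\partial_z)[C]$, via the $S^1$-eigenspace/Serre duality computation, as the map induced by the projection $\bfC[z,z^{-1}]\partial_z \to z^{-1}\bfC[z^{-1}]\partial_z$. From this one reads off that ${\rm Gr}\,{\bf Vir} \to {\rm Gr}\,V$ is an isomorphism, and the $\bfC[c]$-linear module isomorphism follows since there are no extension problems. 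Some identification of leading terms of this kind --- equivalently, that images of PBW monomials have linearly independent leading Sym-components --- is unavoidable, and your proposal never makes it.

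The second gap is the field identification, which you rightly isolate as the technical heart but then leave entirely unproved: you describe what must be shown rather than showing it. The theorem is not established until one proves $\Bar{\mu}_{z,0}(\Psi(L_{-2}1), v) = \sum_{n\in\bfZ} (L_n \cdot v)\, z^{-n-2}$; only then does the uniqueness clause of reconstruction apply. The paper's argument here is short and concrete, and you would need it or an equivalent: by the residue pairing, the class $L_n(A(r,R))$ is the functional $h(z)\,\dd z \mapsto \oint z^{n+1}h(z)\,\dd z$ on $\Omega^1_{\rm hol}(A(r,R))$; for $z_0$ in the annulus, Cauchy's formula with the two contours $|\zeta| = R-\epsilon$ and $|\zeta| = r+\epsilon$, together with the geometric expansions of $1/(\zeta - z_0)$ in $z_0/\zeta$ and in $\zeta/z_0$ respectively, yields $2\pi i\, h(z_0) = \sum_{n\in\bfZ}\left(\oint \zeta^{n+1}h(\zeta)\,\dd\zeta\right) z_0^{-n-2}$, which is precisely the statement that the two-point factorization product Laurent-expands into the modes $L_n$ with exponents $z^{-n-2}$. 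Note also that the central term requires no separate tracking at this stage: the Virasoro commutation relations among the modes, central term included, were already established in Proposition \ref{phi}, so the contour computation is the only missing ingredient.
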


\begin{proof} Recall that the vacuum vector is the image of $1$ under the map 
\ben
\xym{
U ({\rm Vir}) \ar[r]^{\id \tensor 1} & U({\rm Vir}) \tensor \bfC
\ar[r] & U({\rm Vir})
\tensor_{U({\rm Vir})_+} \bfC_c = {\bf Vir}_c .}
\een
We define the map of $U({\rm Vir})$-modules
\ben
U({\rm Vir}) \tensor \bfC \to V
\een
by sending $1 \tensor 1$ to $1$ and extending by $U({\rm
  Vir})$-linearity. We need to check that this descends to ${\bf
  Vir}_c$. That is, we verify that
  $1 \in V$ is killed by $L_n$ for $n \geq -1$. Recall $L_n(A) = f(z\Bar{z}) z^{n+2}
  \dd \Bar{z} \partial$ is a representative for $L_n$ on
  $\scr{L}^\bfC(A(r,R))$ where $f(z \Bar{z})$ is a bump function as
  above. It suffices to show that $L_n(A)$ is exact when viewed as an
  element in $\scr{L}^\bfC(D(0,R))$. Define $h(z,\Bar{z}) := \int_{z
    \Bar{z}}^\infty f(s) \; \dd s$ and note that the chain rule implies
\ben
 \Bar{\partial}(h(z,\Bar{z}) \; z^{n+1}) \; = \; f(z \Bar{z}) z^{n+2} \dd \Bar{z} 
\een 
Thus, $L_n(A)$ is exact via the element $h(z,\Bar{z})
z^{n+1} \partial$. This shows that we get a well-defined map ${\bf
  Vir}_c \to V$ that sends $\left|0 \right> \mapsto 1$. 

We need to see that this map is an isomorphism of $U({\rm
  Vir})$-modules. We take advantage of some filtrations. Consider the natural filtration of the tensor
algebra of ${\rm Vir}$, namely
\ben
F^i({\rm Tens}({\rm Vir})) := \tensor_{j \geq
  i} {\rm Vir} .
\een
This descends to a filtration on $U({\rm Vir})$. Similarly, define the filtration of
$\sVir$ by
\ben
F^i \sVir(U) = {\rm Sym}^{\leq i}
(\Omega^{0,*}_c(U,TU)[1] \oplus \bfC \cdot C)
\een
with induced differential. It is clear that this is a subcomplex and
hence descends to cohomology. Moreover the map of modules defined
above respects both of these filtrations. 

With respect to the above filtration we have the identification
\ben
{\rm Gr} \; {\bf Vir} \cong {\rm Sym}^*(z^{-1}\bfC[z^{-1}]\partial_z \oplus \bfC \cdot C) =
{\rm Sym}^*(z^{-1}\bfC[z^{-1}]\partial_z)[C] .
\een

Moreover, we have the identifications of associated gradeds
\ben
{\rm Gr} \; U({\rm Vir}) \; = \; {\rm Sym}({\rm Vir}) = {\rm
  Sym}(\bfC[z,z^{-1}]\partial_z)[C] \;\; {\rm and}
\;\; {\rm Gr} \; \sVir(U) = \Hat{\rm Sym}
(\Omega^{0,*}_c(U,TU) [1] \oplus \bfC \cdot C) .
\een
Consider the map $U({\rm Vir}) \to V$ induced by the action of $U({\rm
  Vir})$ on the unit $1 \in V$. We have the diagram of associated gradeds
\ben
\xym{
{\rm Gr} \; U({\rm Vir}) \ar[r]\ar@{=}[d] & {\rm Gr} \; V \ar@{^{(}->}[r] & {\rm
  H}^*\left({\rm Sym}^*(\Omega^{0,*}_c(D(0,r),TD(0,r))[1] \oplus \bfC
  \cdot C) \right) \\
 {\rm
  Sym}(\bfC[z,z^{-1}]\partial_z)[C] \ar[ur] & 
}
\een
The embedding on the right is the direct sum of $S^1$-eigenspaces and
is identified with $z^{-1}\bfC[z^{-1}][C]$. Thus, the map ${\rm Gr} \;
U({\rm Vir}) \to {\rm Gr} \; V$ is the map of commutative algebras
\ben
 {\rm
  Sym}(\bfC[z,z^{-1}]\partial_z)[C] \to  {\rm
  Sym}(z^{-1}\bfC[z^{-1}]\partial_z)[C]
\een
and is induced by natural map $\bfC[z,z^{-1}] \to z^{-1}
\bfC[z^{-1}]$. 

Concluding we see that the map ${\rm Gr} \; {\bf Vir} \to {\rm Gr}
\; V$ is an isomorphism, and since there are no extension problems
over $\bfC[c]$ we have the desired isomorphism of $U({\rm
  Vir})$-modules. 

Finally, we need to show that the OPE's agree so that the module
isomorphism extends to an isomorphism of vertex algebras. Namely, we will
show
\ben
m_{z,0}(L_{-2}\cdot 1, v) \; = \; \sum_ {n \in \bfZ} (L_n \cdot v) z^{-n-2} .
\een
Now, the residue pairing allows us to represent $L_n(A(r,R))$ by the
linear map
\ben
\Omega_{\rm hol}^1(A(r,R)) \to \bfC \;\; , \;\; h(z) \dd z \mapsto
\left(\oint_{S^1} z^{n+1} h(z) \dd z \right)  .
\een

Fix a point $z_0 \in A(r,R)$. By Cauchy's theorem we have for some $\epsilon$ such that $\epsilon <
|z_0| - r$ and $\epsilon < s - |z_0|$:  
\ben
2 \pi i h(z_0) = \oint_{|\zeta| = R-\epsilon} \frac{h(\zeta)}{\zeta-z_0} \dd \zeta -
\oint_{|\zeta| = r + \epsilon} \frac{h(\zeta)}{\zeta-z_0} \dd \zeta .
\een
For the first integral we have $|z_0| < |\zeta|$ and we can expand
\ben
\frac{1}{\zeta-z_0} = \frac{1}{\zeta} \cdot \frac{1}{1-\frac{z_0}{\zeta}} =
\frac{1}{\zeta} \sum_{j=0}^\infty \left(\frac{z_0}{\zeta}\right)^j =
\sum_{j=0}^\infty z_0^j \zeta^{-j-1} .
\een
Thus 
\ben
\oint_{|\zeta| = R-\epsilon} \frac{h(\zeta)}{\zeta-z_0} \dd \zeta = \sum_{j=0}^\infty
\left(\oint_{|\zeta| = R- \epsilon} h(\zeta) \zeta^{-j-1} \dd \zeta\right) z_0^j .
\een
Similarly the second integral can be written as
\ben
\oint_{|\zeta| = r+\epsilon} \frac{h(\zeta)}{\zeta-z_0} \dd \zeta = -\sum_{j=0}^\infty
\left(\oint_{|\zeta|=r+\epsilon} h(\zeta) \zeta^j\right)z_0^{-j-1} .
\een
Since $h$ is holomorphic on $A(r,R)$ we can combine these integrals
by choosing a common contour and reindexing to write
\ben
\sum_{j=0}^\infty
\left(\oint_{|\zeta| = R- \epsilon} h(\zeta) \zeta^{-j-1} \dd \zeta\right) z_0^j  + \sum_{j=0}^\infty
\left(\oint_{|\zeta|=r+\epsilon} h(\zeta) \zeta^j\right)z_0^{-j-1} = \sum_{n \in
  \bbZ} \left(\oint \zeta^{n+1}  h(\zeta) \dd \zeta\right) z_0^{-n-2}  .
\een
This completes the proof. 
\end{proof}

\section{Universal factorization algebras and the Virasoro} \label{global}
It is a natural to extend the Virasoro factorization algebra to
general one-dimensional complex manifolds. Moreover, from the point of
view of of conformal field theory, \cite{Segal} for instance, it is
essential to consider this a global version of the Virasoro algebra
defined on general Riemann surfaces. Vertex algebras are of course
local in nature, from above they correspond to factorization on
$\bfC$. In this section we transition to studying a version of the
Virasoro factorization algebra defined on a general one-dimensional
complex manifold. 

One approach would be to construct a factorization algebra on each
manifold independently. It is convenient for us, however, to consider
the {\it site} of complex manifolds. Define the category ${\rm
Hol}_1$ whose objects are one-dimensional complex manifolds and whose maps are holomorphic embeddings. This is a
symmetric monoidal category with respect to disjoint union
$\sqcup$. Just as in the case of a fixed manifold, Weiss covers define
a Grothendieck topology on ${\rm Hol}_1$. 

\begin{defn}\label{defn universal}
A {\em universal} holomorphic prefactorization algebra (valued in the
category $\dgNuc^{\tensor}$) is a symmetric monoidal functor
\ben
{\rm Hol}_1^{\sqcup} \to \dgNuc^{\tensor} .
\een 
A {\em universal} holomorphic factorization algebra is a universal
holomorphic prefactorization algebra satisfying descent for Weiss
covers.
\end{defn}

\begin{remark} The term {\em universal} has appeared in the literature of
  vertex algebras and their close relatives, chiral algebras and we'd
  like to point out how our terminology is different. In Section 3.4.14 of \cite{BD} the term {\em universal chiral
  algebra} is used to refer to chiral algebras that are valued in the category of
modules for the Harish-Chandra pair $({\rm Aut}(\Hat{D}), {\rm W}_1)$ of
formal automorphisms and formal derivations of the holomorphic
disk. In Section 6.3 of \cite{FBZ} such a structure in
the category of vertex algebras is referred to as a {\em quasi-conformal}
vertex algebra. We stress that this is different than the notion of
universal considered in Definition \ref{defn universal}.
One can also realize the analog of a quasi-conformal structure in the setting of holomorphic
factorization algebras by factorization algebras valued in the
category of $({\rm Aut}(\Hat{D}), {\rm W}_1)$ modules. 
\end{remark}

We can produce such universal holomorphic factorization algebras from sheaves of Lie
algebras on the site ${\rm Hol}_1$. Indeed, given a sheaf of Lie
algebras $\scr{G}$ we can apply the Chevalley-Eilenberg chains functor
applied to compactly supported sections
$C_*(\scr{G}_c)$ to get a universal factorization algebra. Moreover,
this functor satisfies descent so that it defines a universal
holomorphic factorization algebra. We will denote this universal
factorization algebra by $U^{fact} \scr{G}$. 

\begin{example} 
Let us consider a fundamental example of a universal holomorphic
factorization algebra. Fix an ordinary Lie algebra $\fg$ and define the sheaf of Lie algebras on
${\rm Hol}_1$ by sending the complex one-manifold $\Sigma$ to the dg
Lie algebra $\fg^\Sigma := \Omega^{0,*}(\Sigma ; \fg)$. The differential is given by $\dbar \tensor 1_{\fg}$ and the Lie
bracket extends that of $\fg$. In doing so, one obtains the universal
factorization algebra $U^{fact} \fg^{(-)}$ that sends $\Sigma \mapsto C_*(\fg^\Sigma)$. If $\fg$ has a invariant pairing
$\<-,-\>_\fg$ one can use the cocycle on $\fg^\Sigma$
defined by 
\ben
(\alpha,\beta) \mapsto \int_\Sigma \<\alpha \wedge \partial \beta\>_\fg
\een
to define a central extension $\Hat{\fg}^\Sigma$. One obtains
a universal factorization algebra via 
\ben
U^{fact} \Hat{\fg} : \Sigma \mapsto C_*(\Hat{\fg}^\Sigma) .
\een
This is the unverisal factorization algebra representing the Kac-Moody
vertex algebra, see Chapter 5 of \cite{CG1}.
\end{example}

We will produce the universal
Virasoro factorization algebra in the same manner. Indeed, for each
$\Sigma$ in ${\rm Hol}_1$ we
have the dg Lie algebra 
\ben
\scr{L}^\Sigma = \Omega^{0,*}(\Sigma,T_\Sigma)
\een 
with differential given by $\dbar$ and bracket given by extending the
usual Lie bracket of holomorphic vector fields. The assignment
$\scr{L} : \Sigma \mapsto \scr{L}^\Sigma$ defines a symmetric monoidal
functor from the category ${\rm Hol}_1$ to the category of dg Lie
algebras with symmetric monoidal structure given by direct sum (of
underlying graded vector spaces). As the functor of
Chevalley-Eilenberg chains $C_*(-)$ is symmetric monoidal we get a symmetric monoial functor given by the {\it universal} envelope of $\scr{L}$
\ben
U^{fact}\scr{L} : {\rm Hol}_1 \to \dgNuc^{\tensor} \;\; , \;\; \Sigma \mapsto C_*(\scr{L}^\Sigma_c) .
\een
Applied to $\Sigma = \bfC$, of course we are in the situation of the
previous portion of the paper. 

The interesting part from the point of view of conformal field theory
and representation theory is the envelope of a {\it central extension}
of the sheaf of dg Lie alegbras $\scr{L}$. There is a potential
problem defining this central extension based on our formula given in
Section 2. Indeed, the cocycle on $\scr{L}^\bfC$
\ben
\omega(\alpha \tensor \partial_z, \beta \tensor \partial_z) = \frac{1}{2\pi}\frac{1}{12}\int_U \left(
  \partial_z^3 \alpha_0 \beta_1 +
  \partial_z^3\alpha_1 \beta_0
\right) \d^2 z 
\een
clearly depends on the choice of a coordinate (its failure to be
coordinate independent is precisely measured by the Schwarzian). Thus, there is no obvious way of
constructing a universal twisted envelope on all holomorphic
one-manifolds simultaneously. 

\subsection{First fix: uniformization}

A Riemann surface is a complex manifold of dimension one. Therefore,
it is given by a covering $\{U_i\}$ such that all transition functions
are holomorphic diffeomorphisms. The cocycle $\omega$ is not invariant
under arbitrary diffeomorphisms: if $w = f(z)$ it is not necessarily true that $f^* (\omega_z) = \omega_w$. 

One way of formulating the uniformization theorem for Riemann surfaces is that one can always find a subordinate cover to $\{U_i\}$ such
that the transition functions have the form
\ben
w = f(z) = \frac{a z + b}{c z + d}
\een
with $ad - bc \ne 0$. I.e., we can reduce to the projective linear structure group. Let
${\rm Hol}_1^{\rm proj} \subset {\rm Hol}_1$ denote the full subcategory of covers
where the transition functions are projective. The above says that
there is a section ${\rm unif} : {\rm Hol}_1 \to {\rm Hol}_1^{\rm proj}$ of the inclusion
\ben
{\rm Hol}_1 \hookrightarrow {\rm Hol}_1^{\rm proj} .
\een 

\begin{lemma} The cocycle $\omega$ is invariant under projective changes of coordinate. That is, for $f$ a projective diffeomorphism one has $f^* \omega =
  \omega$.
\end{lemma}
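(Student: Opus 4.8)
The plan is to read $f^{*}\omega=\omega$ as the naturality statement that, for a projective biholomorphism $f\colon U\to V$ with $w=f(z)$ and induced pullback $f^{*}\colon\scr{L}^\bfC(V)\to\scr{L}^\bfC(U)$, one has $\omega_U(f^{*}s,f^{*}t)=\omega_V(s,t)$ for all $s,t$, where $\omega_U,\omega_V$ denote the cocycle written in the coordinates $z,w$. First I would record how each ingredient transforms. Writing $s=(\alpha_0+\alpha_1\,\d\Bar{w})\otimes\partial_w$, the holomorphic tangent transforms by $f^{*}\partial_w=\tfrac{1}{f'}\partial_z$, the function coefficient by composition, and the antiholomorphic form by $\d\Bar w\mapsto\overline{f'}\,\d\Bar z$; thus the pulled-back components are $\tilde\alpha_0=(\alpha_0\circ f)/f'$ and $\tilde\alpha_1=(\overline{f'}/f')(\alpha_1\circ f)$, and similarly for $\beta$. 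Finally the measure obeys $\d^2 z=|f'|^{-2}\,\d^2 w$. Substituting these into $\omega_U(f^{*}s,f^{*}t)$ and changing variables back to $w$, the antiholomorphic prefactors $\overline{f'}$ commute through $\partial_z^3$ (the holomorphic derivative annihilates them) and combine with the Jacobian, leaving only the transformation of the third-order operator to analyze.

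The crux is therefore the behaviour of $\partial_z^3$ under the substitution. Since $\partial_z=f'\partial_w$ as operators, the essential identity to establish---by a direct, if tedious, computation---is
\[
\frac{1}{(f')^{2}}\,(f'\partial_w)^{3}\!\left(\frac{u}{f'}\right)\;=\;\partial_w^{3}u\;+\;2\,S(h)\,\partial_w u\;+\;(\partial_w S(h))\,u ,
\]
where every function is regarded as a function of $w$ via $z=h(w):=f^{-1}(w)$, and $S(h)=\tfrac{h'''}{h'}-\tfrac{3}{2}\bigl(\tfrac{h''}{h'}\bigr)^{2}$ is the Schwarzian derivative of $h$. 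The leading term $\partial_w^{3}u$ reproduces exactly the cocycle $\omega_V$, while the two correction terms are governed entirely by $S(h)$ and its derivative; this is the precise sense in which the failure of coordinate independence ``is measured by the Schwarzian'' announced earlier. In practice I would verify this identity by expanding $(f'\partial_w)^{3}(u/f')$ term by term and recognizing the coefficients of $\partial_w u$ and $u$ as $2S(h)$ and $\partial_w S(h)$ respectively. The main obstacle is exactly this bookkeeping of the density weights and the correct matching of the lower-order coefficients with the Schwarzian (a stray sign or factor here is the easiest way to go wrong).

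Putting the two pieces together yields
\[
\omega_U(f^{*}s,f^{*}t)=\omega_V(s,t)+\frac{1}{24\pi}\int_{V}\Bigl[\,2S(h)\bigl(\partial_w\alpha_0\,\beta_1+\partial_w\alpha_1\,\beta_0\bigr)+(\partial_w S(h))\bigl(\alpha_0\beta_1+\alpha_1\beta_0\bigr)\Bigr]\d^2 w ,
\]
so the anomaly is a single integral whose integrand is a multiple of $S(h)$ and $\partial_w S(h)$. When $f$ is projective so is $h=f^{-1}$, and the Schwarzian of a M\"obius transformation vanishes identically; hence the correction term is zero and $f^{*}\omega=\omega$. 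As a sanity check, and an alternative route avoiding the general identity, one may instead use that $f\mapsto f^{*}$ is contravariantly functorial, so it suffices to verify invariance on the generators $z\mapsto z+b$, $z\mapsto\lambda z$, and $z\mapsto 1/z$ of the projective group: the first two are immediate from scaling, and only the inversion requires the third-derivative computation above.
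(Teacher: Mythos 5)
Your proposal is correct and follows essentially the same route as the paper: compute the anomaly of the cocycle under a holomorphic change of coordinates, identify it as a multiple of the Schwarzian derivative (your Bol-type identity $\tfrac{1}{(f')^2}(f'\partial_w)^3(u/f') = \partial_w^3 u + 2S\,\partial_w u + (\partial_w S)u$ is exactly the bookkeeping that accomplishes this), and conclude from the vanishing of the Schwarzian on M\"obius transformations. The paper states this lemma without a standalone proof and instead carries out the computation in the proof of the proposition in the ``Second fix: projective connections'' subsection, where the anomaly of the antisymmetrized integrand $u$ is shown to equal a first-order expression times $S(f)\,\overline{\left(\partial f/\partial w\right)}$; this is consistent with your formula, since antisymmetrizing and integrating by parts absorbs your $(\partial_w S)$ term, so the two computations agree.
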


Thus, we can form a factorization algebra $\mathcal{F}_{\omega}^\Vir$ on ${\rm
  Hol}_1^{\rm proj}$. Using the uniformization construction this pulls back to a
factorization algebra on Riemann surfaces via
\ben
\xymatrix{
{\rm Hol}_1 \ar@{.>}[r]^-{\rm unif} & {\rm Hol}^{\rm proj}_1 \ar[r]^-{\mathcal{F}^{\Vir}_\omega}
& \dgNuc .
}
\een 
The problem with this construction is that the induced extension cocycle is not so obvious to write down. There is a more explicit way of doing this. 

\subsection{Second fix: projective connections}

We recall Atiayh's \cite{At1} formulation of connections on holomorphic vector bundles. Let $E$ be a holomorphic vector bundle on a complex manifold $X$. Denote by ${\rm Diff}^{\leq 1}(E) \subset {\rm Diff}(E)$ the subspace of order one differential operators on $E$. There is a short exact
sequence of vector bundles
\ben
0 \to {\rm End}(E) \to {\rm Diff}^{\leq 1} (E) \to T^{1,0}_X \tensor
{\rm End}(E) \to 0 
\een 
where the last map is the symbol map of an order one differential operator. Form
the pull-back along the inclusion of $T_X^{1,0} \hookrightarrow
T_X^{1,0} \tensor {\rm End}(E)$ via $x \mapsto x \tensor {\rm
  id}$. The resulting bundle is the {\it Atiyah}-bundle
\ben
0 \to {\rm End}(E) \to {\rm At}(E) \to T_X^{1,0} \to 0 .
\een 
Atiyah showed that splittings of this sequence are precisely
holomorphic connections. 

Consider the inclusion $\mathcal{O}_X \hookrightarrow {\rm End}(E)$ by viewing
$s \mapsto f \cdot s$ for $f \in \mathcal{O}_X$. One gets the induces sequence
\ben
0 \to {\rm End}(E) / \mathcal{O}_X \to {\rm At}(E) / \mathcal{O}_X \to T_X^{1,0} \to 0 .
\een 
By definition, {\it projective connections} are splittings of the
above sequence. 

\begin{itemize}
\item Non-trivial holomorphic connections on $T_\Sigma$ exist only in genus $1$,
  this is a consequence of Riemann-Roch.
\item Projective connections on $T_\Sigma$ exist for all Riemann surfaces and form a
  torsor over quadratic holomorphic differentials $\Omega^1_{\rm
    hol}(\Sigma)^{\tensor 2}$. 
\end{itemize}

Let ${\rm Hol}_1^{\nabla}$ denote the category of pairs
$(\Sigma,\nabla)$ where $\nabla$ is a projective connection for the
holomorphic tangent bundle $T_\Sigma^{1,0}$. There is
a forgetful functor 
\ben
\pi : {\rm Hol}_1^\nabla \to {\rm Hol}_1 
\een 
that we should think of as a $(\Omega^1_{hol})^{\tensor 2}$-torsor.

Fix a projective connection $\nabla$ on $\Sigma$. Locally, on $\Sigma$
consider the bilinear on $\scr{L}_c(U_z)$
\ben
\omega_{\nabla,z} (X,Y) = \omega_z(X,Y) + \nabla_z \cdot [X,Y] .
\een 

\begin{prop} 
\begin{itemize} 
\item $\omega_\nabla$ defines a cocycle on $\scr{L}_c(U_z)$ and is invariant under holomorphic changes of coordinate.
\item If $\nabla'$ is another projective connection we have
  $\omega_\nabla \sim \omega_{\nabla'}$. 
\end{itemize}
\end{prop}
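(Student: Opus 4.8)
The plan is to reduce both claims to the single mechanism that the correction $\nabla_z\cdot[-,-]$ is a Chevalley--Eilenberg coboundary on each chart, and then to play the coordinate dependence of that coboundary against the transformation law of a projective connection. First I would dispose of the cocycle property. Introduce the local degree-zero functional $\phi_\nabla$ that pairs the holomorphic connection symbol $\nabla_z$ against the $(0,1)$-component of a vector field,
\ben
\phi_\nabla(\alpha\tensor\partial_z) \; = \; \frac{1}{2\pi}\frac{1}{12}\int_{U_z}\nabla_z\,\alpha_1\,\d^2z ,
\een
where $\alpha = \alpha_0 + \alpha_1\dd\zbar$. Its Chevalley--Eilenberg differential recovers the correction, $\dd_{\rm CE}^\vee\phi_\nabla = \nabla_z\cdot[-,-]$, and because $\nabla_z$ is holomorphic the functional $\phi_\nabla$ annihilates $\dbar$-exact inputs (integrate by parts, using $\partial_{\zbar}\nabla_z = 0$), so $\dbar^\vee\phi_\nabla = 0$. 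Hence $\nabla_z\cdot[-,-] = \dd\phi_\nabla$ in $C^*_{\rm loc,red}(\scr{L}^\bfC)$, and $\omega_\nabla = \omega + \dd\phi_\nabla$ is a cocycle because $\omega$ is one (Section~\ref{sec shiftext}) and $\dd^2 = 0$. One also notes that $\phi_\nabla$ factors through $\int_{U_z}$, so $\omega_\nabla$ indeed lands in the local complex.

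The substance of the first bullet is coordinate invariance, which I regard as the main obstacle. Here I would run the change-of-variables computation for $\omega$ under a holomorphic $w = f(z)$. Transporting the components $\alpha_0,\alpha_1$ and expanding $\partial_w^3$ in terms of $\partial_z$ by the chain rule, the leading weight-respecting terms rebuild $\omega$ in the $w$-coordinate while the third derivative emits an anomaly proportional to the Schwarzian:
\ben
f^*\omega - \omega \; = \; \frac{1}{2\pi}\frac{1}{12}\int \{f;z\}\,(\alpha_0\beta_1 + \alpha_1\beta_0)\,\d^2z ,
\een
with $\{f;z\} = f'''/f' - \frac{3}{2}(f''/f')^2$. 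This both recovers and is forced by the earlier lemma that $\omega$ is projectively invariant, since $\{f;z\} = 0$ for M\"obius $f$. By definition a projective connection is a section whose local symbol transforms exactly by this Schwarzian, $\nabla_z = (f')^2(\nabla_w\circ f) + \{f;z\}$; feeding this into $f^*(\nabla_w\cdot[-,-])$, the quadratic-differential part $(f')^2(\nabla_w\circ f)$ reproduces $\nabla_w\cdot[-,-]$ (it is an honest tensor) while the $\{f;z\}$ part cancels the anomaly displayed above, giving $f^*\omega_\nabla = \omega_\nabla$. The delicate point is purely quantitative: one must check that the normalization of the Schwarzian coming out of $\partial_w^3$ agrees in coefficient and sign with the normalization in the connection's transformation law, and one must correctly match the $(0,1)$-component of $[X,Y]$ seen by the correction (after an integration by parts) with the combination $\alpha_0\beta_1 + \alpha_1\beta_0$ appearing in the anomaly. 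This bookkeeping is where all the real work lives.

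For the second bullet I would exploit the torsor structure recalled above. Two projective connections differ by a globally defined holomorphic quadratic differential $q = \nabla' - \nabla \in \Omega^1_{\rm hol}(\Sigma)^{\tensor 2}$, and $q$ transforms as an honest tensor with no anomaly. Consequently $\omega_{\nabla'} - \omega_\nabla = q\cdot[-,-] = \dd\phi_q$, with $\phi_q(\alpha\tensor\partial_z) = \frac{1}{2\pi}\frac{1}{12}\int q\,\alpha_1\,\d^2z$ exactly as in the first step; since $q$ is a genuine section the functional $\phi_q$ is coordinate independent and globally well defined, so $\dd\phi_q$ is an honest coboundary and $\omega_{\nabla'}\sim\omega_\nabla$. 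In particular the two cocycles determine isomorphic $(-1)$-shifted central extensions, so the universal twisted envelope built from $\omega_\nabla$ is, up to isomorphism, independent of the chosen projective connection.
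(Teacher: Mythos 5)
Your first and third steps are sound, and the first is actually slicker than what the paper does. Writing the correction $\nabla_z\cdot[-,-]$ as the Chevalley--Eilenberg coboundary of the linear functional $\phi_\nabla$ (which kills $\dbar$-exact inputs because $\nabla_z$ is holomorphic) gives the cocycle condition for $\omega_\nabla$ for free from that of $\omega$; the paper instead verifies the cyclic identity directly, using Jacobi for its connection bilinear $P$ and a hands-on computation for its bilinear $u$. Your treatment of the second bullet coincides with the paper's: the difference of two projective connections is a holomorphic quadratic differential $q$, and $\omega_{\nabla'}-\omega_\nabla$ is exhibited as the coboundary of the degree $-1$ functional $X \mapsto \int \<q,X\>$.

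The gap is in the coordinate-invariance step, which you yourself flag as ``where all the real work lives'' --- and the formula you propose to do that work with cannot be right. You claim
\ben
f^*\omega - \omega \; = \; \frac{1}{2\pi}\frac{1}{12}\int \{f;z\}\,(\alpha_0\beta_1 + \alpha_1\beta_0)\,\d^2 z .
\een
The left-hand side is graded antisymmetric in $(X,Y)$, since $\omega$ is and pullback preserves this; the right-hand side is manifestly \emph{symmetric} under $\alpha \leftrightarrow \beta$ (the relevant pairing is between a degree $0$ and a degree $1$ component, so no Koszul sign intervenes). Such an identity can only hold if both sides vanish, which they do not. The correct anomaly --- this is what the paper's computation with the bilinears $u$ and $P$ produces --- is the Schwarzian paired against the $(0,1)$-component of the \emph{bracket}:
\ben
f^*u - u \;\propto\; S(f)\,\Bar{\left(\frac{\partial f}{\partial w}\right)}\left[(\alpha_0\partial\beta_1 - \partial\alpha_0\,\beta_1) + (\alpha_1\partial\beta_0 - \partial\alpha_1\,\beta_0)\right]\dd w \,\dd\Bar{w} .
\een
Moreover, the bookkeeping you defer --- matching your product form to the bracket form ``after an integration by parts'' --- cannot be carried out: integration by parts changes the presentation of a functional, not the functional itself, so a symmetric bilinear functional can never equal a nonzero antisymmetric one. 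This is structural, not a matter of constants: the anomaly of the correction term $\nabla\cdot[-,-]$, computed from the transformation law $\nabla_z = (f')^2(\nabla_w\circ f) + \{f;z\}$ that you quote, is automatically $\{f;z\}$ paired with the bracket, so it can cancel a Schwarzian-times-bracket anomaly (which is why the paper's proof closes) but can never cancel the Schwarzian-times-product expression you wrote. Once your anomaly formula is corrected to the bracket form, your argument for the first bullet becomes precisely the paper's.
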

\begin{proof}
{\bf Coordinate invariance} In writing down $\omega_{\nabla,z} = \omega_z$
  we have used a coordinate. We check coordinate invariance, so that
  it defines a section over $\Sigma$. It suffices to understand the case $U_z = \bfC_z$. Suppose
  $f : \bfC_w \to \bfC_z$ is a change of coordinates. Let $u :
  \scr{L}^\bfC \tensor \scr{L}^\bfC \to \Omega^{1,1}_{\bfC}$ denote
  the bilinear map
\ben
(\alpha \partial_z, \beta \partial_z) \mapsto \left(
  \partial_z^3 \alpha_0 \beta_1 - \alpha_0 \partial_z^3 \beta_1) +
  (\partial_z^3\alpha_1 \beta_0 - \alpha_1 \partial_z^3 \beta_0) 
\right) \dd \Bar{z} \dd z .
\een
We compute the difference
\ben
f^*u(\alpha \partial_z, \beta \partial_z) -
u\left(f^*(\alpha \partial_z), f^*(\beta \partial_z)\right) = 2
\left[(\alpha_0 \partial_w \beta_1 - \partial_w \alpha_0 \beta_1)
  + (\alpha_1 \partial_w \beta_0 - \partial_w\alpha_1 \beta_0)
\right] S(f) \Bar{\left(\frac{\partial f}{\partial w}\right)} \dd w
\dd \Bar{w} 
\een
where $S(f)$ is the holomorphic function called the
Schwarzian. Explicitly it is given in terms of first, second, and
third holomorphic derivatives of $f$:
\ben
S(f)(z) = \frac{\partial}{\partial z} \left( \frac{\partial^2 f
    / \partial z^2}{\partial f / \partial z} \right) - \frac{1}{2} \left( \frac{\partial^2 f
    / \partial z^2}{\partial f / \partial z} \right)^2 .
\een 
So the failure of the cocycle $u$ to be independent of a choice of
coordinate is measured by the Schwarzian. 

Let $P : \scr{L}^\bfC \tensor \scr{L}^\bfC \to \Omega^{1,1}(\bfC)$ be
the bilinear 
\ben
(\alpha \partial_z, \beta \partial_z) \mapsto \rho_z \cdot \left( (\alpha_0 \partial_z
  \beta_1 - \partial_z \alpha_0 \beta_1) + (\alpha_1 \partial_z
  \beta_0 - \partial_z \alpha_1 \beta_0) \right) \dd z \dd \Bar{z} .
\een
We compute the difference
\ben
f^*P(\alpha \partial_z, \beta \partial_z) - P\left(f^*(\alpha \partial_z),
f^*(\beta \partial_z) \right) =  \left( (\alpha_0 \partial_z
  \beta_1 - \partial_z \alpha_0 \beta_1) + (\alpha_1 \partial_z
  \beta_0 - \partial_z \alpha_1 \beta_0) \right) S(f)
\Bar{\left(\frac{\partial f}{\partial w}\right)} \dd w \dd \Bar{w} .
\een
This shows that the bilinear $u + 2 P$ is independent of
choice of coordinates. Finally, note that 
\ben
\omega_\bfC = \int_\bfC \; \circ \;  (u + 2 P)
\een
is the desired cocycle defining the extension of $\scr{L}^\bfC$, so we are done. 

{\bf Cocycle condition} We need to show that $\omega_U$ is a cocycle
for the Lie algebra $\scr{L}^\Sigma(U)$ for all $U$. We suppose $U \simeq \bfC$ and we check $\omega_\bfC$
is a cocycle. For simplicity write elements $\alpha \partial \in
\scr{L}^\bfC(\bfC)$ as $\alpha$. In terms of the bilinears
$u,P$ above we have
\bestar
\omega_\bfC ([\alpha,\beta],\gamma) +
\omega_\bfC([\beta,\gamma],\alpha) +
\omega_\bfC([\gamma,\alpha],\beta) & = & \int_\bfC\left[ \left(u([\alpha,\beta],
  \gamma) + u([\beta,\gamma],\alpha) + u([\gamma,\alpha], \beta)
\right) \right .\\ & + & 2\left. \left(P([\alpha, \beta],\gamma) + P([\beta,\gamma],\alpha) +
P([\gamma,\alpha],\beta) \right) \right]
\eestar
It follows from Jacobi that $P$-terms vanish. So, it
suffices to show that 
\ben
\int_\bfC \left(u([\alpha,\beta],
  \gamma) + u([\beta,\gamma],\alpha) + u([\gamma,\alpha], \beta)
\right) = 0 .
\een
This is a straightforward calculation.

Now, we show independence of $\omega_{\nabla,z}$ on a projective connection. Again, this is a local calculation. Suppose $\nabla,\nabla'$ are
  two projective connections, and let $\omega, \omega'$ and $P,P'$
  denote induced the bilinears as above, respectfully. We
  need to show that $\omega - \omega'$ is a coboundary
  when viewed as a cocycle in $C^*_{\rm red}(\scr{L}^\bfC)$. As mentioned above, the difference of
  two ordinary projective connections is simply a quadratic differential. It
  follows that we may view the difference $\nabla - \nabla'$ as an element
  in $\Omega^{0,*}(\Sigma, K_\Sigma^{\tensor 2})$. Then, we see that
  for $X \in \scr{L}^\Sigma$ 
\ben
(\nabla - \nabla') \cdot X = \<\nabla-\nabla',X\>
\een
where $\<-,-\>$ denotes the natural pairing
\ben
\Omega^{0,*}(\Sigma,K_\Sigma^{\tensor 2}) \tensor
\Omega^{0,*}(\Sigma,T_\Sigma) \to \Omega^{0,*}(\Sigma,K_\Sigma) \simeq
\Omega^{1,*}(\Sigma) .
\een
Denote by $\Phi = \int \circ \<\nabla-\nabla
',-\>_{(1,1)} : \scr{L}^\bfC \to
\bfC$. Note that $\Phi$ is linear of degree $-1$, so that it is a
$0$-cocycle for $\scr{L}^\bfC$. We have
\ben
(\omega - \omega')(\alpha,\beta) = \Phi([\alpha,\beta]) .
\een
This is what we wanted to show. 
\end{proof}

We take away two main observations: (1) there is a local cocycle $\omega \in {\rm
  H}_{\rm loc}^1(\sL^\Sigma)$ for each $\Sigma$ and hence an associated factorization algebra
$\Tilde{\sVir}$ on ${\rm Hol}_1^\nabla$ and (2) that we can descend along $\pi$ to get a factorization algebra
$\sVir : {\rm Hol}_1 \to \dgNuc$ as desired. When restricted to the over category of open sets $U \subset \bfC$ we
produce the factorization algebra from the first part of the paper,
hence the repetition of notation. 

The prefactorization algebra $\sVir$ can be
described explicitly as follows. To a pair $(\Sigma, \nabla)$ of a Riemann surface together with a projective connection we define
\ben
\Tilde{\sVir} (\Sigma,\nabla) = U^{fact}\Hat{\scr{L}}^{\Sigma} = U^{fact}_{\omega_\Delta} \scr{L}^\Sigma
\een
where $\Hat{\scr{L}}^\Sigma$ is the dg Lie algebra that is the extension of $\scr{L}^\Sigma$ determined by the cocycle $\omega_\nabla$. 

Thus, the factorization algebra $\sVir$ on ${\rm Hol}_1$ has
the following interpretation. Given a Riemann surface $\Sigma$ choose
{\it any} projective connection $\nabla$. Form the twisted envelope as
above. By the proposition this extension is independent of the
projective connection chosen. 

\subsection{Fixed Riemann surface} \label{sec fixed}
For each Riemann surface $\Sigma$ we can restrict our factorization algebra $\sVir$ to the overcategory ${\rm Hol}_{1 / \Sigma}$ to get a factorization algebra on $\Sigma$ which we denote $\sVir^\Sigma$. The construction depends on the 2-cocycle $\omega$, but on a fixed Riemann surface the choice is up to a scaling. 

\begin{prop}\label{prop1} Let $\Sigma$ be any Riemann surface. Then, we have
\ben
{\rm H}^1\left(C^*_{\rm loc}(\scr{L}^\Sigma)\right) \; \cong \; \bfC.
\een
\end{prop}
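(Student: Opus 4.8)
The plan is to reduce the problem to a purely formal, jet-level computation, where the answer is governed by the Gelfand--Fuks cohomology of formal holomorphic vector fields. First I would use the jet description of local cochains recalled above,
\ben
C^*_{\rm loc,red}(\scr{L}^\Sigma) \;=\; {\rm Dens}_\Sigma \tensor_{\scr{D}_\Sigma} C^*_{\rm red}\!\left({\rm Jet}(\scr{L}^\Sigma)\right),
\een
and note that the only global input is ${\rm Dens}_\Sigma$ together with the $\scr{D}_\Sigma$-module structure; the fibre of ${\rm Jet}(\scr{L}^\Sigma)$ is the same for every Riemann surface, since any two are formally isomorphic to the disc. Concretely, the jets of the $\dbar$-resolution $\Omega^{0,*}(\Sigma,T_\Sigma)$ are quasi-isomorphic, by the formal Dolbeault lemma, to the holomorphic jets of $T_\Sigma$, whose fibre Lie algebra is the formal holomorphic vector fields $W_1 := \bfC[[z]]\partial_z$. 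Hence the fibrewise content of the complex above is the reduced Chevalley--Eilenberg cohomology $\mathrm{H}^*_{\rm red}(W_1)$. Since the reduced and unreduced local complexes differ only by a copy of $\bfC$ in degree $0$, they agree in degree $1$ and it suffices to treat the reduced version.

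Next I would invoke the Gelfand--Fuks package, in the form that factors the $\scr{D}_\Sigma$-module de Rham functor, to obtain a (degenerating) spectral sequence
\ben
E_2^{p,q} \;=\; \mathrm{H}^p_{\rm dR}(\Sigma)\tensor \mathrm{H}^{q}_{\rm red}(W_1) \;\Longrightarrow\; \mathrm{H}^{p+q-2}_{\rm loc,red}(\scr{L}^\Sigma),
\een
the shift by $-2 = -\dim_\bfR\Sigma$ coming from the integration of a density. The essential input here is the classical Gelfand--Fuks computation $\mathrm{H}^*(W_1)\cong\mathrm{H}^*(S^3)$: in positive degrees this gives $\mathrm{H}^3_{\rm red}(W_1)\cong\bfC$ and $\mathrm{H}^q_{\rm red}(W_1)=0$ for $q\neq 3$; in particular $\mathrm{H}^1_{\rm red}(W_1)=\mathrm{H}^2_{\rm red}(W_1)=0$. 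The degree-$3$ generator is precisely the class underlying the Gelfand--Fuks/Virasoro cocycle.

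Putting these together, the only bidegree contributing to total degree $1$ is $(p,q)$ with $p+q=3$ and $\mathrm{H}^q_{\rm red}(W_1)\neq 0$, which forces $(p,q)=(0,3)$, so
\ben
\mathrm{H}^1\!\left(C^*_{\rm loc}(\scr{L}^\Sigma)\right)\;\cong\;\mathrm{H}^0_{\rm dR}(\Sigma)\tensor \mathrm{H}^3_{\rm red}(W_1)\;\cong\;\bfC.
\een
This count is manifestly genus-independent: any contribution through $\mathrm{H}^1_{\rm dR}(\Sigma)\cong\bfC^{2g}$ would have to pair with $\mathrm{H}^2_{\rm red}(W_1)=0$, and the topologically rigid factor $\mathrm{H}^2_{\rm dR}(\Sigma)$ would pair with $\mathrm{H}^1_{\rm red}(W_1)=0$; both drop out. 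As a consistency check on the lower bound, the cocycle $\omega_\nabla$ produced in the previous proposition is nonzero---its restriction to a coordinate disc is the cocycle $\omega$, which defines the nontrivial Virasoro extension---so $\dim\mathrm{H}^1_{\rm loc}\geq 1$ in agreement with the count.

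The main obstacle is the second step: making the Gelfand--Fuks identification rigorous in the topological and derived setting. One must control the completed tensor products, track the $\scr{D}_\Sigma$-module structure carried by the Dolbeault jets, check that the de Rham functor genuinely splits off the two tensor factors with the asserted degree shift, and verify that the spectral sequence degenerates so that no differential connects the surviving $\mathrm{H}^0_{\rm dR}(\Sigma)\tensor\mathrm{H}^3_{\rm red}(W_1)$ class to anything else. Supplying or citing the value of $\mathrm{H}^*_{\rm red}(W_1)$ and pinning down the shift so that the Virasoro class lands exactly in degree $1$ are the load-bearing points; everything else is bookkeeping.
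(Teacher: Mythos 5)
Your proposal is correct and is essentially the paper's own proof: both pass to the jet description of local cochains, identify the fibre dg Lie algebra of the Dolbeault jets with the formal vector fields $W_1 = \bfC\llbracket z \rrbracket \partial_z$, invoke the Gelfand--Fuks computation ${\rm H}^*_{\rm red}(W_1) \cong \bfC[-3]$, and run the form-degree spectral sequence so that with the two-dimensional integration shift the only contribution to degree $1$ is ${\rm H}^0_{\rm dR}(\Sigma) \tensor {\rm H}^3_{\rm red}(W_1) \cong \bfC$. The one point you defer to the ``main obstacle'' --- that the fibrewise trivialization by the Gelfand--Fuks generator is coordinate-independent and compatible with the flat connection, so the $E_2$-page is genuinely the de Rham complex --- is exactly the step the paper supplies to close the argument.
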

\begin{proof} 

Consider $\Sigma = \bfC$. Then
\ben
C^*_{\rm loc}(\scr{L}_\bfC) \; = \; \Omega^*_{\bfC}
\tensor_{\scr{D}_\bfC} C^*_{\rm red}\left({\rm
    Jet}_{\scr{L}_\bfC}\right) \; \cong \; \Omega^*\left(\bfC; C^*_{\rm red}({\rm
  Jet}_{\scr{L}_\bfC})\right)[2]
\een 
Now, ${\rm Jet}_{\scr{L}_\bfC}$ corresponds to the dg Lie algebra
$\bfC\llbracket z,\Bar{z}, \dd \Bar{z} \rrbracket \partial_z$ with
differential given by $\Bar{\partial}$. This is quasi-isomorphic to
the Lie algebra of formal holomorphic vector fields $W_1 := \bfC\llbracket z
\rrbracket \partial$ (with zero differential). So, we see $C^*_{\rm
  loc}(\scr{L}_\bfC) \simeq C^*(W_1)[2]$. A calculation of
Gelfand-Fuchs \cite{GF} implies that 
\ben
{\rm H}_{\rm red}^*(W_1) = \bfC[-3]
\een
concentrated in degree 3. The generator is of the form
$\partial_z^\vee \cdot (z \partial_z)^\vee \cdot
(z^2 \partial_z)^\vee$. 

We'd like to bootstrap this to the global case. Consider the
filtration of $\Omega^*\left(\Sigma, C^*_{\rm red}({\rm
  Jet}_{\scr{L}^\Sigma}) \right)$ by form degree. This spectral
sequence has $E_2$-page
\ben
E_2 = \Omega^*\left(\Sigma, \ul{{\rm H}}^*\left(C^*_{\rm red}({\rm
  Jet}_{\scr{L}^\Sigma}\right)\right) .
\een
Here, $\ul{\rm H}$ means the cohomology $\scr{D}$-module. We have computed the cohomology of the fibers of $\ul{H}^*\left(C^*_{\rm red}({\rm
  Jet}_{\scr{L}^\Sigma})\right)$, and they are concentrated in a single
degree. Choosing a formal coordinate at a point in $\Sigma$
trivializes the fiber of this point to $\bfC\<\partial_z^\vee \cdot (z \partial_z)^\vee \cdot
(z^2 \partial_z)^\vee\>$. This trivialization is independent of
coordinate choice and compatible with the flat connection. Thus
\ben 
\ul{{\rm H}}^*(C^*_{\rm red}({\rm
  Jet}_{\scr{L}^\Sigma}) \simeq C^\infty_\Sigma[-3]
\een
with its usual flat connection. This completes the proof.
\end{proof}

\subsection{Symmetries by vector fields}

The primary appearance of the Virasoro vertex algebra in physics is as a
symmetry of two dimensional conformal field theories. That is, the
Virasoro vertex algebra acts on conformal field theories with a
specified central charge. Later on we will see an example of how the Virasoro
factorization algebra appears as a symmetry of certain holomorphic quantum field theories using the BV formalism as
developed in \cite{CG1}, \cite{CG2}. For now, we would like to discuss
the meaning of such a Virasoro symmetry on general holomorphic
factorization algebras. 

A vertex algebra is conformal of central
charge $c$ if there is an element $v_c \in V$ such that the Fourier
coefficients $L_n^V$ of the vertex operator $Y(v_c, z) = \sum_n L_n^V
z^{-n-2}$ span a Lie algebra that is isomorphic to $\Vir_c$. Moreover,
one requires that $L_{-1}^V = T$ the translation operator, and
$L_{0}^V|_{V_n} = n \cdot {\rm Id}_{V_n}$. This can be wrapped up by
saying we have a map of vertex algebras ${\bf Vir}_c \to V$ sending
$L_{-2} \cdot 1$ to $v_c$. 

Motivated by this, we introduce the following terminology for
holomorphic factorization algebras. We say a
Virasoro symmetry of central charge $c$ of a holomorphic factorization
algebra $\sF$ is a map of holomorphic factorization algebras
\be\label{vir sym}
\Phi : \sVir_c \to \sF .
\ee

\begin{remark} A holomorphic factorization algebra is a
  symmetric monoidal functor 
\ben
\sF : {\rm Hol}_1 \to \dgNuc 
\een
A map of holomorphic factorization algebras is a natural
transformation between symmetric monoidal functors of the above
form. In particular, in the definition above we require the existence
of maps 
\ben
\Phi(\Sigma) : \sVir_c(\Sigma) \to \sF(\Sigma)
\een
for each one-dimensional complex manifold $\Sigma$. Moreover, these
maps must be natural with respect to holomorphic embeddings. 
\end{remark}

In the next section we will show an example using BV quantization to
implement a map of factorization algebras $\sVir_c \to \sF$ where
$\sF$ is the quantum observables of the $\beta\gamma$ system. In the
remainder of this section we'd like to extract one consequence of
having a Virasoro symmetry of charge $c$. We see that in the case of
factorization algebras on $\bfC$ we recover the usual notion of a conformal vertex algebra.

Indeed, suppose that $\sF$ is a holomorphic factorization algebra
  on $\bfC$ satisfying the conditions of Theorem \ref{fv}. Then a
  Virasoro symmetry of central charge $c$ from (\ref{vir sym}) induces the
  structure of a conformal vertex algebra on $\VVert(\sF)$ of charge
  $c$. As the construction $\VVert(\sF)$ is functorial we
  obtain a map of vertex algebra
\ben
\VVert(\Phi) : \Vir_c \to \VVert(\sF). 
\een
Explicitly, the conformal vector is given by $\VVert(\Phi)(L_{-2} 1_{\rm
    Vir}) \in \VVert(\sF)$. 

A fundamental object in conformal field theory is the so-called bundle of conformal blocks on
the moduli space of curves.  Given a vertex algebra describing a holomorphic
conformal field theory the action of the Virasoro Lie aglebra is necessary for the
construction of bundle equipped with a projectively flat connection through a process called ``Virasoro
uniformization'', see Chapter 17 of \cite{FBZ}, for instance. This is
a version of Gelfand-Kazhdan descent (sometimes referred to as
Harish-Chandra localization) along a certain bundle of
coordinates over the pointed moduli of curves
$\mathscr{M}_{g,1}$ which we briefly summarize. The moduli space
$\mathscr{M}_{g,1}$ consists of pairs $(\Sigma, x)$ where $\Sigma$ is a
curve and $x \in \Sigma$. There exists a canonically defined
$\Vir_0$-torsor $\Hat{\mathscr{M}}_{g,1}$ over $\mathscr{M}_{g,1}$
consisting of triples $(\Sigma,x,\varphi)$ where $\varphi$ is a formal
coordinate near $x$. One then considers modules for the {\em pair}
$({\rm Aut}(\hat{D}), \Vir_0)$ where $\Aut(\Hat{D})$ is the group automorphisms
of the formal disk. These objects are simultaneously modules for
$\Vir_0$ and the group ${\rm Aut}(\Hat{D})$ that are compatible
with the natural inclusion of Lie aglebras ${\rm Lie}({\rm Aut}(\Hat{D}))
\hookrightarrow \Vir_0$. In practice, and in all of the examples we
care about, $\Vir_0$-modules can be exponentiated to modules for the
pair. 

If one starts with a module $V$ for the pair $({\rm Aut}(\hat{D}),
\Vir_0)$ Virasoro uniformization can be viewed as a two-step
process. First, one forms the associated bundle over
$\mathscr{M}_{g,1}$ using the action of formal automorphisms. The
residual action of $\Vir_0$ defines the data of a flat connection. In
the case that one has an action of $\Vir_c$, for some nonzero charge
$c$, one gets a {\em projectively flat} connection. The resulting
object is no longer a D-module on the moduli of curves, but rather a
module for a sheaf of twisted differential operators. In
the case that $V$ is a conformal vertex algebra, the resulting bundle
is the bundle of conformal blocks equipped with its projectively flat
connection. For instance, in the case of the Virasoro vertex algebra
of central $c$, one finds a sheaf of twisted differential operators
on $\scr{M}_{g,1}$ (see \cite{BS}, for instance). 

One can attempt a similar construction at the level of factorization
algebras. Indeed, let $\sF$ be a holomorphic factorization algebra on
$\bfC$ that is equipped with a map of factorization algebras
\ben
\Phi : \sVir_c \to \sF
\een 
as in (\ref{vir sym}). In the case that $\sF$ is holomorphic we see
that $\sF(D)$ is a module for the Lie algebra of annular
observables. Indeed, we have a factorization map
\ben
\mu : \sF(A) \tensor \sF(D) \to \sF(D_{big})
\een
where $D_{big}$ is a disk centered at zero containing the annulus
$A$. We have already seen that the structure maps coming from nested
annuli give $\sF(A)$ the structure of a Lie algebra. Suppose that for
any inclusions of disks centered at zero
  $D(0,r) \hookrightarrow D(0,R)$ the induced map
\ben
\sF(D(0,r)) \xrightarrow{\simeq} \sF(D(0,R))
\een
is a quasi-isomorphism. Then, the structure map $\mu$ together with
the map ${\rm H}^*\Phi(A) : \Vir_c \simeq H^*(\sVir_c(A)) \to \sF(A)$ give ${\rm
  H}^*(\sF)$ the structure of a module over the Lie algebra ${\rm
  Vir}_c$. Thus, we can descend the space $\sF(D)$ to get a sheaf equipped with a
projective flat connection on $\mathscr{M}_{g,1}$. One expects that
the fiber of this bundle over a fixed curve $\Sigma$ coincides with
the global sections, or factorization homology $\int_\Sigma \sF$
defined in the next section. 

\section{Factorization homology and correlation functions}

\subsection{Global sections}

In this section we compute the cohomology of the global sections of the factorization
algebra $\sVir^\Sigma$. This is known as the factorization
homology of $\sVir^\Sigma$ and is denoted by
\ben
\int_\Sigma \sVir^\Sigma = {\rm H}^*(\sVir^\Sigma (\Sigma)) .
\een
In the language of chiral algebras the cohomology of global sections is often referred to as the ``chiral homology'' in the literature
\cite{FG, BD} and is dual to the space of ``conformal blocks''. We
will discuss conformal blocks for the Virasoro in more detail shortly.  

\begin{remark} 
As noted above ${\rm B}\scr{L}^\Sigma$ describes the formal completion at 
$\Sigma$ inside of $\scr{M}_g$, the moduli of Riemann surfaces of
genus $g$. We have already remarked that $\int_\Sigma \sVir^\Sigma$ is
the $\infty$-jet at $\Sigma$ of a certain sheaf on the
moduli of curves $\scr{M}_g$; namely the
sheaf of twisted differential operators. An independent definition of this
sheaf on the moduli of curves from the point of view of factorization
algebras is non-trivial, and we defer making any
precise relationships at the moment.
\end{remark}

Now, we compute the factorization homology. We will need the following
fact about the Dolbeault resolution of holomorphic vector fields. 

\begin{prop} The dg Lie algebra $(\Omega^{0,*}(\Sigma,T_\Sigma),
  \Bar{\partial})$ is formal. 
\end{prop}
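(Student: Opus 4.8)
The plan is to argue by cases on the genus $g$ of $\Sigma$, reducing two of the three cases to the single-degree formality lemma proved above and handling the remaining (genus one) case by an explicit harmonic argument. First I would record that $L := (\Omega^{0,*}(\Sigma,T_\Sigma),\dbar)$ is a two-term complex concentrated in degrees $0$ and $1$, since $\Sigma$ has complex dimension one; its only cohomology is therefore $H^0(\Sigma,T_\Sigma)$ and $H^1(\Sigma,T_\Sigma)$. I would also note the curve-specific fact that the bracket $\Omega^{0,1}\tensor\Omega^{0,1}\to\Omega^{0,2}=0$ vanishes identically, which forces the induced bracket on $H^1$ to be trivial.

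Next I would locate the cohomology using Serre duality and Riemann--Roch. Since $T_\Sigma^\vee = K_\Sigma$, one has $H^1(\Sigma,T_\Sigma)\cong H^0(\Sigma,K_\Sigma^{\tensor 2})^\vee$, and I would read off the following. For $g=0$ we have $T_\Sigma=\mathcal{O}(2)$ while $\deg K_\Sigma^{\tensor 2}<0$, so $H^1=0$ and the cohomology of $L$ is concentrated in degree $0$ (equal to $\mathfrak{sl}_2$). For $g\geq 2$ we have $\deg T_\Sigma<0$, so $H^0=0$ and the cohomology is concentrated in degree $1$, of dimension $3g-3$. In both of these cases $H^*(L)$ lives in a single degree, so the lemma above (any dg Lie algebra with cohomology in one degree is formal) applies verbatim, producing the zig-zag $L\hookleftarrow L'\to H^*(L)$ of dg Lie algebra quasi-isomorphisms.

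The genuinely different case, and the main obstacle, is $g=1$, where both $H^0$ and $H^1$ are one-dimensional and the single-degree lemma is unavailable. Here I would use that an elliptic curve is parallelizable: $T_\Sigma\cong\mathcal{O}_\Sigma$ is trivialized by the global vector field $\partial_z$ coming from the flat coordinate on $\bfC/\Lambda$, so $L\cong\Omega^{0,*}(\Sigma)\cdot\partial_z$. Equipping $\Sigma$ with the flat metric, the harmonic representatives are precisely the constant-coefficient forms, spanned by $\partial_z$ and $\dd\Bar{z}\,\partial_z$. The key computation is that for constant-coefficient $f,g$ one has $[f\partial_z,g\partial_z]=\bigl(f\,\partial_z g - g\,\partial_z f\bigr)\partial_z=0$, since $\partial_z$ annihilates constants; hence the harmonic subspace $\mathcal{H}$ is an abelian sub-dg-Lie-algebra on which $\dbar$ vanishes. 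Hodge theory makes $\mathcal{H}\hookrightarrow L$ a quasi-isomorphism, and it is a map of dg Lie algebras because $\mathcal{H}$ is bracket-closed; as $\mathcal{H}\cong H^*(L)$ with trivial bracket and trivial differential, this exhibits $L$ as formal.

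Combining the three cases proves the proposition. I expect the only real subtlety to be the genus-one bracket computation: one must verify both that harmonic (constant-coefficient) forms are closed under the Schouten bracket and that the induced bracket on $H^*(L)$ vanishes, each of which reduces to $\partial_z$ killing constants on the flat torus. The other two cases are essentially formal consequences of the lemma already established.
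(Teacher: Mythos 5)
Your proof is correct, but there is nothing in the paper to compare it against: the paper states this proposition without proof (it is anticipated only by the unproved remark in Section 2 that the dg Lie algebra on a closed surface ``is also formal''), so your argument fills a genuine gap rather than offering an alternative route. The case analysis is sound and matches the cohomology computations the paper itself uses later: by Serre duality and Riemann--Roch the cohomology is $\mathfrak{s}\ell_2(\bfC)$ in degree $0$ for $g=0$ and $\bfC^{3g-3}$ in degree $1$ for $g\geq 2$, so the paper's single-degree lemma applies in those cases; and you correctly identify $g=1$ as the case that degree-counting cannot touch, where the trivialization $T_\Sigma\cong\mathcal{O}_\Sigma$ by $\partial_z$ and flat-metric Hodge theory exhibit the harmonic subspace $\bfC\,\partial_z\oplus\bfC\,\dd\Bar{z}\,\partial_z$ as an abelian sub-dg-Lie-algebra with zero differential mapping quasi-isomorphically onto $H^*(L)$ (whose induced bracket is indeed trivial, since $[\partial_z,\dd\Bar{z}\,\partial_z]=0$ and $\Omega^{0,2}=0$). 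One caveat worth recording: the lemma you invoke for $g=0$ and $g\geq 2$ has a sketchy proof in the paper --- when the cohomology sits in top degree the truncation $L'$ is all of $L$, and one must still check that the projection $L\to H^*(L)$ respects brackets, i.e.\ that $[L^0,L^1]$ consists of exact classes, which is not immediate from the zig-zag as written. The lemma itself is nonetheless true (by homotopy transfer, an $L_\infty$ operation $l_n$ on cohomology concentrated in degree $m$ has target in degree $nm+2-n$, which equals $m$ only when $m=0,n=2$, so all higher operations vanish), so citing it is legitimate; but if you want your proof to be fully self-contained you should either supply that degree-counting argument or note that your genus-one harmonic method in fact works uniformly whenever a metric is chosen, with the bracket-closure of harmonics being the only case-specific input.
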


That is, the dg Lie algebras $\Omega^{0,*}(\Sigma,T_\Sigma)$ and ${\rm
  H}_{\Bar{\partial}}^*(\Omega^{0,*}(\Sigma,T_\Sigma))$ are
quasi-isomorphic. It follows that ${\rm H}^*(\Vir^\Sigma(\Sigma) )$
is equal to the cohomology of the complex
\ben
\left( {\rm Sym}({\rm
    H}^*(\Sigma,T\Sigma) \oplus \bfC \cdot C), \dd_{\rm CE} \right)
\een
since $\Bar{\partial}$ kills the central term $C$. 

The full differential on $\sVir^\Sigma$ is $\dbar + \d_{Lie} +
\omega$ where $\d_{Lie}$ is the Chevalley-Eilenberg differential for
the Lie algebra $\Omega^{0,*}(\Sigma, T\Sigma)$ and $\omega$ is the
cocycle of Section \ref{sec fixed}.

\noindent {\bf The case $g = 0$}

We have ${\rm H}^*(\Sigma_0,T_{\Sigma_0}) \cong \mathfrak{s}
\ell_2(\bfC)$ generated by the vector fields $\partial_z,
z \partial_z,$ and $z^2 \partial_z$. For degree reasons the central
extension does not contribute to the Lie differential. Thus
\ben
\int_{\Sigma_0} \sVir^{\Sigma_0} \; \cong \; {\rm
  H}^{\rm Lie}_*(\mathfrak{s}\ell_2(\bfC)) [c] \; \cong \; \bfC[y,C] 
\een
with ${\rm deg}(y) = 3$ and ${\rm deg}(C) = 0$.
\\ \\
\noindent {\bf The case $g=1$}

In this case we know that the dg Lie algebra ${\rm H}^*(\Sigma_1,T_{\Sigma_1}) = \bfC
\oplus \bfC[-1]$ with zero Lie bracket and zero
differential. Moreover, ${\rm H}^0$ is generated by the constant
vector field $\partial_z$. The bilinear form defining the
central extension vanishes on constant vector fields so doesn't
contribute to the Lie differential. Thus
\ben
\int_{\Sigma_1} \sVir^{\Sigma_1}\; \cong \; {\rm Sym}
(\bfC[1] \oplus \bfC \oplus \bfC\cdot C) \; \cong \; \bfC[x,y,C]
\een 
with ${\rm deg}(x) = -1$ and ${\rm deg}(y) = {\rm deg}(C) = 0$.
\\ \\
\noindent {\bf The case $g > 1$}

The dg Lie algebra is ${\rm H}^*(\Sigma_g, T_{\Sigma_g}) = \bfC^{3g-3}[-1]$. For degree
reasons this algebra is abelian and does not interact with the central
extension. Thus 
\ben
\int_{\Sigma_g} \sVir^{\Sigma_g} \; \cong \; {\rm Sym}
(\bfC^{3g-3} \oplus \bfC\cdot C) \; \cong \; \bfC[y_1,\ldots,y_{3g-3},C]
\een
with ${\rm deg}(y_1) = \cdots = {\rm deg}(y_{3g-3}) = {\rm deg}(c) =
0$.

\subsection{Correlation functions}

In this section we compute the {\em correlation functions} associted
to the Virasoro factorization algebra. These calculations are
reminiscent for those of the conformal blocks of a conformal vertex
algebra and exhibits the utility of our approach to factorization in CFT. 

Fix a Riemann surface $\Sigma$ and consider a collection of disjoint
opens $U_1,\ldots,U_n \subset \Sigma$. The $n$-point correlation
function for associated to these open sets is the factorization structure
map 
\ben
\Phi_{U_1,\ldots,U_N} : \sVir^{\Sigma}(U_1) \tensor \cdots \tensor 
\sVir^{\Sigma} (U_n) \to 
\sVir^{\Sigma}(\Sigma) .
\een

Consider the case of $\Sigma = \bfC$ and suppose that each of the opens $U_i$ is a biholomorphic to a disk of a
certain fixed radius $r$. Suppose, moreover, that $\sF$ is a holomorphically
  translation-invariant factorization algebra on $\bfC$. Then it is an
  algebra over the co-operad $\Omega^{0,*}({\rm Disks})$. In particular, for each $nt$ we can think
of the $n$-point correlator as a holomorphic function on the space
\ben
{\rm Disks}_n(r) \simeq {\rm Conf}_n(\bfC) .
\een 
We now describe an explicit way of calculating these $n$-point
correlation functions that bears some resemblance to the standard method
of computing correlation functions in conformal field theory. 

First, we fix a partial inverse $\dbar^{-1}$ for the Dolbeault operator $\dbar$ for
the holomorphic tangent bundle $T_\Sigma$. This operator
vanishes on harmonic functions and 1-forms and is inverse to $\dbar$
on the complement to the space of harmonic functions and 1-forms. We
can construct it as follows. Let $G$ be a Green's function for the
$\dbar$ operator. It satisfies the equation
\ben
\dbar G = \omega_{diag}
\een
where $\omega_{diag}$ is the $(1,1)$-form on $\Sigma \times \Sigma$ that
is the volume element along the diagonal and zero elsewhere. Given $G$ we define the
operator $\dbar^{-1}$ via the formula
\ben
(\dbar^{-1} \varphi)(z) = \int_w G(z,w) \varphi(w) . 
\een

 
Let $a_1,\ldots, a_n \in \Omega^{0,*}(\Sigma, T \Sigma)$ be
$\dbar$ closed. We will
write down a general formula for the cohomology class of the
factorization product $a_1\cdots a_n$. Moreover, suppose that $a_1$ is in the
orthogonal complement to harmonic $(0,*)$ forms. Then consider the expression
\bestar
(\dbar + \d_{Lie} + \omega) \left((\dbar^{-1} a_1) a_2 \cdots
  a_n\right) &=
  & (\dbar \dbar^{-1} a_1) a_2 \cdots a_n + \sum_{j = 2}^n (-1)^{j +
    1} [\dbar^{-1} a_1, a_j] a_2 \cdots \Hat{a_j} \cdots a_n \\
& + & \sum_{j=2}^n (-1)^{j+1} \omega(\dbar^{-1} a_1, a_j) a_2 \cdots
\Hat{a_j} \cdots a_n .
\eestar
The first line follows from the fact that the only non trivial Lie
bracket involving the elements $a_1,\ldots,a_n$ is between $a_1$ and
$a_j$ for $j \ne 1$. The second line follows from the fact that the cocycle
$\omega$ is cohomologically degree one. 

Since the term on the left hand side is exact in the cochain complex $\sVir^\Sigma(\Sigma)$ we have at the level of cohomology
\be\label{n point}
\left\lfloor a_1 \cdots a_n \right\rfloor = \sum_{j = 2}^n (-1)^{j} \left\lfloor [\dbar^{-1} a_1,
a_j] a_2 \cdots \Hat{a_j} \cdots a_n \right\rfloor + \sum_{j=2}^n (-1)^{j}
\omega(\dbar^{-1} a_1, a_j) \left\lfloor a_2 \cdots \Hat{a_j} \cdots a_n\right\rfloor .
\ee
In particular, we see that $\lfloor a \rfloor = 0$ for any $a$. 

\subsubsection{Genus zero}
We can use this formula to recover well-known relations involving the
genus zero correlation functions. Fix a collection of points
$(x_1,\ldots,x_n) \in {\rm Conf}_n(\bfC P^1)$ and suppose $\epsilon >
0$ is such that the collection of disks $\{D(x_i, \epsilon)\}$ are
pairwise disjoint. Fix radial bump functions $f_{x_i}(z,\zbar) = f(r^2)$ for the disks
$D(x_i, \epsilon)$ and consider the $(0,1)$-forms $f_{x_i}(z,\zbar) \d
\zbar \in \Omega^{0,1}(D(x_i, \epsilon))$ which define the holomorphic vector
field valued forms
\ben
a_{x_i}(z,\zbar) := f_{x_i}(z,\zbar) \d
\zbar\partial_z  \in
\Omega^{0,1}\left(D(x_i, \epsilon), T \bfC P^1|_{D(x_i,
  \epsilon)}\right) \subset \sVir(D(x_i, \epsilon))
\een
on $D(x_i, \epsilon)$. On should think of $a_{x_i}$ as a mollified version of a point-like
observable supported at $x_i$. 

We will compute the resulting $n$-point correlation functions
\[
\left\lfloor a_{x_1} \cdots a_{x_n} \right\rfloor \in {\rm H}^0 \int_{\bfC
  P^1} \sVir \cong \bfC \cdot C .
\]
Here, we note that each $a_{x_i}$ is a (linear) degree zero element in
the factorization algebra
$\sVir$ so the resulting element in factorization homology is also
degree zero. We have already computed that ${\rm H}^0$ of the
factorization homology on $\bfC P^1$ is one-dimensional spanned by
the central element $C$. 

Using the explicit form of the operator $\dbar^{-1}$ on $\bfC P^1$ we
find
\[
\dbar^{-1}(a_{x_i}(z,\zbar)) = \frac{1}{z-x_1} \partial_z .
\] 
For $a_i = a_{x_i}$, the recursive equation for the $n$-point function
Equation (\ref{n point}) becomes 
\bestar
\left\lfloor a_{x_1} \cdots a_{x_n} \right\rfloor & = & \sum_{j=2}^{n} (-1)^j
\left\lfloor \left[\frac{1}{z-x_1} \partial_z, a_{x_j}(z,\zbar)
  \right] a_{x_2} \cdots \Hat{a}_j \cdots a_{x_n} \right\rfloor \\ & + &
c \sum_{j=2}^n (-1)^j \omega \left(\frac{1}{z-x_1} \partial_z, f_j(z,\zbar) \d
  \zbar \partial_z \right) \left\lfloor a_{x_2}\cdots \Hat{a}_{x_j} \cdots a_{x_n}
\right\rfloor .
\eestar
Let us use this formula to compute the $n$-point function for small
$n$. We have already remarked that $\lfloor a_{x_1} \rfloor =0$. Now,
suppose $x_1 \ne x_2$, then the recursive formula implies
\ben
\left\lfloor a_{x_1} a_{x_2} \right\rfloor = c \omega \left(
  \dbar^{-1} a_{x_1}, a_{x_2} \right)  .
\een 
By definition of the cocycle $\omega$, the right-hand side is equal to
\ben
c \cdot \frac{1}{12} \int_{z}
\frac{1}{z-x_1} \partial_z^3(f_{x_2}(z,\zbar)) \d z \d \zbar .
\een 
Iterative application of integration by parts together with the fact that $\int
\varphi(z) f_{x_2}(z,\zbar) \d z \d \zbar = \varphi(x_2)$ yields
\ben
\left\lfloor a_{x_1} a_{x_2} \right\rfloor = \frac{c}{2} \frac{1}{(x_1
  - x_2)^4} .
\een 

We can compute $\lfloor a_{x_1} a_{x_2} a_{x_3}\rfloor$ in a similar
way. Since $\lfloor a_{x_i} \rfloor = 0$ the recursive formula implies
\be\label{3point}
\left\lfloor a_{x_1} a_{x_2} a_{x_3}\right\rfloor = \left \lfloor \left[
  \frac{1}{z-x_1} \partial_z, a_{x_2}(z,\zbar) \right] \cdot a_{x_3}
\right\rfloor - \left \lfloor \left[
  \frac{1}{z-x_1} \partial_z, a_{x_3}(z,\zbar) \right] \cdot a_{x_2} \right\rfloor .
\ee
Consider the first term above. We compute the Lie bracket
\ben
\left[\frac{1}{z-x_1} \partial_z, a_{x_2}\right] =
\frac{1}{z-x_1} \partial_z(f_{x_2}(z,\zbar)) \d \zbar \partial_z +
\frac{1}{(z-x_1)^2} f_{x_2}(z,\zbar) \d \zbar \partial_z .
\een 
Applying $\dbar^{-1}$ to this expression yields the vector field
\ben
\left(-\frac{1}{(z-x_2)^2(x_2-x_1)} +
  \frac{2}{(z-x_2)(x_2-x_1)^2}\right) \partial_z .
\een 
This calculation, combined with the fact that $\lfloor a \cdot b \rfloor = c
\omega(\dbar^{-1} a, b)$ yields 
\bestar
\left \lfloor \left[
  \frac{1}{z-x_1} \partial_z, a_{x_2}(z,\zbar) \right] \cdot a_{x_3}
\right\rfloor & = & - c
\omega\left(\frac{1}{(z-x_2)^2(x_2-x_1)} \partial_z, a_{x_3}(z,\zbar)
\right) + 2 c \omega\left(\frac{1}{(z-x_2)(x_2-x_1)^2}\partial_z,
  a_{x_3}(z,\zbar) \right) \\ & = & - \frac{c}{12} \int_z
\frac{1}{(z-x_2)^2(x_2-x_1)} \partial_z^3(f_{x_3}(z,\zbar)) \d z \d
\zbar + \frac{c}{6} \int_z
\frac{1}{(z-x_2)(x_2-x_1)^2} \partial_z^3(f_{x_3}(z,\zbar)) \d z
\d\zbar \\ & = & \frac{c}{(x_3-x_2)^4(x_2-x_1)}
\left(-\frac{2}{x_3-x_2} + \frac{1}{x_2-x_1} \right)
\eestar
The second term in (\ref{3point}) is obtained by sending $x_2
\leftrightarrow x_3$ in the above formula. In total, the sum is thus
\ben
\frac{c}{(x_3-x_2)^4(x_2-x-1)(x_3-x_1)} \left(-2
  \frac{x_3-x_2}{x_3-x_2} + \frac{x_3-x_1}{x_2-x_1} + 2
  \frac{x_2-x_1}{x_3-x_2} + \frac{x_2-x_1}{x_3-x_1}\right) .
\een
This simplifies to the following expression for the $3$-point correlator
\ben
\left\lfloor a_{x_1} a_{x_2} a_{x_3} \right\rfloor =
\frac{c}{(x_1-x_2)^2 (x_1-x_3)^2 (x_2-x_3)^2} .
\een 

For general $n$, the recursive formula implies that can write the $n$-point function
as
\bestar
\left\lfloor a_{x_1} \cdots a_{x_n} \right\rfloor & = & \sum_{j = 2}^n
\left(\frac{1}{x_j - x_1} \partial_{x_j} +
  \frac{1}{(x_j-x_1)^2}\right) \left\lfloor a_{x_2} \cdots a_{x_n} \right
\rfloor  \\ & + & \frac{c}{2} \sum_{j=2}^n (-1)^j \frac{1}{(x_j - x_1)^4} \left\lfloor a_{x_2} \cdots
  \Hat{a}_{x_j} \cdots a_{x_n} \right\rfloor .
\eestar 
This shows, in particular, that as a function on the space
${\rm Conf}_{n}(\bfC P^1)$ the correlation function is not only holomorphic, it
is rational. One can find this expression for the correlation
functions in the vertex algebra literature, see for instance Section 2 of
\cite{Zhu}. 

\section{Application: Virasoro symmetry for holomorphic factorization
  algebras}

\subsection{Example: the $\beta\gamma$ system}
\def\Obs{{\rm Obs}}
\def\sH{\mathscr{H}}
\def\Sym{{\rm Sym}}

In this section we'd like to explain an example of a family of holomorphic field
theories parametrized by an integer $n$ whose factorization algebra of observables has the structure of
a holomorphic factorization algebra that we denote $\Obs^q_n$ with a
Virasoro symmetry. We produce a map of
factorization algebras on $\bfC$ from the Virasoro factorization
algebra (at a certain central charge) to $\Obs^q_{n}$. As a corollary we show that we recover the usual Virasoro vector of
the $\beta\gamma$ vertex algebra. 

First we need to define the factorization algebra $\Obs^q_n$. First, we
define a precosheaf of dg Lie algebras. To a one-dimensional
Riemannian manifold $U$ we define the dg Lie algebra
\ben
\sH_n(U) := \Omega^{1,*}_c(U)^{\oplus n} \oplus
\Omega^{0,*}_c(U)^{\oplus n} \oplus \bfC [-1]
\een
with bracket given by
\ben 
[\varphi,\psi] := \sum_{i=1}^n \int_U \varphi_i \wedge \psi_i .
\een
Here we write each element in components as $\varphi = (\varphi_1,\ldots,\varphi_n) \in
\Omega^{*,*}_c(U)^{\oplus n}$ so that $\varphi_i \in
\Omega^{*,*}_c(U)$. 
The factorization algebra is obtained in a similar way to the envelope
of a local Lie algebra. To an open set $U$ we define $\Obs^{q}_n(U) :=
{\rm C}^{\rm Lie}_*(\sH_n(U))$. Explicitly
\be\label{qobs1}
\Obs^q_n(U) := \left(\Sym\left(\Omega^{1,*}_c(U)^{\oplus n}[1] \oplus
  \Omega^{0,*}_c(U)^{\oplus n}[1] \right) , \dbar + \Delta \right) 
\ee
where $\Delta$ is the Chevalley-Eilenberg differential coming from the
Lie bracket. We restrict ourselves to factorization algebras on the Riemann surface
$\bfC$. 

It is shown in \cite{CG1}, that $\Obs^q_1$ is a
holomorphic factorization algebra whose associated vertex algebra is
isomorphic, to the one-dimensional
$\beta\gamma$ vertex algebra. Similarly, one has the following

\begin{theorem}[\cite{CG1} Theorem 5.3.3.2] The vertex algebra
  $\VVert(\Obs^q_n)$ is isomorphic to the $n$-dimensional
  $\beta\gamma$ vertex algebra $V_n$. The vertex algebra $V_n$ has
  state space spanned by vectors
  $\{b_l^i, c_m^j\}$ where $l < 0$, $m \leq 0$ and $1 \leq i,j \leq
  n$ with vertex operators
\bestar
Y(b_{-1}^i, z) & = & \sum_{l < 0} b_m^i z^{-1-n} + \sum_{l \geq 0}
\frac{\partial}{\partial c_{-l}^i} z^{-1-n} \\
Y(c_{0}^j, z) & = & \sum_{m \leq 0} c^j_m z^{-m} - \sum_{m > 0}
\frac{\partial}{\partial b_{-m}^j} z^{-m} .
\eestar 
\end{theorem}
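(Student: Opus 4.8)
The plan is to run $\Obs^q_n$ through the same pipeline we used for $\sVir$: first verify the hypotheses of Theorem~\ref{fv} so that $\VVert(\Obs^q_n)$ is defined, then identify its underlying space and generating fields, and conclude via the reconstruction theorem (Theorem 2.3.11 of \cite{FBZ}). The two structural inputs are Serre's lemma, which controls the compactly supported Dolbeault cohomology of a disk, and the filtration of the Chevalley--Eilenberg complex by symmetric-power degree, whose associated graded involves only tensor powers of the Dolbeault complexes $\Omega^{1,*}_c$ and $\Omega^{0,*}_c$.

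First I would establish that $\Obs^q_n$ is holomorphically translation invariant and $S^1$-equivariant. Translation invariance of the defining data (the operator $\dbar$, the pairing $\int_U \varphi_i \wedge \psi_i$, and hence $\Delta$) is immediate, and the contraction operator $\eta = \partial/\partial(\dd\bar z)$ furnishes the degree $-1$ derivation with $\dd\eta = \partial_{\bar z}$ exactly as in the free case treated in \cite{CG1}. The $S^1$-action is the restriction of the rotation action on $\Omega^{*,*}_c(\bfC)$ and extends to $\mathcal{D}(S^1)$ by the same averaging formula used for $\sVir$. For the remaining conditions of Theorem~\ref{fv} (the quasi-isomorphism $\Obs^{q,(l)}_n(r)\to\Obs^{q,(l)}_n(r')$ for $r<r'$, vanishing for $l\gg 0$, and the finite-dimensional-colimit condition) I would pass to the symmetric-power spectral sequence, reducing each claim to the level of the associated graded, i.e. to tensor powers of $\Omega^{1,*}_c(D(0,r))^{\oplus n}$ and $\Omega^{0,*}_c(D(0,r))^{\oplus n}$. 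There Serre's lemma computes the $\dbar$-cohomology as $(\sO^{hol}(D(0,r)))^{\oplus n, \vee}$ and $(\Omega^1_{hol}(D(0,r)))^{\oplus n, \vee}$ respectively, each concentrated in a single degree; coordinatizing $\sO^{hol}(D) = \bfC[z]$ and $\Omega^1_{hol}(D) = \bfC[z]\,\dd z$ shows the weight spaces are finite dimensional, vanish in one direction, and are independent of $r$.

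This computation simultaneously identifies the state space $V = \bigoplus_l {\rm H}^*(\Obs^{q,(l)}_n(r))$ with $\Sym$ of these Serre duals. The dual basis of $(\sO^{hol})^{\oplus n, \vee}$ gives the modes $b^i_l$ and that of $(\Omega^1_{hol})^{\oplus n,\vee}$ gives the $c^j_m$, with the ranges $l < 0$ and $m \leq 0$ forced by the weight bookkeeping $(z^k)^\vee \mapsto -k$ used in the $\sVir$ verification. With $V$ in hand, the reconstruction theorem applies once we take the generating vectors to be $b^i_{-1}$ and $c^j_0$ (with vacuum $1 \in \Sym^0$ and translation $T = \partial_z$), compute their fields from the limiting structure map $\mu_{z,0}$, and check that they satisfy the field, vacuum, translation, locality, and spanning axioms.

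The crux, and the only place where this differs from the abelian $\sVir_0$ computation, is the explicit identification of the fields $Y(b^i_{-1},z)$ and $Y(c^j_0,z)$. The differential here is $\dbar + \Delta$, and it is precisely the second-order operator $\Delta$ coming from the symplectic pairing that produces the annihilation terms $\partial/\partial c^i_{-l}$ and $\partial/\partial b^j_{-m}$ in the stated formulas: the creation half of each field is read off from the classical placement of a mollified observable via Serre duality, while the annihilation half arises when $\Delta$ contracts the inserted $\beta$-observable against a $\gamma$-observable (or vice versa). The main obstacle is therefore to carry out the factorization product on an annulus-inside-a-disk configuration and track this contraction; this is the analog of the central-charge computation in Proposition~\ref{phi}, and as there it reduces to an explicit $\dbar^{-1}$ and integration-by-parts calculation on the disk, now yielding the Weyl-algebra relations of the $\beta\gamma$ system rather than a scalar central term.
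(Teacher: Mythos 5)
This statement is not proved in the paper at all: it is imported verbatim as an external citation (Theorem 5.3.3.2 of \cite{CG1}), so there is no in-paper argument to compare your proposal against. That said, your sketch reconstructs essentially the proof strategy of the cited source, which is also the strategy this paper runs for $\sVir$: verify the hypotheses of Theorem \ref{fv} (holomorphic translation invariance via the contraction $\eta$, the $S^1$-action, and the symmetric-power filtration reducing everything to Serre's computation of ${\rm H}^*(\Omega^{1,*}_c(D))\cong \sO^{hol}(D)^\vee$ and ${\rm H}^*(\Omega^{0,*}_c(D))\cong \Omega^1_{hol}(D)^\vee$), identify the state space as the resulting Fock space with modes $b^i_l$, $c^j_m$, and then apply the reconstruction theorem to the generating fields. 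You also correctly isolate the one genuinely new ingredient relative to the abelian $\sVir_0$ computation: the annihilation terms $\partial/\partial c^i_{-l}$ and $\partial/\partial b^j_{-m}$ in the vertex operators arise from the second-order BV-type operator $\Delta$ in the differential $\dbar+\Delta$ contracting paired observables, which is exactly the mechanism in \cite{CG1}. Your outline leaves the explicit cochain-level computation of $Y(b^i_{-1},z)$ and $Y(c^j_0,z)$ (and the locality and spanning checks) unexecuted, but as a proof plan it is sound and faithful to how the result is actually established; note also that the mode formulas as printed in the statement contain typos ($z^{-1-n}$ should read $z^{-1-l}$, and the summation indices $l$, $m$ are garbled), which your Serre-duality bookkeeping would silently correct.
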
 

\begin{prop}\label{virtobg} There is a map of factorization algebras on $\bfC$ 
\ben
\Phi : \sVir^{\bfC}_{c = n} \to \Obs^q_n
\een 
commuting with the $S^1$ action. This map quantizes the map of
factorization algebras
\ben
\Phi^{cl} : \sVir^{\bfC}_{c=0} \to \Obs^{cl}_n .
\een
\end{prop}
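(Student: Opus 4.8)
The plan is to obtain $\Phi$ from the quantum Noether map of Costello--Gwilliam \cite{CG1, CG2}: first exhibit $\scr{L}^\bfC$ as a local symmetry of the classical $\beta\gamma$ system, promote this to the classical map $\Phi^{cl}$ out of the untwisted envelope, and then show that the one-loop obstruction to quantizing the symmetry is exactly $n\,\omega$, so that the quantum symmetry is necessarily encoded by a map out of the twisted envelope $\sVir^{\bfC}_{c=n} = U^{\rm fact}_{n\omega}\scr{L}^\bfC$.

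First I would set up the classical symmetry. A holomorphic vector field $X = v\,\partial_z \in \scr{L}^\bfC(U)$ acts on the fields of the $\beta\gamma$ system by Lie derivative, with $\gamma \in \Omega^{0,*}$ treated as a weight-zero field and $\beta \in \Omega^{1,*}$ as a weight-one field,
\ben
X\cdot\gamma = v\,\partial_z\gamma, \qquad X\cdot\beta = v\,\partial_z\beta + (\partial_z v)\,\beta .
\een
On each of the $n$ copies this is a linear symplectic transformation preserving the pairing $\sum_i\int_U \beta_i\wedge\gamma_i$ defining the bracket on $\sH_n$, and since $\dbar$ commutes with $\mathcal{L}_X$ for holomorphic $X$ it preserves the free action $\int\beta\wedge\dbar\gamma$. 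Hence the symmetry is generated by a quadratic Noether current, i.e. an element of $\Obs^{cl}_n(U)$ built from $\Sym^2$ of the fields; assembling these currents compatibly over all $U$ gives the classical Noether map of factorization algebras $\Phi^{cl}\colon \sVir^{\bfC}_{c=0} = U^{\rm fact}\scr{L}^\bfC \to \Obs^{cl}_n$ of \cite{CG1}.

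Next I would quantize. The $\beta\gamma$ system is free, so a BV quantization exists and is fixed by the analytic propagator built from the $\dbar$-Green's function of the correlation-function section (the operator $\dbar^{-1}$, with kernel $\sim 1/(z-w)$). By the quantum Noether theorem of \cite{CG2}, the obstruction to lifting $\Phi^{cl}$ to a map landing in the quantum observables $\Obs^q_n$ is a degree-one local cocycle (the one-loop anomaly) $\theta \in {\rm H}^1_{\rm loc}(\scr{L}^\bfC)$, computed by the one-loop ``wheel'' Feynman diagrams whose vertices are the interaction terms above and whose internal edges are the $\beta\gamma$ propagator. By Proposition \ref{prop1} this space is one-dimensional and spanned by $\omega$, so $\theta = \kappa\,\omega$ for a single constant $\kappa$, and it remains only to compute $\kappa$.

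The main work, and the main obstacle, is this anomaly computation. The relevant contribution is the two-vertex wheel: two insertions of the vector-field current joined by two propagators, carrying a trace over the $n$ flavors. The trace produces the overall factor $n$, while the derivatives at the two vertices acting on the two propagator factors $\sim 1/(z-w)$ build up the fourth-order pole $1/(z-w)^4$ — equivalently, after integration by parts, the $\partial_z^3$ kernel of $\omega$ (consistent with the $c/2\,(x_1-x_2)^{-4}$ two-point function already computed). Matching the analytic normalization of this integral against the $\tfrac{1}{2\pi}\tfrac{1}{12}$ appearing in $\omega$ is the delicate point and is precisely what pins the central charge to $c = n$, giving $\theta = n\,\omega$. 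With the anomaly identified, the quantum symmetry is by construction a map out of $U^{\rm fact}_{n\omega}\scr{L}^\bfC = \sVir^{\bfC}_{c=n}$, producing $\Phi\colon \sVir^{\bfC}_{c=n}\to\Obs^q_n$ whose classical limit is $\Phi^{cl}$; and $S^1$-equivariance is automatic, since the action, the current, and the propagator are all assembled from rotation-covariant data.
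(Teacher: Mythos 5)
Your strategy coincides with the paper's: build the classical Noether map $\Phi^{cl}$ from the Lie-derivative action of $\sL^\bfC$ on the $\beta\gamma$ fields, then identify the one-loop obstruction to quantizing this symmetry with $n\omega$, so that the quantum map is forced to come out of the twisted envelope $U^{\rm fact}_{n\omega}\sL^\bfC = \sVir_{c=n}$. Your one genuine refinement is invoking Proposition \ref{prop1}: since ${\rm H}^1_{\rm loc}(\sL^\bfC)\cong\bfC$ is spanned by $\omega$, the anomaly must be $\kappa\,\omega$ up to coboundary, reducing the problem to a single scalar. The paper does not use this reduction; it computes the obstruction cocycle on the nose and observes that it literally equals the defining cocycle of $\sVir_{c=n}$.

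However, there is a genuine gap: the scalar $\kappa$ is never actually computed, and that computation is the substance of the paper's proof. Writing that ``matching the analytic normalization of this integral against the $\tfrac{1}{2\pi}\tfrac{1}{12}$ appearing in $\omega$ is the delicate point and is precisely what pins the central charge to $c=n$'' restates the claim to be proven; nothing in your argument rules out $\kappa = 2n$ or $-n$ or $0$. Moreover, your setup as stated cannot support the computation: you propose to fix the quantization by the bare propagator $\dbar^{-1}$ with kernel $\sim 1/(z-w)$, but the two-vertex wheel then requires multiplying this distribution against its own derivatives along the same diagonal, which is ill-defined --- the anomaly is precisely a regularization effect. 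The paper resolves this by working in the effective BV framework: it mollifies the BV Laplacian via the heat kernel $K_\epsilon$ of $e^{-\epsilon[\dbar,\dbar^*]}$, forms the scale-dependent propagator $P_\epsilon^L$, cites the result (\cite{WG2}, Corollary 16.0.5) that the obstruction $\alpha[L]$ is the weight of the wheel with one edge labeled $P_\epsilon^L$ and the other labeled $K_\epsilon$, and then evaluates the resulting Gaussian integrals explicitly; the coefficient $\tfrac{1}{12}$ (hence exactly $c=n$) emerges from the convergent limit $\lim_{\epsilon\to 0}\int_{t=\epsilon}^L \epsilon^2 t\,(\epsilon+t)^{-4}\,\d t = \tfrac{1}{12}$, with the flavor trace supplying the factor $n$. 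Without specifying a regularization scheme and carrying out this (or an equivalent) integral, the identification $\theta = n\omega$ --- and with it the central charge asserted in the proposition --- is unsupported.
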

In particular, the map of factorization algebras produces a map of vertex algebras
\ben
\VVert(\Phi) : \VVert(\sVir^{\bfC}_{c=n}) \to \VVert(\Obs^q_n)
.
\een
Concluding the proof of the proposition we will see explicitly that the map of vertex algebras produced by the
above proposition recovers the usual conformal vector of the
$\beta\gamma$ vertex algebra. 

\subsection{Proof of Proposition \label{virtobg}}

The proof is based on an explicit calculation in terms of Feynman
diagrams in a version of renormalization developed in
\cite{C1} and \cite{CG2}. 

To describe the map in Proposition \ref{virtobg} It is necessary to
describe the factorization algebra $\Obs^q_n$ in terms of an effective
family of factorization algebras and functionals as in \cite{CG2}. The
general formalism starts with a classical field theory defined by a
symplectic form of cohomological degree $-1$ and produces from an
effective quantization, as in \cite{C1}, a factorization algebra of
quantum observables. 

The fields of the theory are
\ben
\sE_n := \Omega^{0,*}(\bfC)^{\oplus n} \oplus
\Omega^{1,*}(\bfC)^{\oplus n}  .
\een
We write the fields as $(\gamma, \beta)$ (hence the name), and the
components as $\gamma = (\gamma_1,\ldots, \gamma_n)$, $\beta =
(\beta_1,\ldots,\beta_n)$. The symplectic pairing is
\ben
\<\gamma, \beta\> = \sum_{i=1}^n \int_\bfC \gamma_i \beta_i
\een 
which is easily seen to have cohomological degeee $-1$. With this
pairing, we can express $\sE_n$ as $\Omega^{0,*}(\bfC) \tensor V
\oplus \Omega^{1,*}(\bfC) \tensor V^*$ where $V$ is a complex
$n$-dimensional vector space. The pairing comes from the dual
pairing on $V$. 

The classical observables supported on $\Sigma$ are simply the space
of algebraic functions on the space of fields. Keeping track of the
right notion of duals, for any open $U \subset \bfC$ we define
\ben
\Bar{\Obs}^{cl}_n (\bfC)  = \Sym
\left(\Bar{\Omega}^{1,*}_c(\bfC)^{\oplus n} \oplus
  \Bar{\Omega}^{0,*}_c(\bfC)^{\oplus n} \right) .
\een 
One checks immediately that this construction defines a factorization
algebra on $\bfC$.

The classical action of holomorphic vector fields is a very natural
one. Given any element $\alpha \in \Omega^{*,*}(U)$ and any section
of the holomorphic tangent bundle $X \in \Gamma(U, T^{1,0}U)$ we
define
\ben
X \cdot \alpha = L_X \alpha
\een
where $L_X \alpha$ denotes the Lie derivative of $\alpha$ by $X$. This definition
naturally extends to elements $X \in \sL^\bfC (U) = \Omega^{0,*}(U,
T^{1,0}U)$. This action of $\sL^\bfC$ on forms leads to an action of the
factorization algebra $\Bar{\Obs}^{cl}_n$ as follows. For $X \in
\sL^\bfC$ define the holomorphically translation invariant local functional $I^{\sL}_X \in \oloc(\sE_n)$ by
\ben
I^{\sL}_X(\gamma,\beta) = \int \beta \wedge (X \cdot \gamma) .
\een 
Note that this local functional is of cohomological degree $-1$ and so
we have described a map
\ben
I^{\sL} : \sL^{\bfC} \to \oloc(\sE_n)[-1] .
\een
The space of local functionals shifted up by one $\oloc(\sE_n)[-1]$ is
itself a dg Lie algebra with Lie bracket given by the Poisson bracket
$\{-,-\}$ induced from the pairing $\<-,-\>$. It is immediate to check
that $I^{\sL}$ is compatible with this bracket and hence defines a map
of local Lie algebras on $\bfC$. This implies that $I^{\sL}$
determines a Maurer-Cartan element of ${\rm C}_{\rm loc}^*(\sL^\bfC ;
\oloc(\sE_n))[-1])$ which we think of as encoding the action of
holomorphic vector fields on the classical field theory. 

A translation invariant local functional determines a classical
observable supported on any open set in $\bfC$. For each $U \subset
\bfC$ we then extend $I^{\sL}$ to a map of commutative dg
algebras $\Phi^{cl}(U): \Sym(\sL_c(U)[1]) \to
\Bar{\Obs}^{cl}(U)$. These combine to give a map of factorization
algebras
\ben
\Phi^{cl} : \sVir_{0} \to \Bar{\Obs}^{cl} .
\een 

The naive BV-Laplacian $\Delta_0$ defined by contraction with the
integral kernel $K_0$ of the symplectic form $\<-,-\>$ is ill-defined on
$\Bar{\Obs}^{cl}_n$ as it involves pairing distributional
sections. This was solved in the above description by working with a
smaller class of observabels: one defines
\ben
\Obs^{cl}_n (\bfC) \subset \Bar{\Obs}_n^{cl}(\bfC)
\een
to be the subspace of non-distributional sections of the appropriate
vector bundles. Then, on $\Obs^{cl}_n$ the operator $\Delta_0$ {\em is}
well-defined. Note that when we equip $\Obs^{cl}_n(\bfC)$ with the differential $\dbar + \Delta_0$ we obtain $\Obs^q_n$ as defined in (\ref{qobs1}). It is
immediate that the factorization structures coincide. 

There is another solution that is necessary to describe the map in
Proposition \ref{virtobg} that involves mollifying the operator
$\Delta_0$ to a family of operators $\Delta_L$ for each $L > 0$. This
approach is outlined in wide generality in Chapter 9 of \cite{CG2}. In
this example there is an obvious choice on how to mollify
$\Delta_0$. Let $\dbar^*$ be the Hodge dual operator to $\dbar$ with
respect to the Euclidean metric on $\bfC$. Then the commutator $[\dbar,
\dbar^*]$ is the Hodge laplacian. We let $K_{L,n}$ be the integral kernel
for the operator $e^{-L [\dbar,\dbar^*]}$. It is the unique graded
symmetric
element $K_{L,n} \in \sE_n \tensor \sE_n$, for $L > 0$, satisfying
\ben
\<K_{L,n}(z,w), \varphi(w)\>_w = (e^{-L [\dbar,\dbar^*]} \varphi)(z)
\een
for all $\varphi \in \sE_n$. Explicitly, one has $K_{L,n} = K_L
\tensor \left({\rm Id}_{V} + {\rm
    Id}_{V^*}\right)$ where 
\ben
K_{L} (z,w) = \frac{1}{4 \pi L} e^{- |z-w|^2/4 L} (\d \zbar \tensor 1 -
1 \tensor \d \Bar{w}) .
\een 
The mollified BV Laplacian is the operator $\Delta_L$ defined by
contraction with $K_L$. Note that this operator is well-defined on
$\Obs^{cl}_n$. The propagator of the theory is
$P_\epsilon^L(z,w) \tensor \left({\rm Id}_{V} + {\rm
    Id}_{V^*}\right)$ where 
\ben
P_\epsilon^L (z,w) = \int_{t = \epsilon}^L \frac{1}{16 \pi t}
e^{-|z-w|/4t} \d t .
\een

The space of global quantum observables at scale $L$ is the complex
\ben
\Bar{\Obs}^q_n(\bfC)[L] := \left(\Sym
\left(\Bar{\Omega}^{1,*}_c(\bfC)^{\oplus n} \oplus
  \Bar{\Omega}^{0,*}_c(\bfC)^{\oplus n} \right), \dbar +
\Delta_L \right) .
\een 
To get a factorization algebra structure we need to provide the space
of quantum observables supported on an arbitrary open $U \subset
\bfC$. This is more subtle than in the classical case since the
operator $\Delta_L$ has support everywhere on the complex line. In
fact, to have a reasonable definition we need to consider the BV
Laplcian for a more general class of parameterices. This is developed
fully in Chapter 8 of \cite{CG2}. We will not provide details here, as
the exact definition of the factorization structure will not be
used. The main result we will need is the following. 

\begin{prop} There is a quasi-isomorphism of factorization algebras on $\bfC$
\ben 
\Obs^{q}_n \xrightarrow{\simeq} \Bar{\Obs}^q_n
\een
where on the right hand side we use the effective BV quantization
provided by the regularized BV operator. 
\end{prop}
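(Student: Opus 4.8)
The plan is to realize the comparison map as the homotopy renormalization group (RG) flow operator and to verify that it is a cochain isomorphism of the underlying complexes, isolating the factorization-structure compatibility as the genuinely delicate point. Recall that the propagator $P_\epsilon^L$ is manufactured from the heat kernel $K_t$ by $P_\epsilon^L = \int_{t=\epsilon}^L (\dbar^* \tensor 1) K_t \,\d t$ and satisfies the homotopy identity $\dbar P_\epsilon^L = K_\epsilon - K_L$. Writing $\partial_{P_\epsilon^L}$ for the order-two operator given by contraction against the propagator kernel (it lowers symmetric degree by two), this identity becomes the commutator relation $[\dbar, \partial_{P_\epsilon^L}] = \Delta_\epsilon - \Delta_L$, where $\Delta_t$ denotes contraction with $K_t$. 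Since $\Delta_L$ and $\partial_{P_\epsilon^L}$ are both constant-coefficient contractions they commute, so the operator $W(P_\epsilon^L,-) = \exp(\partial_{P_\epsilon^L})$ conjugates the scale-$L$ differential to the scale-$\epsilon$ differential:
\ben
W(P_\epsilon^L,-)^{-1} \circ (\dbar + \Delta_L) \circ W(P_\epsilon^L,-) = \dbar + \Delta_\epsilon .
\een

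First I would take the limit $\epsilon \to 0$. On the smaller complex $\Obs^{cl}_n$ of non-distributional observables the operator $\Delta_0$ is well-defined, as emphasized above, so $(\Obs^{cl}_n, \dbar + \Delta_0) = \Obs^q_n$. Although the kernel $P_0^L$ acquires a short-distance singularity as $\epsilon \to 0$, the singularity is integrable, so contraction against smooth compactly supported inputs converges and $W(P_0^L,-) = \exp(\partial_{P_0^L})$ is a well-defined operator from $\Obs^q_n$ into the scale-$L$ complex $\Bar{\Obs}^q_n[L]$; note that on each element of fixed symmetric degree the exponential is a finite sum, so there is no convergence issue for the series itself. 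Passing the conjugation identity to $\epsilon = 0$ shows that $W(P_0^L,-)$ intertwines $\dbar + \Delta_0$ with $\dbar + \Delta_L$, that is, it is a cochain map $\Obs^q_n \to \Bar{\Obs}^q_n[L]$; up to the RG isomorphisms relating different scales this is independent of $L$, and it is the candidate for the asserted quasi-isomorphism.

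To see that $W(P_0^L,-)$ is a quasi-isomorphism I would filter both complexes by symmetric powers, $F_m = \Sym^{\leq m}$. The operator $\dbar$ preserves symmetric degree while $\Delta_0$, $\Delta_L$ and $\partial_{P_0^L}$ all lower it by two, so the filtration is respected on both sides and the induced differential on the associated graded is simply $\dbar$. Since $\partial_{P_0^L}$ strictly decreases symmetric degree, $W(P_0^L,-) = \id + (\text{terms lowering degree})$, whence $\mathrm{gr}\,W(P_0^L,-) = \id$. The map is therefore a quasi-isomorphism on associated graded complexes, and a standard spectral sequence comparison—using that the filtration is bounded below by $\Sym^0$ and exhaustive—promotes this to a quasi-isomorphism of the filtered complexes themselves.

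The hard part will not be this quasi-isomorphism of global complexes on $\bfC$ but the compatibility of $W(P_0^L,-)$ with the factorization structure, i.e. checking that the operators assemble into a map of factorization algebras rather than a mere cochain map of global sections. The obstruction is exactly the nonlocal support of $\Delta_L$ flagged above: to even define $\Bar{\Obs}^q_n(U)$ on a general open $U \subset \bfC$ one must pass to the larger system of parametrices and the associated family of mollified BV operators developed in Chapter 8 of \cite{CG2}, and show that the RG flow maps between different parametrices are homotopic and compatible with the prefactorization structure maps induced from disjoint inclusions. I would invoke that formalism to verify that the collection $\{W(P_0^L,-)\}$ is natural in $U$ and commutes with the structure maps $\tensor_i \Obs^q_n(U_i) \to \Obs^q_n(V)$, and finally that both sides satisfy Weiss descent, so that the statement ``quasi-isomorphism of factorization algebras'' may be checked locally on a Weiss basis, where the preceding filtration argument applies verbatim.
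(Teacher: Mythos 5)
Your proposal cannot be compared against ``the paper's own proof'' because the paper does not give one: the proposition is stated as an imported result, with the construction of $\Bar{\Obs}^q_n(U)$ for general opens (via the parametrix formalism) and the comparison both deferred to Chapter 8 of \cite{CG2}, and with the explicit remark that the details will not be used. What you have written is, in effect, a reconstruction of the argument that lives in Costello--Gwilliam, and it is essentially correct. The key identity $[\dbar, \partial_{P_\epsilon^L}] = \Delta_\epsilon - \Delta_L$ (from $\dbar P_\epsilon^L = K_\epsilon - K_L$), together with $[\partial_{P_\epsilon^L}, \Delta_L] = 0$, does give the conjugation statement, since the double commutator $[[\dbar,\partial_{P_\epsilon^L}],\partial_{P_\epsilon^L}]$ vanishes and the Baker--Campbell--Hausdorff series truncates; the $\epsilon \to 0$ limit is legitimate exactly because $P_0^L$ is locally integrable in complex dimension one and the inputs in $\Obs^{q}_n$ are smooth with compact support; and the degreewise finiteness of $\exp(\partial_{P_0^L})$ is correctly observed. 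One simplification you missed: since $\exp(\partial_{P_0^L})$ has the evident inverse $\exp(-\partial_{P_0^L})$, which is well defined by the same integrability argument, your map is an \emph{isomorphism} of cochain complexes on global sections, not merely a quasi-isomorphism, so the filtration-by-$\Sym^{\leq m}$ spectral sequence comparison is unnecessary (though not wrong). Your final paragraph correctly identifies the only genuinely nontrivial point --- that the scale-$L$ operators assemble into a map of factorization algebras, which requires the parametrix definition of $\Bar{\Obs}^q_n(U)$ and compatibility of RG flow with the structure maps --- and defers it to \cite{CG2}; this is precisely the same deferral the paper itself makes, so no more can reasonably be demanded of a proof written at the paper's level of detail.
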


We are given a Maurer-Cartan element $I^{\sL}$ that encodes the action
of holomorphic vector fields on the classical factorization
algebra. Since the field theory underlying the factorization algebra
is free, the action lifts to an action of a shifted central extension of
holomorphic vector fields. This implies that we have a map of
factorization algebras
\be\label{noether}
\Phi : U_{\alpha} \sL \to \Bar{\Obs}^q_n
\ee
for some cocycle $\alpha \in {\rm C}_{loc}^*(\sL)$ parameterizing the
shifted central extension. In the language of effective quantization
of BV theories the cocycle $\alpha$ is the $L \to 0$ limit of the
obstruction $\alpha[L]$ of the one-loop quantum interaction
\ben
I^{\sL}[L] = \sum_{\substack{\Gamma \in \text{\rm Graphs of genus}
    \leq 1}} \frac{1}{|{\rm
    Aut}(\Gamma)|} W_\Gamma(P_0^L \tensor \left({\rm Id}_{V} + {\rm
    Id}_{V^*}\right) ; I^{\sL})
\een  
 to satisfy the quantum master equation:
\ben
\dbar I^{\sL}[L] \d_{\sL} I^{\sL}[L] + \frac{1}{2} \{I^{\sL}[L], I^{\sL}[L]\}_L +
\hbar \Delta_L I^{\sL}[L] =  \alpha [L] .
\een 
Here, $I^{\sL}[L] \in {\rm C}_{\rm Lie}^*(\sL ; \scr{O}(\sE_n))$ is
defined by homotopy RG-flow using the weight expansion in terms of
connected graphs of genus less than or equal to one. There is a subtle
point, the propogator $P_0^L$ is distributional by nature so a priori
the expression for $I^{\sL}[L]$ may not exist. The fact that
$I^{\sL}[L]$ is well-defined is a hallmark of holomorphic theories
having no counter terms when one uses the so-called ``chiral gauge''. 

With a calculation similar to that of Corollary 16.0.5 in \cite{WG2}
we have the following description of the effective obstruction cocycle
$\alpha[L]$. 

\begin{lemma}(\cite{WG2} Corollary 16.0.5))The obstruction $\alpha[L]$ is computed by the weight
\ben
\lim_{\epsilon \to 0} W_\Gamma (P_\epsilon^L \tensor \left({\rm Id}_{V} + {\rm
    Id}_{V^*}\right) , K_{\epsilon}\tensor \left({\rm Id}_{V} + {\rm
    Id}_{V^*}\right) ; I^{\sL})
\een
where $\Gamma$ is the one-loop connected wheel with two vertices. We
attach the propagator $P_\epsilon^L$ to one inner edge and $K_L$ to
the other inner edge.
\end{lemma}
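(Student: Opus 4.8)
The plan is to follow Costello's general analysis of the one-loop obstruction to the quantum master equation for a free BV theory coupled to a background local Lie algebra, specialized to the coupling $I^{\sL}$ of holomorphic vector fields to the $\beta\gamma$ system. First I would record that, because the propagating theory is free, the homotopy RG flow $I^{\sL}[L]$ is computed by a sum over connected graphs of genus at most one whose vertices are the trivalent vertex $I^{\sL}$, which carries one $\sL$-leg, one $\gamma$-leg, and one $\beta$-leg. The genus-zero (tree) part merely reproduces $I^{\sL}$ and satisfies the classical master equation, so the obstruction $\alpha[L]$ defined by the quantum master equation displayed above is concentrated in genus one, that is, on the \emph{wheels}. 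Each such wheel is a connected graph in which the field legs are paired cyclically by the $\beta\gamma$ propagator, leaving the $\sL$-legs external.

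The next step is to isolate which wheel contributes. By the general theory $\alpha[L]$ is a local functional of $\sL$ and defines a class in ${\rm H}^1_{\rm loc}(\sL^\bfC)$; by Proposition \ref{prop1} this group is one-dimensional, spanned by the quadratic cocycle $\omega$. A wheel with $k$ vertices produces a local functional of polynomial degree $k$ in $\sL$, since each vertex $I^{\sL}$ contributes exactly one $\sL$-leg. The one-vertex wheel is a tadpole contracting the $\gamma$- and $\beta$-leg of a single vertex, which vanishes; and the wheels with three or more vertices give functionals of higher polynomial degree in $\sL$ which, after the $\epsilon \to 0$ limit, either vanish or are exact and so cannot alter the class in the one-dimensional group ${\rm H}^1_{\rm loc}(\sL^\bfC)$. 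Thus the surviving contribution is the connected wheel $\Gamma$ with exactly two vertices. Because the $\beta\gamma$ pairing contracts a $\gamma$-leg against a $\beta$-leg, the two internal edges of $\Gamma$ each join the $\gamma$ of one vertex to the $\beta$ of the other, which is precisely the graph in the statement.

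It remains to identify the decorations of the two internal edges. Following the scale-$L$ prescription for the anomaly, I would compute $\alpha[L]$ as the local ($\epsilon \to 0$) part of the RG flow of $\Delta_\epsilon I^{\sL}[\epsilon]$. The mollified BV Laplacian $\Delta_\epsilon$ contracts two legs against its kernel $K_\epsilon$, creating one internal edge of $\Gamma$ decorated by $K_\epsilon$, while the remaining internal edge that closes the loop is decorated by the propagator $P_\epsilon^L$. Assembling the weight of $\Gamma$ with these two decorations, and absorbing the factor $1/|{\rm Aut}(\Gamma)|$ into the weight, gives
\ben
\alpha[L] = \lim_{\epsilon \to 0} W_\Gamma\!\left(P_\epsilon^L \tensor ({\rm Id}_V + {\rm Id}_{V^*}),\; K_\epsilon \tensor ({\rm Id}_V + {\rm Id}_{V^*}); I^{\sL}\right) .
\een
For the precise bookkeeping of signs and symmetry factors I would appeal to the computation of Corollary 16.0.5 in \cite{WG2}, which the present situation mirrors.

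The main obstacle is analytic rather than combinatorial: one must show that the $\epsilon \to 0$ limit exists, i.e. that no counterterms are needed. This is the hallmark of holomorphic theories in chiral gauge. Concretely, the propagator $P_\epsilon^L$ and the kernel $K_\epsilon$ are built from the holomorphic heat kernel $K_L(z,w) = \frac{1}{4\pi L} e^{-|z-w|^2/4L}(\d\zbar \tensor 1 - 1 \tensor \d\Bar{w})$, and the antiholomorphic one-forms it carries, together with the holomorphic derivatives produced by the two $I^{\sL}$-vertices, render the integral of the weight of $\Gamma$ over the relevant configuration space absolutely convergent uniformly in $\epsilon$. Verifying this convergence, and confirming the parallel vanishing claims for the tadpole and the higher wheels, is the crux of the argument and is exactly where the one-dimensionality of ${\rm H}^1_{\rm loc}(\sL^\bfC)$ from Proposition \ref{prop1} does the essential work.
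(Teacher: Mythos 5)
Your proposal has the right combinatorial skeleton (trivalent vertices $\beta$--$\gamma$--$\sL$, reduction of the obstruction to one-loop wheels, the two decorations $P_\epsilon^L$ and $K_\epsilon$ on the inner edges), but it has a genuine gap exactly at the step the lemma is really about. The lemma asserts an \emph{equality}: $\alpha[L]$ \emph{is} the $\epsilon \to 0$ limit of the weight of the two-vertex wheel, and the paper immediately uses this equality to compute the cocycle explicitly and read off the charge $c=n$. The actual content behind this (which is what \cite{WG2}, Corollary 16.0.5 supplies, and which the paper itself does not reprove but simply cites) is the analytic fact that in one complex dimension, in the chiral gauge, the weights of the tadpole and of all wheels with three or more vertices vanish \emph{identically} in the $\epsilon \to 0$ limit. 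You replace this with the assertion that higher wheels ``either vanish or are exact and so cannot alter the class'' in ${\rm H}^1_{\rm loc}(\sL^\bfC)$. Even if that were established, it would only show that $\alpha[L]$ is \emph{cohomologous} to the two-vertex wheel weight, which is strictly weaker than the stated equality; and you concede at the end that the convergence and vanishing claims --- ``the crux of the argument'' --- are left unverified.

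The cohomological substitute also rests on two unjustified premises. First, you assume $\alpha[L]$ is a local functional at finite $L$ and so defines a class in ${\rm H}^1_{\rm loc}(\sL^\bfC)$; in general only the $L \to 0$ limit of the obstruction is local, and in this paper the locality and $L$-independence of $\alpha[L]$ are deduced \emph{a posteriori} from the explicit wheel computation, so they cannot be invoked a priori. Second, polynomial degree in $\sL$ is only a filtration on the local Chevalley--Eilenberg complex --- the CE differential raises ${\rm Sym}$-degree by one --- so the one-dimensionality of ${\rm H}^1_{\rm loc}(\sL^\bfC)$ from Proposition \ref{prop1} does not by itself allow you to discard the degree $\geq 3$ components of a cocycle as exact, nor to conclude anything about them component by component. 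Since the paper's own ``proof'' is the citation to \cite{WG2}, deferring the vanishing of the tadpole and the higher wheels to that reference would have been perfectly acceptable; what is not acceptable is to replace that analytic input with a class-level argument that cannot yield the equality of functionals the lemma asserts and the subsequent computation requires.
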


With this lemma in hand we directly compute $\alpha[L]$. For $X =
f(z,\zbar) \partial_z$ and $g(z,\zbar) \d\zbar \partial_z$ in $\sL_c(\bfC)$
we have
\ben
\alpha[L] (f \partial_z, g \d \zbar \partial_z) = n \lim_{\epsilon \to
  0} \int_{\bfC_z
  \times \bfC_w} f(z,\zbar) \left(\partial_z P_\epsilon^L(z,w)\right)
g(z,\zbar) \d \zbar \left(\partial_w K_\epsilon(z,w)\right) .
\een 
The factor $n$ comes from the contraction of the tensors depending on
$V$. Next, we compute
\ben
\partial_{w} K_\epsilon (z,w) = \frac{1}{4 \pi \epsilon} \frac{\zbar
  - \Bar{w}}{4 \epsilon} e^{-|\zbar - \Bar{w}|^2 / 4\epsilon} .
\een
Similarly, 
\ben
\partial_z P_\epsilon^L (z,w) = \int_{t = \epsilon}^L \frac{1}{4 \pi
  t} \frac{(\zbar - \Bar{w})^2}{(4 t)^2} e^{-|\zbar - \Bar{w}|^2 /
  4t} \d t .
\een
After making the change of coordinates $(y = z - w, w)$, and plugging
in the expressions above we obtain an expression for the integral
inside the $\epsilon \to 0$ limit
\be\label{int1}
\frac{1}{16 \pi^2} \int_{\bfC_y \times \bfC_w} fg \d^2 y \d^2 w \int_{t = \epsilon}^L 
\frac{1}{\epsilon t} \frac{1}{(4 \epsilon)(4 t)^2} \Bar{y}^3 \exp
\left( -\frac{1}{4} \left(\frac{1}{t} + \frac{1}{\epsilon}\right)
  |y|^2 \right) .
\ee
If $\varphi$ is any compactly supported function then integration by
parts yields the relation
\ben
\int_y \varphi(y) \Bar{y}^k e^{- a |y|^2} \d^2 y = \frac{1}{a^k}
\int_{y} \left(\partial_y^3 \varphi\right)(y) e^{-a |y|^2} \d^2 y .
\een 
Applying this to the integral in (\ref{int1}) we obtain
\ben
\alpha[L] (f \partial_z, g \d \Bar{w} \partial_w) = n \lim_{\epsilon
  \to 0} \frac{1}{16 \pi^2}
\int_{\bfC_y \times \bfC_w} \partial^3_y(f g) (y,w) \int_{t = \epsilon}^L \frac{\epsilon}{(\epsilon + t)^3}
\exp \left( -\frac{1}{4} \left(\frac{1}{t} + \frac{1}{\epsilon}\right)
\right) .
\een 
Finally, performing integration in the $y$-direction using Wick's
formula, the right-hand side becomes
\ben
n \frac{1}{2 \pi} \left(\int_{\bfC_w} \partial_w^3 f g \d^2 w\right)
\lim_{\epsilon \to 0} \int_{t=\epsilon}^L
\frac{\epsilon^2 t}{(\epsilon+t)^4} \d t  .
\een 
The $t$-integral converges and in the $\epsilon \to 0$ limit 
\ben
\int_{t=\epsilon}^L
\frac{\epsilon^2 t}{(\epsilon+t)^4} \xrightarrow{\epsilon \to 0}
\frac{1}{12} .
\een 
Note that there is no longer a dependence on the $L > 0$
parameter. This means that for any $L$ the functional $\alpha = \alpha[L]$ is
already a local functional representing the shifted central extension. In conclusion we have calculated
\ben
\alpha (f \partial_z, g \d \zbar \partial_z) = \frac{1}{2 \pi}
\frac{n}{12} \int_{\bfC_z} \partial_z^3 f g \d^2 z .
\een 
This is precisely the defining cocycle for the Virasoro
factorization algebra of charge $c = n$. In conclusion we see that the map of
factorization algebras (\ref{noether}) becomes
\ben
\Phi : \sVir_{c = n} \to \Obs^q_n
\een 
 as desired.

\bibliographystyle{alpha}
\bibliography{vir}

\end{document}